\documentclass[11pt]{amsart}

\usepackage{accents}
\usepackage{amsmath}
\usepackage{amsfonts}
\usepackage{amssymb}
\usepackage{amsthm}
\usepackage{array,booktabs,multirow}
\usepackage{cite}
\usepackage{dsfont}
\usepackage[shortlabels]{enumitem}
\usepackage{etoolbox}
\usepackage[hang, flushmargin]{footmisc}
\usepackage{float}
\usepackage{hyperref}
\usepackage{latexsym}
\usepackage{needspace}
\usepackage{tikz}
\usetikzlibrary{matrix,arrows}

\theoremstyle{plain}
\newtheorem{thm}{Theorem}[section]
\newtheorem{cor}[thm]{Corollary}
\newtheorem{lem}[thm]{Lemma}
\newtheorem{prop}[thm]{Proposition}

\theoremstyle{definition}

\newtheorem{defn}[thm]{Definition}
\newtheorem{exam}[thm]{Example}

\AtBeginEnvironment{lem}{\Needspace{4\baselineskip}}
\AtBeginEnvironment{thm}{\Needspace{6\baselineskip}}
\AtBeginEnvironment{prop}{\Needspace{6\baselineskip}}
\AtBeginEnvironment{cor}{\Needspace{4\baselineskip}}
\AtBeginEnvironment{algo}{\Needspace{6\baselineskip}}

\typeout{Substyle for letter-sized documents. Released 24 July 1992}


\setlength{\topmargin}{-1in}
\setlength{\headheight}{1.5cm}
\setlength{\headsep}{0.3cm}
\setlength{\textheight}{9in}
\setlength{\oddsidemargin}{0cm}
\setlength{\evensidemargin}{0cm}
\setlength{\textwidth}{6.5in}

\newcommand{\beast}{\begin{eqnarray*}}
\newcommand{\eeast}{\end{eqnarray*}}

\newcommand{\C}{\mathbf C}
\newcommand{\D}{\mathbf D}

\newcommand{\R}{\mathcal R}
\newcommand{\dom}{{\rm dom}\,}
\newcommand{\cod}{{\rm cod}\,}
\newcommand{\Aut}{{\rm Aut}}
\newcommand{\Hom}{{\rm Hom}}


\title[A unified approach to the Galois closure problem]{A unified approach to the Galois closure problem}

\author{Hau-Wen Huang}
\address{
Department of Mathematics\\
National Central University\\
Chung-Li 32001 Taiwan
}
\email{hauwenh@math.ncu.edu.tw}
\thanks{This research started when both authors were working at the National Center for Theoretical Sciences in Hsinchu, Taiwan. The research of the first author is partially supported by the National Center for Theoretical Sciences in Taiwan, the Council for Higher Education of Israel and the Ministry of Science and Technology of Taiwan under the project MOST 105-2115-M-008-013.}

\author{Wen-Ching Winnie Li}
\address{Department of Mathematics, Pennsylvania State University,
University Park, PA 16802, USA} \email{wli@math.psu.edu}

\thanks{
The research of the second author is partially supported by the NSF grant DMS-1101368 and the Simons Foundation grant \# 355798. Part of the research was done when the author was visiting the National Tsing Hua University in summer 2015 and the Institute of Mathematics, Academia Sinica in spring and summer 2016. She would like to thank NCTS, the Mathematics Department of NTHU, and the Institute of Mathematics, Academia Sinica for their hospitality.}

\begin{document}

\begin{abstract}
In this paper we give a unified approach in categorical setting to the problem of finding the Galois closure of a finite cover, which includes as special cases the familiar finite separable field  extensions, finite unramified covers of a connected undirected graph, finite covering spaces of a locally connected topological space, finite \'etale covers of a smooth projective irreducible algebraic variety, and finite covers of normal varieties. We present two algorithms whose outputs are shown to be desired Galois closures. An upper bound of the degree of the Galois closure under each algorithm is also obtained.
\end{abstract}

\maketitle

{\footnotesize{\bf Keywords:} Galois closures, iterative algorithms, divide-and-conquer algorithms.}

{\footnotesize {\bf 2010 Mathematics Subject Classification:} 11R32, 68W05}.

\section{Introduction}
The Galois closure of a separable field extension  $F/E$ is a minimal Galois extension over $E$ containing $F$. It is unique up to isomorphism over $E$.  When $F=E(\alpha)$ is a finite simple extension, its Galois closure is the splitting field of the minimal polynomial $f(x)$ of $\alpha$ over $E$. The most common approach to construct the splitting field $K$ of $f(x)$ is the following iterative method: Initially set $K=F$ and $n=1$. Begin with (1).

\setlist[enumerate,1]{leftmargin=2.7em}
\begin{enumerate}
\item[(1)] If $n\geq \deg f(x)-1$, then $K/E$ is the Galois closure of $F/E$. Else go to (2).

\item[(2)] Find all monic irreducible factors $p_1(x), p_2(x), \ldots, p_m(x)$ of $f(x)$ in $K[x]$  and go to (3).

\item[(3)] If $\deg p_i(x)=1$ for all $1\leq i\leq m$, then $K/E$ is the Galois closure of $F/E$. Else go to (4).

\item[(4)] Choose an irreducible factor $p_i(x)$ with $\deg p_i(x)>1$ and go to (5).

\item[(5)] Reset $K$ to be $K[x]/(p_i(x))$ and increase $n$ by one. Go back to (1).
\end{enumerate}

Given a finite cover, such as an unramified cover of a connected undirected graph, or a finite covering space of a locally connected topological space, or a finite \'etale cover of a smooth projective irreducible algebraic variety, it is natural to ask the similar question of finding an explicit algorithm which outputs the Galois closure of a given cover.
In this paper, we give a unified approach to the Galois closure problem, including the aforementioned covers, by employing the language of category theory to formulate conditions (G1)--(G4) under which an iterative algorithm,  Algorithm $\mathds I$, is shown to output a Galois closure.  The same holds for another divide-and-conquer algorithm, Algorithm $\mathds R$,
under the additional hypothesis (G5). The conditions (G1)--(G5) are satisfied by the familiar covers discussed above.

This paper is organized as follows. The conditions (G1)--(G4) in categorical setting which encompass all special cases of interest are formulated in \S\ref{sec:category} and the condition (G5) is stated in \S\ref{sec:algo}. In \S\ref{sec:galois&homset} we study covers and characterize a sufficient condition for Galois closures.
The Algorithms $\mathds I$ and $\mathds R$ are introduced in \S\ref{sec:algo}, where we show that the outputs of these two algorithms are indeed Galois closures of the input cover $f$. The Galois closures of $f$ are shown to be unique up to isomorphism. As consequences, the degree of the Galois closures of $f$ is less than or equal to $(\deg f)!$; if the additional condition (G5) holds,  then the degree of the Galois closures of $f$ is shown to be a factor of $(\deg f)!$. In arithmetic geometry, finite covers between irreducible varieties defined over a field play a very important role. In particular, they are closely tied to the celebrated Inverse Galois Problem, which concerns realizing a finite group as the Galois group of a prescribed base field. Thus it would be desirable to have a categorical approach to finding the Galois closure of a given finite cover allowing ramification. We address  this question in the final section, where it is explained that the two algorithms in \S\ref{sec:algo} can be used to obtain Galois closures of finite covers, with or without ramifications, of normal varieties.

\section{A unified categorical setting}\label{sec:category}

In this paper we assume familiarity with basic categorical notions and terminologies such as categories, objects, arrows, epis, monos, pushouts, and pullbacks. The reader is referred to \cite[Chapters 1 and 3]{Lane:98} for more detail. Given two arrows $f$, $g$ in a category with the same codomain, namely $\cod f = \cod g$, denote by
$$
\Hom(g,f)
$$
the collection of all arrows $h$ such that $g=f\circ h$.
For notational
convenience, we adopt the above notation instead of the hom-sets in a comma category as in \cite[Chapter 2, \S6]{Lane:98}.

Throughout this paper we consider a category $\C$ which contains a full subcategory $\D$ satisfying the properties (G1)--(G4) below.

\setlist[enumerate,1]{leftmargin=2.7em}
\begin{enumerate}

\item[{\rm (G1)}] Any diagram $B\rightarrow A\leftarrow C$ in $\D$ has a pullback in $\C$.

\item[{\rm (G2)}] The category $\D$ satisfies the following properties:
\begin{enumerate}
\item[{\rm (I)}] The pushouts exist.

\item[{\rm (II)}] Each arrow is epic.

\item[{\rm (III)}] If an arrow is monic, then it is an isomorphism, that is, its inverse is an arrow of $\D$.
\end{enumerate}
\end{enumerate}

\noindent

\begin{enumerate}
\item[{\rm (G3)}]  For each object $U$ of $\C$, there exists a set $\Sigma(U)$ of arrows $i$ of $\C$ with $\dom i$ in $\D$ and $\cod i=U$ satisfying the following property: For any arrow $u$ with $\dom u$ in $\D$ and $\cod u=U$ there exists a unique $i\in \Sigma(U)$ with $\Hom(u,i)\not=\emptyset$; in other words, any such $u$ factors through a unique $i$ in $\Sigma(U)$.
\end{enumerate}

\begin{enumerate}
\item[{\rm (G4)}]
There is a degree function``$\deg$'' from the collection $\R$ of arrows of $\C$ with codomains in $\D$ into the set of positive integers with the following properties:

\begin{enumerate}
\item[{\rm (I)}] $\deg (g\circ f)=\deg g\cdot \deg f$
for any arrows $f$, $g$ and $g\circ f$ in $\R$.

\item[{\rm (II)}]
$
\deg f=\sum_{i\in \Sigma(\dom\! f)} \deg (f\circ i)
$
 for any arrow $f$ in $\R$.

\item[{\rm (III)}]
If $B\stackrel{f}{\rightarrow}A\stackrel{g}{\leftarrow} C$ is a diagram in $\D$, then its pullback $B\stackrel{p}{\leftarrow} U\stackrel{q}{\rightarrow} C$ satisfies that $\deg f=\deg q$ and $\deg g=\deg p$.

\end{enumerate}
\end{enumerate}

\medskip

This categorical setting encompasses the following examples of special interests to us.

\begin{exam}\label{ex:field} (Commutative separable algebras over a field).
Fix a field $E$. Let $\C^*$ be the category whose objects are the nonzero finite-dimensional commutative separable $E$-algebras and whose arrows are nonzero algebra homomorphisms over $E$. Set $\D^*$ to be the full subcategory of $\C^*$ generated by the objects which are fields. Then an arrow $f: F \to K$ in $\D^*$ is an embedding over $E$ of the field $F$ into the field $K$.

Without loss of generality we may regard the arrows of $\D^*$ as finite separable field extensions.
For any two field extensions $K/F$ and $L/F$, the tensor product $K\otimes_F L$ is a pushout of $K/F$ and $L/F$. For any two extensions $L/F$ and $L/K$, the field $F\cap K$ is a pullback of $L/F$ and $L/K$.

Each object of $\C^*$ is a finite direct sum of finite separable field extensions of $E$, which are objects of $\D^*$.
For each object $U$ of $\C^*$, let the set $\Sigma(U)$ consist of the projections from $U$ to each of its direct summands. Then an arrow $u$ from $U$ to an object $K$ in $\D^*$ has exactly one nontrivial restriction to its direct summand since $K$ does not contain nonzero zero divisors.
Hence there is a unique arrow $i \in \Sigma(U)$ such that $u = f \circ i$ for some arrow $f$ of $\D^*$.
Set
$$
\deg(F\rightarrow X)=\dim_F X
\qquad
\hbox{for all arrows $F\rightarrow X$ with domain $F$ in $\D^*$}.
$$
It follows from the above discussion that the opposite categories of $\C^*$ and $\D^*$, denoted by $\C$ and $\D$, respectively, satisfy (G1)--(G4).

Alternatively, we can concretely describe the categories $\C$ and $\D$ as follows. For each object $X$ of $\C^*$ its dual $X^* = \Hom_E(X, E)$  is the set of $E$-module homomorphisms from $X$ to $E$. Clearly $X^*$ is a free $E$-module of the same dimension over $E$ as $X$. Each arrow $f: X \to Y$ in $\C^*$ gives rise to an arrow $f^* : Y^* \to X^*$ sending $\phi \in Y^*$ to  $f^*(\phi) := \phi \circ f$ in $X^*$. Call $f^*$ the dual of $f$. Then $\C$ (resp. $\D$) can be regarded as the category whose objects and arrows are the duals of those of $\C^*$ (resp. $\D^*$).
\end{exam}

\begin{exam}\label{ex:graph} (Finite covers of a connected undirected graph).
Fix a connected undirected graph $G$.
Let $\C$ be the category whose objects are finite unramified covers of $G$ and whose arrows are finite unramified covering maps of undirected graphs. Such an arrow is a local graph isomorphism such that the preimage of each vertex in the codomain, called a fiber, consists of finitely many vertices.
Choose $\D$ to be the full subcategory of $\C$ generated by the connected graphs.

 For an undirected graph $X$, denote by $V(X)$ its vertex set and $E(X)$ its edge set. Let $f:Y\to X$ and $g:Z\to X$ be two arrows of $\D$.
Recall from  \cite{Hell:72} that the fiber product $Y\times_X Z$ is an undirected graph whose vertices are the pairs $(b,c)\in V(Y)\times V(Z)$ satisfying $f(b)=g(c)$. An edge connecting two vertices $(b,c)$, $(b',c')$ of $Y\times_X Z$ is
a pair $(x,y)\in E(Y)\times E(Z)$ such that $f(x)=g(y)$, $x$ connects $b$ and $b'$, and $y$ connects  $c$ and $c'$. The graph $Y\times_X Z$ together with the canonical projections $Y\times_X Z\to Y$ and $Y\times_X Z\to Z$ is a pullback of $Y\stackrel{f}{\rightarrow}X\stackrel{g}{\leftarrow} Z$. Therefore (G1) holds.

Given two arrows $f:X\to Y$ and $g:X\to Z$ of $\D$, let $Y \amalg_X Z$ be the connected graph obtained from a disjoint union of $Y$ and $Z$ by identifying the vertices and edges which have the same preimages in $X$ under $f$ and $g$. Then $Y \amalg_X Z$ along with the canonical maps $Y\to Y \amalg_X Z$ and $Z\to Y \amalg_X Z$ is a pushout of $Y\stackrel{f}{\leftarrow} X\stackrel{g}{\rightarrow} Z$. Therefore (G2-I) holds. (G2-II) and (G2-III) are obvious.

To see (G3), note that each object $U$ of $\C$ is a disjoint union of finitely many connected components $X_i$ of $U$. Let $\Sigma(U)$ consist of the inclusions from $X_i$ to $U$. Given any arrow $u : Y \to U$ with $Y$ in $\D$, since $Y$ is connected, the image of $u$ is a connected component of $U$, say, $X_i$. Hence $u$ factors through the unique inclusion map from $X_i$ to $U$. Finally, we define the degree of an arrow $X\rightarrow Y$ of $\C$. Since $Y$ is connected, the cardinalities of the fibers over the vertices of $Y$ are the same, this number is $\deg (X\rightarrow Y)$. In other words,
$$
\deg (X\rightarrow Y)= | {\rm fib}_Y X |.
\qquad
\hbox{for all arrows $X\rightarrow Y$ with codomain $Y$ in $\D$}.
$$
 Then $\deg$ satisfies (G4).
\end{exam}

\begin{exam}\label{ex:topology} (Finite covers of a connected topological space).
This is the same as the previous example, with graphs replaced by locally connected topological covering spaces of a connected topological space $M$.
\end{exam}

\begin{exam}\label{ex:variety} (Finite \'etale covers of an irreducible algebraic variety).
Let $V$ be an irreducible algebraic variety over a field $k$.
Set $\C$ to be the category whose objects are finite \'etale covers of $V$ and whose arrows are \'etale covering morphisms, and let $\D$ be the full subcategory generated by irreducible \'etale covers. This can be seen in two ways: geometrically as covering spaces similar to Examples~\ref{ex:graph} and \ref{ex:topology}, or algebraically as the dual category of the category of nonzero finite-dimensional commutative separable algebras over the field $E$ of $k$-rational functions on $V$ as in Example~\ref{ex:field}. However, it differs from the previous three examples in that the universal cover for the objects in the category $\D$ for the first three examples is again an object of the same nature, namely the algebraic closure of the field $E$ in Example~\ref{ex:field}, the maximal unramified connected graph cover of $G$ in Example~\ref{ex:graph}, and the maximal locally connected topological covering space of $M$ in Example~\ref{ex:topology}, but the universal object for finite irreducible \'etale covers of $V$ is no longer a variety.
\end{exam}

\section{Hom-sets and Galois closures}\label{sec:galois&homset}

Motivated by Examples \ref{ex:field}--\ref{ex:variety},
an arrow of $\D$ is called a {\it cover}\,; it has finite degree by (G4).  In this section we study properties of covers. In \S\ref{s:degone} we characterize the isomorphisms in $\D$ as covers of degree one.
In \S\ref{sec:hompullback} we describe $\Hom(g,f)$ for covers $f$ and $g$ with the same codomain in terms of their pullback. As a consequence we see that the size of $\Hom(g, f)$ is at most $\deg f$.
For any arrow $f$ let $\Aut(f)$ be the collection of all isomorphisms $g\in \Hom(f,f)$. Therefore the cardinality of $\Aut(f)$ is at most $\deg f$ for any cover $f$.

We define a Galois cover analogous to that in field extensions.

\begin{defn}\label{defn:galois}
A cover $f$ is called {\it Galois} {if $\Aut(f)$ contains exactly $\deg f$ covers.}
\end{defn}

Define a preorder $\sqsubseteq$ on the collection of arrows of $\C$ by $f\sqsubseteq g$ if $\cod f=\cod g$ and $\Hom(g, f)$ is nonempty.
A cover $g$ is said to be {\it least} with respect to the property $P$ if $g$ has property $P$ and $g\sqsubseteq h$ for any cover $h$ with $P$.
Call a cover $g$ {\it minimal} with respect to the property $P$ if  (1) $g$ has property $P$ and (2) $g\sqsubseteq h$ for any cover $h$ with $P$ and satisfying $h\sqsubseteq g$. Thus a least cover is necessarily minimal.

\begin{defn}\label{defn:galoisclosure}
A cover $g$ is called a {\it Galois closure} of a cover $f$ if $g$ is a minimal cover with respect to the property that $g$ is Galois and $f\sqsubseteq g$.
\end{defn}

In \S\ref{sec:galoisclosure}, for any cover $f$ we give a sufficient condition for a cover $g$ with $f\sqsubseteq g$ to be a Galois closure of $f$.

In general, two covers $f : A \to B$ and $g : C \to D$ are said to be isomorphic if there exist isomorphisms $p:A \to C$
and $q:B \to D$ such that $q\circ f=g\circ p$. In this paper we only consider the special case where $B=D$ by defining
 two covers $f$ and $g$ to be {\it isomorphic} if
$f \sqsubseteq g$ and $g \sqsubseteq f$.  As we shall see from Theorem \ref{thm:existgc} that the Galois closures of $f$ exist, and are unique up to isomorphism.
Furthermore every Galois closure $g$ of $f$ is least with respect to the property that $g$ is Galois and $f \sqsubseteq g$.

\subsection{The degree one covers as isomorphisms}\label{s:degone}
 In this subsection, we show that the isomorphisms in $\D$ are exactly the covers with degree one. We begin by observing some simple consequences of the condition (G4).

\begin{lem}\label{lem:deg<=}
If $f\sqsubseteq g$ for two covers $f$ and $g$, then $\deg f$ divides $\deg g$.
\end{lem}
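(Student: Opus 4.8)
The plan is to unwind the definition of $f \sqsubseteq g$ and use the multiplicativity of degree from (G4-I). By hypothesis there is an arrow $h \in \Hom(g,f)$, so $g = f \circ h$. First I would check that $h$ is itself a cover, i.e., an arrow of $\D$: its codomain is $\dom f$, which lies in $\D$ since $f$ is a cover; and since $\D$ is a full subcategory of $\C$ and $\dom h = \dom g$ also lies in $\D$ (as $g$ is a cover), the arrow $h$ belongs to $\D$. Hence $h$ is a cover and, by (G4), has a positive integer degree. Now $g = f \circ h$ is a composite of arrows in $\R$ (all have codomains in $\D$), so (G4-I) gives $\deg g = \deg f \cdot \deg h$. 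Since $\deg h$ is a positive integer, $\deg f$ divides $\deg g$.

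The only subtlety is making sure the composite $f \circ h$ is actually an arrow to which (G4-I) applies; this is immediate because $\R$ consists of arrows of $\C$ with codomains in $\D$, and $f$, $h$, and $g = f\circ h$ all have this form (their codomains are $\dom f$ and $\cod f = \cod g$, both objects of $\D$). So there is no real obstacle here — the lemma is essentially a one-line consequence of (G4-I) once one observes that $h$ lies in $\D$, which follows from fullness of $\D$ in $\C$.

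I do not expect any hard part; the main thing to state carefully is why $h$ is a cover (fullness of $\D$ together with the fact that both endpoints of $h$ are in $\D$), after which multiplicativity of the degree closes the argument.
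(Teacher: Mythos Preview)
Your proof is correct and follows exactly the same approach as the paper: pick $h \in \Hom(g,f)$, apply (G4-I) to $g = f \circ h$, and conclude from $\deg h$ being a positive integer. You are simply more explicit than the paper about why $h$ lies in $\D$ (via fullness) and why (G4-I) applies, which is fine but not strictly necessary.
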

\begin{proof}
Pick any $h\in \Hom(g,f)$. By (G4-I) we have $\deg g=\deg f\cdot \deg h$. Since $\deg h$ is a positive integer, the lemma follows.
\end{proof}

\begin{lem}\label{lem:1A}
$\deg 1_A=1$ for all objects $A$ of $\D$.
\end{lem}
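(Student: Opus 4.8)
The plan is to exploit the multiplicativity of the degree function in (G4-I) together with the fact that $1_A$ is idempotent. First I would note that $1_A$ is indeed a cover: it is an arrow of $\D$ (since $\D$ is a category, hence contains all identities of its objects), and its codomain $A$ lies in $\D$, so $1_A$ belongs to $\R$ and has a well-defined degree by (G4). Next, observe that $1_A = 1_A \circ 1_A$, and all three arrows in this composite are $1_A$ itself, hence all lie in $\R$ with codomain $A$ in $\D$. Applying (G4-I) to this composite gives
$$
\deg 1_A = \deg(1_A \circ 1_A) = \deg 1_A \cdot \deg 1_A.
$$
Since $\deg 1_A$ is a positive integer by the definition of the degree function in (G4), the equation $n = n^2$ forces $n = 1$; thus $\deg 1_A = 1$.

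I do not expect any real obstacle here — the only point requiring a moment's care is the verification that $1_A$ lies in the domain $\R$ of the degree function, so that (G4-I) is applicable; this is immediate once we recall that $\R$ consists of all arrows of $\C$ whose codomain lies in $\D$, and that $\D$ is a (full) subcategory of $\C$, so $1_A \in \D \subseteq \C$ and $\cod 1_A = A$ is an object of $\D$.
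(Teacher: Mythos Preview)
Your proof is correct and follows essentially the same argument as the paper: both use the idempotency $1_A = 1_A \circ 1_A$ together with the multiplicativity (G4-I) to obtain $\deg 1_A = (\deg 1_A)^2$, and then conclude from positivity that $\deg 1_A = 1$. Your version adds the explicit check that $1_A$ lies in $\R$, which the paper leaves implicit.
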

\begin{proof}
By (G4-I) we have
$$
(\deg 1_A)^2=\deg (1_A\circ 1_A)=\deg 1_A.
$$
Since $\deg 1_A$ is positive, it follows that $\deg 1_A=1$.
\end{proof}

Now we prove the main result of this subsection.

\begin{prop}\label{prop:iso}
A cover $f:B\to A$ is an isomorphism if and only if $\deg f=1$.
\end{prop}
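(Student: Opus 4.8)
The plan is to prove both directions, with the forward direction being essentially immediate from Lemma~\ref{lem:1A} and the reverse direction requiring the structural hypotheses (G2) and (G3).

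For the forward direction, suppose $f\colon B\to A$ is an isomorphism in $\D$, so there is an arrow $f^{-1}\colon A\to B$ with $f\circ f^{-1}=1_A$ (and $f^{-1}\circ f=1_B$). Applying (G4-I) to the composite $f\circ f^{-1}$ gives $\deg f\cdot\deg f^{-1}=\deg 1_A=1$ by Lemma~\ref{lem:1A}. Since both factors are positive integers, $\deg f=1$.

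For the reverse direction, assume $\deg f=1$; I want to show $f$ is an isomorphism. By (G2-III) it suffices to show that $f$ is monic, since a monic arrow of $\D$ is automatically an isomorphism. By (G2-II), $f$ is already epic, so one route is to show $f$ is also a split mono (or directly produce a two-sided inverse) and then invoke (G2-III), or alternatively to verify the monic condition directly. The natural tool is (G3): the degree-one hypothesis should force the fiber structure over $A$ to be trivial. Concretely, I would consider the pullback of $f$ against itself, $B\stackrel{p}{\leftarrow} U\stackrel{q}{\rightarrow} B$ over $A$, which exists by (G1). Using (G4-III), $\deg p=\deg q=\deg f=1$. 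Now $U$ need not lie in $\D$, but (G3) applies: write $U=\coprod_{i\in\Sigma(U)}\dom i$ in the appropriate sense, and by (G4-II) applied to $p$ (note $p$ has codomain $B$ in $\D$, so $p\in\R$), we get $1=\deg p=\sum_{i\in\Sigma(U)}\deg(p\circ i)$, forcing $\Sigma(U)$ to be a singleton, say $\Sigma(U)=\{i_0\}$, with $\dom i_0\in\D$ and $\deg(p\circ i_0)=1$. One then checks that $p\circ i_0$ and $q\circ i_0$ are both covers of degree one with the diagonal providing a section, and that the two projections from the pullback agree, which is exactly the statement that $f$ is monic. Then (G2-III) finishes.

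The main obstacle I anticipate is the bookkeeping around (G3) and the object $U=B\times_A B$, which is only guaranteed to be an object of $\C$, not of $\D$: one must carefully use that $\Sigma(U)$ decomposes it and that the degree function is only defined on arrows with codomain in $\D$, so (G4-I), (G4-II), (G4-III) must each be applied to arrows whose codomains genuinely lie in $\D$. Getting the diagonal $B\to U$ and checking it identifies $U$ (via the unique $i_0\in\Sigma(U)$) with $B$ so that $p=q$ — i.e. that the two legs of the kernel pair coincide — is the crux; once $f$ has a coincident kernel pair it is monic, and being a monic arrow of $\D$ it is an isomorphism by (G2-III).
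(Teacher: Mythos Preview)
Your proposal is correct and follows essentially the same route as the paper: the forward direction via (G4-I) and $\deg 1_A=1$, and the reverse direction via the self-pullback $B\times_A B$, (G4-II) forcing $|\Sigma(U)|=1$, the diagonal providing a section, and then (G2-II)/(G2-III) to conclude $f$ is monic hence an isomorphism. The paper fills in exactly the crux you flag at the end---using that the section $B\to I$ is epic (G2-II) to upgrade the one-sided inverse to a two-sided one, so that the single component $I$ of $U$ becomes a weak pullback in $\D$ with both legs equal to $1_B$, from which monicity of $f$ in $\D$ is immediate.
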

\begin{proof}
($\Rightarrow$):
Since $f^{-1}\in \Hom(1_A,f)$ this shows that $f\sqsubseteq 1_A$. By Lemma~\ref{lem:deg<=} we have $\deg f\leq \deg 1_A$ and $\deg 1_A=1$ by Lemma~\ref{lem:1A}. Since $\deg f$ is a positive integer by (G4) this forces that $\deg f=1$.

($\Leftarrow$): By (G1) there exists a pullback $B\stackrel{p}{\leftarrow}U\stackrel{q}{\rightarrow} B$ of $B\stackrel{f}{\rightarrow}A\stackrel{f}{\leftarrow} B$. By (G4-III) we have $\deg p=1$.
Condition (G4-II) implies that
$$
\sum_{i\in \Sigma(U)} \deg (p\circ i)=1.
$$
Therefore $\Sigma(U)$ contains exactly one arrow $i:I\to U$ with $I$ an object of $\D$.

Let $r=p\circ i$ and $s=q\circ i$. By the universal property of  $B\stackrel{p}{\leftarrow}U\stackrel{q}{\rightarrow} B$ the axiom (G3) implies that
$$
B
\stackrel{r}{\longleftarrow}
I
\stackrel{s}{\longrightarrow}
B
$$
is a {\it weak pullback} of $B\stackrel{f}{\rightarrow}A\stackrel{f}{\leftarrow} B$ in $\D$. This means that
$f\circ r=f\circ s$ and if there are two covers $g:C\to B$ and $h:C\to B$ with $f\circ g=f\circ h$ then there exists a cover $u:C\to I$ such that the diagram

\begin{table}[H]
\centering
\begin{tikzpicture}[descr/.style={fill=white}]
\matrix(m)[matrix of math nodes,
row sep=2.6em, column sep=2.8em,
text height=1.5ex, text depth=0.25ex]
{
C
&
&\\
&I
&B\\
&B
&A\\
};
\path[->,font=\scriptsize,>=angle 90]
(m-1-1) edge [bend left] node[above] {$h$} (m-2-3)
        edge [bend right] node[left] {$g$} (m-3-2)
(m-2-2) edge node[above] {$s$} (m-2-3)
        edge node[left] {$r$} (m-3-2)
(m-3-2) edge node[below] {$f$} (m-3-3)
(m-2-3) edge node[right] {$f$} (m-3-3); \path[->,dashed,font=\scriptsize,>=angle 90]
(m-1-1) edge node[descr] {$u$} (m-2-2);
\end{tikzpicture}
\end{table}

\noindent commutes. Take $C = B$, $g=1_{B}$ and $h=1_{B}$.
Then we have
\begin{gather*}
r\circ u=1_B.
\end{gather*}
By (G2-II) the cover $u$ is epic. Applying $u$ to the above equality on the left-hand side, it follows that $u\circ r\circ u=u$. Since $u$ is epic, this implies $u\circ r=1_I$.
Therefore $u$ is an isomorphism. This shows that $B\stackrel{1_{B}}{\leftarrow}B\stackrel{1_{B}}{\rightarrow} B$ is a weak pullback of  $B\stackrel{f}{\rightarrow}A\stackrel{f}{\leftarrow} B$ in $\D$ as well.

Now, suppose there are two covers $g$ and $h$ with $f\circ g=f\circ h$. By the weak universal property of $B\stackrel{1_{B}}{\leftarrow} B\stackrel{1_{B}}{\rightarrow}B$ there exists a cover $u$ such that $g=1_{B}\circ u$ and $h=1_{B}\circ u$. Therefore $g=h$. This shows that $f$ is a mono and hence an isomorphism  by (G2-III).
\end{proof}

As an application of Proposition~\ref{prop:iso}, we obtain some necessary and sufficient conditions for two covers to be isomorphic.

\begin{cor}\label{cor:iso_cover}
Let $f$ and $g$ be two covers satisfying $f\sqsubseteq g$. Then the following are equivalent:
\begin{enumerate}
\item $f$ is isomorphic to $g$;

\item $\deg f=\deg g$;

\item $\Hom(g,f)$ contains an isomorphism.
\end{enumerate}
Suppose {\rm (i)--(iii)} hold. Then all covers in $\Hom(g,f)$ are isomorphisms.
\end{cor}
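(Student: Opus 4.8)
The plan is to establish the cyclic chain of implications (i) $\Rightarrow$ (ii) $\Rightarrow$ (iii) $\Rightarrow$ (i), and to notice that the proof of (ii) $\Rightarrow$ (iii) actually delivers the stronger final assertion that \emph{every} cover in $\Hom(g,f)$ is an isomorphism, so that no separate argument is needed for the last sentence.

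For (i) $\Rightarrow$ (ii): by our definition, ``$f$ isomorphic to $g$'' means $f\sqsubseteq g$ and $g\sqsubseteq f$, so Lemma~\ref{lem:deg<=} gives $\deg f\mid\deg g$ and $\deg g\mid\deg f$; since both are positive integers, $\deg f=\deg g$. For (ii) $\Rightarrow$ (iii): the hypothesis $f\sqsubseteq g$ makes $\Hom(g,f)$ nonempty, so choose any $h\in\Hom(g,f)$, i.e.\ $g=f\circ h$. Then (G4-I) yields $\deg g=\deg f\cdot\deg h$, and the hypothesis $\deg f=\deg g$ forces $\deg h=1$; by Proposition~\ref{prop:iso}, $h$ is an isomorphism. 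Since $h$ was an arbitrary element of $\Hom(g,f)$, this proves both (iii) and the final claim at once.

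For (iii) $\Rightarrow$ (i): let $h\in\Hom(g,f)$ be an isomorphism, so $g=f\circ h$ and hence $f=g\circ h^{-1}$, which exhibits $h^{-1}\in\Hom(f,g)$ and therefore $g\sqsubseteq f$; together with the standing hypothesis $f\sqsubseteq g$ this is precisely the statement that $f$ and $g$ are isomorphic. The whole argument is just bookkeeping with the multiplicativity of $\deg$ from (G4-I) and with Lemma~\ref{lem:deg<=}; the only point that draws on substantive earlier work is the passage from $\deg h=1$ to ``$h$ is an isomorphism,'' which is exactly Proposition~\ref{prop:iso}, so I do not expect any genuine obstacle here.
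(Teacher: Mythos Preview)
Your proof is correct and follows essentially the same route as the paper: the same cyclic chain (i) $\Rightarrow$ (ii) $\Rightarrow$ (iii) $\Rightarrow$ (i), using Lemma~\ref{lem:deg<=} for (i) $\Rightarrow$ (ii), (G4-I) together with Proposition~\ref{prop:iso} for (ii) $\Rightarrow$ (iii), and the observation that this step already shows every element of $\Hom(g,f)$ is an isomorphism. Your write-up is a bit more explicit in spelling out (iii) $\Rightarrow$ (i), but the argument is the same.
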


\begin{proof}
(i) $\Rightarrow$ (ii): The conditions $f\sqsubseteq g$ and $g\sqsubseteq f$ imply $\deg f=\deg g$ by  Lemma~\ref{lem:deg<=}.

(ii) $\Rightarrow$ (iii): Since $f\sqsubseteq g$ the set $\Hom(g,f)$ is nonempty.  Since $\deg f=\deg g$, it follows from Proposition~\ref{prop:iso} that each cover in $\Hom(g,f)$ is an isomorphism.

(iii) $\Rightarrow$ (i): Immediate.
\end{proof}

\begin{cor}\label{cor:aut}
$\Aut(f)=\Hom(f,f)$ for any cover $f$.
\end{cor}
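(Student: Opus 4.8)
The plan is to reduce everything to Proposition~\ref{prop:iso}. By the definition of $\Aut(f)$ as the set of isomorphisms lying in $\Hom(f,f)$, the inclusion $\Aut(f)\subseteq\Hom(f,f)$ is automatic, so the only thing to prove is the reverse inclusion: every arrow $h\in\Hom(f,f)$ is an isomorphism.

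First I would record that every member of $\Hom(f,f)$ is in fact a cover. Writing $f\colon B\to A$, an arrow $h\in\Hom(f,f)$ satisfies $f=f\circ h$, so $h$ is an arrow of $\C$ whose domain and codomain are both $B$, an object of $\D$; since $\D$ is a \emph{full} subcategory of $\C$, $h$ is an arrow of $\D$, i.e.\ a cover, and in particular $\deg h$ is defined. Now apply (G4-I) to the factorization $f=f\circ h$ to get $\deg f=\deg f\cdot\deg h$; cancelling the positive integer $\deg f$ yields $\deg h=1$, and Proposition~\ref{prop:iso} then gives that $h$ is an isomorphism, hence $h\in\Aut(f)$. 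This establishes $\Hom(f,f)\subseteq\Aut(f)$ and finishes the proof. (Alternatively, one could invoke Corollary~\ref{cor:iso_cover} applied to the pair $(f,f)$: we have $f\sqsubseteq f$ since $1_B\in\Hom(f,f)$, and trivially $\deg f=\deg f$, so condition (ii) there holds and the final clause of that corollary says all covers in $\Hom(f,f)$ are isomorphisms.)

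I do not expect a genuine obstacle here; the statement is a short corollary of Proposition~\ref{prop:iso} together with the multiplicativity of the degree. The only point that deserves a line of justification — and the place where the hypothesis that $\D$ is full in $\C$ is used — is the remark that the elements of $\Hom(f,f)$ are themselves covers, so that the degree function and Proposition~\ref{prop:iso} may legitimately be applied to them.
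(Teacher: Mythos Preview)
Your proof is correct and essentially coincides with the paper's approach. The paper simply invokes Corollary~\ref{cor:iso_cover} with $g=f$ (exactly your parenthetical alternative), while your primary argument just unpacks that corollary down to Proposition~\ref{prop:iso} and the degree identity; the content is the same.
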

\begin{proof}
The corollary is immediate from Corollary~\ref{cor:iso_cover} since Corollary~\ref{cor:iso_cover}(ii) are satisfied when $g=f$.
\end{proof}

\subsection{Hom-sets and pullbacks}\label{sec:hompullback}

The main goal of this subsection is to describe the hom-set of two covers with the same codomain in terms of their pullbacks.

\begin{lem}\label{lem:pullback}
Let $f:B\to A$ and $g:C\to A$ denote two covers.
If $B\stackrel{p}{\leftarrow} U\stackrel{q}{\rightarrow} C$ is a pullback of  $B\stackrel{f}{\rightarrow}A\stackrel{g}{\leftarrow} C$, then
$$
\deg f=\sum_{i\in \Sigma(U)} \deg (q\circ i).
$$
\end{lem}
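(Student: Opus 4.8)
The plan is to deduce Lemma~\ref{lem:pullback} from the axioms (G4-II) and (G4-III) by a short computation. First I would apply (G4-II) directly to the cover $q:U\to C$, which lies in $\R$ since its codomain $C$ is in $\D$: this gives
$$
\deg q=\sum_{i\in\Sigma(U)}\deg(q\circ i).
$$
So it suffices to show $\deg f=\deg q$. But this is exactly what (G4-III) provides: since $B\stackrel{f}{\rightarrow}A\stackrel{g}{\leftarrow}C$ is a diagram in $\D$ with pullback $B\stackrel{p}{\leftarrow}U\stackrel{q}{\rightarrow}C$, condition (G4-III) asserts $\deg f=\deg q$ (and, symmetrically, $\deg g=\deg p$). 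Combining the two displayed equalities yields the claim.

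The one point that deserves a word of care is the bookkeeping about which arrows belong to $\R$ and hence have a degree: we must note that $q$ has codomain $C\in\D$, so $q\in\R$ and (G4-II) applies to it; and the pullback object $U$ is an object of $\C$ (by (G1)), so $\Sigma(U)$ is defined via (G3) and the arrows $q\circ i$ for $i\in\Sigma(U)$ have codomain $C\in\D$, hence also lie in $\R$. No ramification or further structure is needed, and there is no genuine obstacle here — the lemma is essentially a restatement of (G4-II) for the pullback projection $q$, with (G4-III) used to translate $\deg q$ back into $\deg f$. I would write the proof in two lines accordingly.
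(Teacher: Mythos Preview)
Your proof is correct and is exactly the paper's approach: the paper's proof reads simply ``Combine (G4-II) and (G4-III),'' which is precisely the two-step argument you spell out. Your added remarks verifying that $q$ and $q\circ i$ lie in $\R$ are accurate and make the one-line proof explicit.
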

\begin{proof}
Combine  (G4-II) and (G4-III).
\end{proof}

The following lemma provides a useful tool to distinguish the arrows in the set $\Sigma(U)$ for any object $U$ of $\C$ which occurs in a pullback of covers.

\begin{lem}\label{lem:comp=}
Let $f:B\to A$ and $g:C\to A$ denote two covers. Let $B\stackrel{p}{\leftarrow} U\stackrel{q}{\rightarrow} C$ be a pullback of $B\stackrel{f}{\rightarrow}A\stackrel{g}{\leftarrow}C$. Two arrows $i$, $j\in \Sigma(U)$ are equal if and only if there exists a cover $u : \dom i  \to \dom j$ such that
\begin{gather*}
p\circ j\circ u=p\circ i
\qquad  and  \qquad
q\circ j\circ u=q\circ i.
\end{gather*}
\end{lem}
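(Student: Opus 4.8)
The plan is to prove both directions by exploiting the uniqueness clause in axiom (G3) together with the characterization of isomorphisms in $\D$ as degree-one covers (Proposition~\ref{prop:iso}).

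For the forward direction, suppose $i=j$. Then $\dom i=\dom j$ and one may simply take $u=1_{\dom i}$; the two displayed equalities become $p\circ i=p\circ i$ and $q\circ i=q\circ i$, which hold trivially.

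For the converse, suppose such a cover $u:\dom i\to\dom j$ exists. First I would show $j\circ u=i$ as arrows into $U$. Both $i$ and $j\circ u$ are arrows of $\C$ with domain in $\D$ (note $\dom(j\circ u)=\dom u=\dom i$ lies in $\D$) and codomain $U$. By the universal property of the pullback $B\stackrel{p}{\leftarrow}U\stackrel{q}{\rightarrow}C$, an arrow from an object into $U$ is uniquely determined by its composites with $p$ and with $q$. The hypotheses say precisely that $p\circ(j\circ u)=p\circ i$ and $q\circ(j\circ u)=q\circ i$, so the pullback's universal property forces $j\circ u=i$. Hence $i$ factors through $j\in\Sigma(U)$, i.e. $\Hom(i,j)\neq\emptyset$. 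Now observe that $j$ itself lies in $\Sigma(U)$ and trivially $j$ factors through $j$; moreover $i\in\Sigma(U)$. Applying the uniqueness assertion in (G3) to the arrow $u'=i$ (which has domain $\dom i$ in $\D$ and codomain $U$), we know $i$ factors through a \emph{unique} element of $\Sigma(U)$. But $i=j\circ u$ shows $i$ factors through $j$, and $i=i\circ 1_{\dom i}$ shows $i$ factors through $i$; uniqueness then gives $i=j$.

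The main subtlety is just making sure the pullback in (G1) is a genuine pullback (not merely weak), so that the universal arrow $j\circ u=i$ is forced; this is exactly what (G1) provides, and the degree bookkeeping plays no role here. A secondary point to state carefully is that $\dom(j\circ u)$ indeed lies in $\D$ so that (G3) applies to the arrow $i$; this holds because $\D$ is a full subcategory and $\dom u=\dom i\in\D$. No heavy computation is needed — the lemma is a direct consequence of the uniqueness in (G3) combined with the universal property of pullbacks.
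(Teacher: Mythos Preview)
Your proof is correct and follows essentially the same route as the paper: use the universal property of the pullback to conclude $j\circ u=i$, then invoke the uniqueness clause of (G3) with the two factorizations $i=j\circ u$ and $i=i\circ 1_{\dom i}$. One small note: your opening sentence mentions Proposition~\ref{prop:iso}, but your argument never actually uses it, nor does the paper's; you may want to drop that reference.
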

\begin{proof}
Write $I$ for $\dom i$.

($\Rightarrow$): Take $u$ to be $1_I$.

($\Leftarrow$): Consider the commutative diagram:

\begin{table}[H]
\centering
\begin{tikzpicture}[descr/.style={fill=white}]
\matrix(m)[matrix of math nodes,
row sep=2.6em, column sep=2.8em,
text height=1.5ex, text depth=0.25ex]
{
I
&\\
&U
&C\\
&B
&A\\
};
\path[->,font=\scriptsize,>=angle 90]
(m-1-1) edge [bend right] node[left] {$p\circ i$} (m-3-2)
        edge [bend left] node[above] {$q\circ i$} (m-2-3)
(m-2-2) edge node[above] {$q$} (m-2-3)
        edge node[left] {$p$} (m-3-2)
(m-3-2) edge node[below] {$f$} (m-3-3)
(m-2-3) edge node[right] {$g$} (m-3-3); \path[->,dashed,font=\scriptsize,>=angle 90]
(m-1-1) edge node[descr] {$i$} (m-2-2);
\end{tikzpicture}
\end{table}

\noindent Observe that replacing the arrow $i$ by $j\circ u$, the above diagram still commutes. The universal property of $B\stackrel{p}{\leftarrow}U\stackrel{q}{\rightarrow}C$ implies that $i=j\circ u$ and hence $u \in \Hom(i,j)$. On the other hand, we have $1_I\in \Hom(i,i)$. Therefore the arrow $i$ factors through $i$ and $j$. Condition (G3) forces that  $i=j$.
\end{proof}

The following theorem establishes a bijection between the hom-set of two covers with the same codomain and the isomorphisms arising from their pullback. Note that the bijection $\Phi$ below is well-defined by Proposition~\ref{prop:iso}.

\begin{thm}\label{thm:homset}
Let $f:B\to A$ and $g:C\to A$ be two covers.
Let $B\stackrel{p}{\leftarrow} U\stackrel{q}{\rightarrow} C$ be a pullback of $B\stackrel{f}{\rightarrow}A\stackrel{g}{\leftarrow} C$. Then the map $\Phi$ from $\{i\in \Sigma(U)~|~\deg (q\circ i)=1\}$ to
$\Hom(g,f)$
given by
$$
i\quad \mapsto \quad p\circ i\circ(q\circ i)^{-1}
$$
is a bijection. In particular $|\Hom(g,f)|\leq \deg f$ and the equality holds if and only if
$$
\deg (q\circ i)=1 \qquad {\rm ~for~ all~~} i\in \Sigma(U).$$
\end{thm}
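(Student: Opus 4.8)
The plan is to construct an inverse map to $\Phi$ and verify the two compositions are identities, then read off the cardinality statement. First I would check that $\Phi$ is well-defined: for $i\in\Sigma(U)$ with $\deg(q\circ i)=1$, Proposition~\ref{prop:iso} says $q\circ i$ is an isomorphism, so $(q\circ i)^{-1}$ makes sense; then $p\circ i\circ(q\circ i)^{-1}$ has domain $C$ and codomain $B$, and composing with $f$ gives $f\circ p\circ i\circ(q\circ i)^{-1}=g\circ q\circ i\circ(q\circ i)^{-1}=g$ by the pullback square $f\circ p=g\circ q$, so indeed $\Phi(i)\in\Hom(g,f)$.

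Next I would define the reverse map $\Psi:\Hom(g,f)\to\{i\in\Sigma(U)\mid\deg(q\circ i)=1\}$. Given $h\in\Hom(g,f)$, i.e.\ $f\circ h=g$, the pair of arrows $h:C\to B$ and $1_C:C\to C$ satisfies $f\circ h=g=g\circ 1_C$, so the universal property of the pullback $B\xleftarrow{p}U\xrightarrow{q}C$ yields a unique arrow $t:C\to U$ with $p\circ t=h$ and $q\circ t=1_C$. Now $\dom t=C$ lies in $\D$, so by (G3) the arrow $t$ factors through a unique $i\in\Sigma(U)$, say $t=i\circ v$ with $v:C\to\dom i$ a cover; set $\Psi(h)=i$. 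To see $\deg(q\circ i)=1$: from $q\circ i\circ v=q\circ t=1_C$ we get $q\circ i\sqsubseteq$ (is a retraction onto) $1_C$, hence $1=\deg 1_C=\deg(q\circ i)\cdot\deg v$ by (G4-I) and Lemma~\ref{lem:1A}, forcing $\deg(q\circ i)=1$ (and incidentally $\deg v=1$, so $v$ is an isomorphism).

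Then I would check $\Psi\circ\Phi=\mathrm{id}$ and $\Phi\circ\Psi=\mathrm{id}$. For $\Psi(\Phi(i))$: with $h=p\circ i\circ(q\circ i)^{-1}$, the arrow $t':=i\circ(q\circ i)^{-1}:C\to U$ satisfies $p\circ t'=h$ and $q\circ t'=1_C$, so by uniqueness $t'=t$; since $(q\circ i)^{-1}$ is an isomorphism, $t=i\circ(q\circ i)^{-1}$ already exhibits $t$ as factoring through $i$, and by the uniqueness clause of (G3) we recover $\Psi(\Phi(i))=i$. For $\Phi(\Psi(h))$: writing $t=i\circ v$ as above with $v$ an isomorphism, we have $q\circ i=q\circ i\circ v\circ v^{-1}=1_C\circ v^{-1}=v^{-1}$, so $(q\circ i)^{-1}=v$, whence $\Phi(\Psi(h))=p\circ i\circ(q\circ i)^{-1}=p\circ i\circ v=p\circ t=h$. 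This establishes the bijection.

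Finally the cardinality statement: $|\Hom(g,f)|=|\{i\in\Sigma(U)\mid\deg(q\circ i)=1\}|\le\sum_{i\in\Sigma(U)}\deg(q\circ i)=\deg f$ by Lemma~\ref{lem:pullback}, since each term is a positive integer and is at least $1$. Equality holds precisely when every term in the sum equals $1$, i.e.\ $\deg(q\circ i)=1$ for all $i\in\Sigma(U)$. The main obstacle I anticipate is the bookkeeping around (G3): one must be careful to use \emph{both} the existence and the uniqueness parts of (G3) in the right places (existence to produce $i$ from $t$, uniqueness to pin down $\Psi(\Phi(i))=i$), and to confirm that the factoring cover $v$ is genuinely an isomorphism via the degree argument before identifying $(q\circ i)^{-1}$ with $v$.
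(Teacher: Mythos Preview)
Your argument is correct and follows essentially the same route as the paper: both construct, for a given $h\in\Hom(g,f)$, the arrow $t:C\to U$ via the universal property of the pullback applied to $(h,1_C)$, factor $t$ through some $i\in\Sigma(U)$ by (G3), and then use the degree identity $\deg(q\circ i)\cdot\deg v=\deg 1_C=1$ to see that $q\circ i$ is an isomorphism. The only packaging difference is that the paper proves surjectivity and injectivity of $\Phi$ separately, invoking Lemma~\ref{lem:comp=} for injectivity, whereas you verify $\Psi\circ\Phi=\mathrm{id}$ directly from the uniqueness clause of (G3); this neatly sidesteps Lemma~\ref{lem:comp=} altogether.
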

\begin{proof}
(Surjectivity) Given $h\in \Hom(g,f)$, we need to find an $i\in \Sigma(U)$ with $\deg (q\circ i)=1$ such that $\Phi(i)=h$. By the universal property of $B\stackrel{p}{\leftarrow} U\stackrel{q}{\rightarrow} C$, there exists a unique arrow $u:C\to U$ such that the diagram

\begin{table}[H]
\centering
\begin{tikzpicture}[descr/.style={fill=white}]
\matrix(m)[matrix of math nodes,
row sep=2.6em, column sep=2.8em,
text height=1.5ex, text depth=0.25ex]
{
C
&\\
&U
&C\\
&B
&A\\
};
\path[->,font=\scriptsize,>=angle 90]
(m-1-1) edge [bend right] node[left] {$h$} (m-3-2)
        edge [bend left] node[above] {$1_C$} (m-2-3)
(m-2-2) edge node[above] {$q$} (m-2-3)
        edge node[left] {$p$} (m-3-2)
(m-3-2) edge node[below] {$f$} (m-3-3)
(m-2-3) edge node[right] {$g$} (m-3-3); \path[->,dashed,font=\scriptsize,>=angle 90]
(m-1-1) edge node[descr] {$u$} (m-2-2);
\end{tikzpicture}
\end{table}

\noindent commutes. By (G3) there exists an arrow $i\in \Sigma(U)$ with $\Hom(u,i)\not=\emptyset$. Pick any $j \in \Hom(u,i)$. Since $u=i\circ j$ and $1_C=q\circ u$, it follows that $1_C=(q\circ i)\circ j$. Applying the function $\deg$ to both sides, we obtain that $\deg (q\circ i)=1$ using Lemma~\ref{lem:1A} and (G4-I). By Proposition \ref{prop:iso} the arrow $q\circ i$ is an isomorphism with inverse $j$. The commutativity of the above diagram implies that
$$
\Phi(i) = p\circ i\circ(q\circ i)^{-1} = p \circ i \circ j = p \circ u = h,
$$
as desired.

(Injectivity) Suppose that $\Phi$ sends two arrows $i,j\in \Sigma(U)$ with $\deg (q\circ i)=1$ and $\deg (q\circ j)=1$ to the same cover in $\Hom(g,f)$. Namely
$p\circ i\circ(q\circ i)^{-1}=p\circ j\circ (q\circ j)^{-1}$. Observe that the cover $u=(q\circ j)^{-1}\circ q\circ i$ satisfies
$p\circ j\circ u=p\circ i$
and
$q\circ j\circ u=q\circ i$.
Therefore $i=j$ by Lemma~\ref{lem:comp=}.

It follows from the bijectivity of $\Phi$ and Lemma~\ref{lem:pullback} that
$
|\Hom(g,f)|\leq \deg f
$
and the equality holds if and only if $\deg (q\circ i)=1$ for all $i\in \Sigma(U)$.
This proves the second assertion.
\end{proof}

We conclude this subsection with some consequences of Theorem~\ref{thm:homset}.

\begin{cor}\label{cor:atleast1}
Let $f:B\to A$ denote a cover and
$B\stackrel{p}{\leftarrow} U\stackrel{q}{\rightarrow} B$ denote a pullback of $B\stackrel{f}{\rightarrow} A\stackrel{f}{\leftarrow} B$. Then there exists an arrow $i\in \Sigma(U)$ such that $\deg (p\circ i)=1$.
\end{cor}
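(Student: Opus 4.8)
\textbf{Proof proposal for Corollary~\ref{cor:atleast1}.}

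The plan is to apply Theorem~\ref{thm:homset} to the special case $g=f$, where the pullback in question is a pullback of the diagram $B\stackrel{f}{\rightarrow}A\stackrel{f}{\leftarrow}B$. In that situation the two legs $p$ and $q$ play symmetric roles, and the hom-set appearing in the theorem is $\Hom(f,f)$, which is nonempty because it always contains the identity $1_B$. So the idea is: nonemptiness of $\Hom(f,f)$ forces, via the bijection $\Phi$ of Theorem~\ref{thm:homset}, the existence of some $i\in\Sigma(U)$ with $\deg(q\circ i)=1$; then by symmetry of the two projections we get the statement for $p$ as well.

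More precisely, first I would invoke Theorem~\ref{thm:homset} with $g=f$ and the given pullback $B\stackrel{p}{\leftarrow}U\stackrel{q}{\rightarrow}B$. The theorem gives a bijection $\Phi$ from $\{i\in\Sigma(U)\mid \deg(q\circ i)=1\}$ onto $\Hom(f,f)$. Since $1_B\in\Hom(f,f)$, the target is nonempty, hence the source is nonempty: there exists $i\in\Sigma(U)$ with $\deg(q\circ i)=1$. This already proves the analogous statement with $q$ in place of $p$. To get the statement as stated (with $p$), I would observe that $B\stackrel{q}{\leftarrow}U\stackrel{p}{\rightarrow}B$ is also a pullback of $B\stackrel{f}{\rightarrow}A\stackrel{f}{\leftarrow}B$ (a pullback of a diagram is symmetric under swapping the two sides), and apply Theorem~\ref{thm:homset} to this reversed pullback; the resulting bijection has source $\{i\in\Sigma(U)\mid \deg(p\circ i)=1\}$, which is again nonempty because $\Hom(f,f)\ni 1_B$. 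This yields an $i\in\Sigma(U)$ with $\deg(p\circ i)=1$, as required.

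Alternatively, and perhaps more cleanly, one can avoid the symmetry remark entirely: applying Theorem~\ref{thm:homset} directly to the given pullback produces $i\in\Sigma(U)$ with $\deg(q\circ i)=1$, and then since $\Hom(f,f)$ is also nonempty one could instead note that $\Phi^{-1}(1_B)$ is an arrow $i_0\in\Sigma(U)$ with $\deg(q\circ i_0)=1$ and $p\circ i_0\circ(q\circ i_0)^{-1}=1_B$, so that $p\circ i_0=q\circ i_0$ is an isomorphism, giving $\deg(p\circ i_0)=1$ by Proposition~\ref{prop:iso}. Either route works; I would favor the second, since it directly exhibits the desired $i$ and uses only the already-established machinery. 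I do not anticipate a genuine obstacle here — the only point requiring a moment's care is making sure that the element of $\Sigma(U)$ whose image under $\Phi$ is the identity indeed has $p\circ i$ of degree one, which follows because $\Phi(i)=1_B$ forces $p\circ i=q\circ i$, and $q\circ i$ is an isomorphism by construction of the domain of $\Phi$.
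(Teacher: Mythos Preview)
Your proposal is correct and follows essentially the same approach as the paper: apply Theorem~\ref{thm:homset} with $g=f$ and use $1_B\in\Hom(f,f)$. The paper's one-line proof glosses over the $p$ versus $q$ distinction that you carefully address; your second route (taking $i_0=\Phi^{-1}(1_B)$ and noting $p\circ i_0=q\circ i_0$) is a clean way to fill that gap.
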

\begin{proof}
Since $1_B\in \Hom(f,f)$ the lemma is immediate from Theorem \ref{thm:homset} with $g = f$.
\end{proof}

The second one is to extend the estimate of $|\Hom(g,f)|$ from a cover $g$ to any arrow $f$ in $\C$ with the same codomain in $\D$.

\begin{cor}\label{cor:homC}
Let $g:B\to A$ denote a cover and $f:U\to A$ an arrow of $\C$. Then
$$
|\Hom(g,f)|\leq \deg f.
$$
Moreover the equality holds only if $|\Hom(g,f\circ i)|=\deg (f\circ i)$ for each $i\in\Sigma(U)$.
\end{cor}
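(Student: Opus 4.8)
The plan is to reduce the bound for an arbitrary arrow $f$ to the case of covers already settled in Theorem~\ref{thm:homset}, by splitting $\Hom(g,f)$ along the canonical factorizations supplied by (G3). The first step is to record that for every $i\in\Sigma(U)$ the composite $f\circ i$ is a cover: its domain $\dom i$ lies in $\D$ by (G3), its codomain $A$ lies in $\D$ because $g$ is a cover, and $\D$ is a full subcategory of $\C$. Thus Theorem~\ref{thm:homset}, applied to the two covers $g$ and $f\circ i$ (which share the codomain $A$), yields $|\Hom(g,f\circ i)|\le\deg(f\circ i)$ for each $i\in\Sigma(U)$.

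Next I would produce a surjection
$$
\Psi\colon\ \coprod_{i\in\Sigma(U)}\Hom(g,f\circ i)\ \longrightarrow\ \Hom(g,f),\qquad (i,h')\longmapsto i\circ h' .
$$
It is well defined because $f\circ(i\circ h')=(f\circ i)\circ h'=g$, so $i\circ h'\in\Hom(g,f)$. For surjectivity, take $h\in\Hom(g,f)$; since $g$ is a cover, $\dom h=\dom g$ is an object of $\D$, so (G3) provides the unique $i\in\Sigma(U)$ with $\Hom(h,i)\ne\emptyset$. Choosing $h'\in\Hom(h,i)$ gives $h=i\circ h'$, and then $(f\circ i)\circ h'=f\circ h=g$ shows $h'\in\Hom(g,f\circ i)$ with $\Psi(i,h')=h$.

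It remains to count. By (G4-II) we have $\deg f=\sum_{i\in\Sigma(U)}\deg(f\circ i)$; as this is a positive integer and every summand is a positive integer, $\Sigma(U)$ is finite. The surjection $\Psi$ then gives
$$
|\Hom(g,f)|\ \le\ \sum_{i\in\Sigma(U)}|\Hom(g,f\circ i)|\ \le\ \sum_{i\in\Sigma(U)}\deg(f\circ i)\ =\ \deg f,
$$
which is the first assertion. If equality holds throughout, then in particular $\sum_{i\in\Sigma(U)}|\Hom(g,f\circ i)|=\sum_{i\in\Sigma(U)}\deg(f\circ i)$, and since $|\Hom(g,f\circ i)|\le\deg(f\circ i)$ for each $i$, this forces $|\Hom(g,f\circ i)|=\deg(f\circ i)$ for every $i\in\Sigma(U)$, giving the stated necessary condition. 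I do not expect a genuine obstacle; the two points to watch are that $f\circ i$ is really a cover (so that Theorem~\ref{thm:homset} applies) and that (G3) is applicable to $h$ because $\dom h=\dom g$ lies in $\D$. In particular injectivity of $\Psi$ is \emph{not} required and may fail, so I would not try to upgrade $\Psi$ to a bijection.
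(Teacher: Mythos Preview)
Your proof is correct and follows essentially the same approach as the paper: both construct the surjection $(i,h')\mapsto i\circ h'$ from $\coprod_{i\in\Sigma(U)}\Hom(g,f\circ i)$ onto $\Hom(g,f)$ via (G3), bound each summand by Theorem~\ref{thm:homset}, and sum using (G4-II). Your explicit verification that $f\circ i$ is a cover and that $\Sigma(U)$ is finite are points the paper leaves implicit, but the argument is otherwise identical.
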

\begin{proof}
Consider the set
$$
S=\{(i,k)~|~i\in \Sigma(U)~\hbox{and}~k\in \Hom(g,f\circ i)\}.
$$
By Theorem~\ref{thm:homset} we have
$$
|S|=\sum_{i\in \Sigma(U)}|\Hom(g,f\circ i)|
\leq \sum_{i\in \Sigma(U)}\deg (f\circ i).
$$
By (G4-II) the latter sum is equal to $\deg f$. Therefore
$$
|S|\leq \deg f.
$$
On the other hand, there is a map $\phi:S\to \Hom(g,f)$ defined by
$$
(i,k)
\quad
\mapsto
\quad
i\circ k
\qquad \quad
\hbox{for all $(i,k)\in S$}.
$$
Pick any $u\in \Hom(g,f)$. By (G3) there exists a unique arrow $i\in \Sigma(U)$ such that $\Hom(u,i)\not=\emptyset$. Pick any $k\in \Hom(u,i)$. Then $\phi$ sends $(i,k)$ to $u$. Therefore $\phi$ is surjective and hence
$$
|\Hom(g,f)|\leq |S|.
$$
This lemma follows by combining the above inequalities.
\end{proof}

The third consequence  is to bound the size of $\Aut(f)$.

\begin{cor}\label{cor:galois}
Let $f:B\to A$ be a cover. Then $|\Aut(f)|\leq \deg f$. Furthermore let $B\stackrel{p}{\leftarrow}U\stackrel{q}{\rightarrow} B$ be a pullback of $B\stackrel{f}{\rightarrow}A\stackrel{f}{\leftarrow}B$.
 Then the following are equivalent:
\begin{enumerate}
\item $f$ is Galois.

\item $\deg (p\circ i)=1$ for all $i\in \Sigma(U)$.

\item $\deg (q\circ i)=1$ for all $i\in \Sigma(U)$.
\end{enumerate}
\end{cor}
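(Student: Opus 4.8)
The plan is to deduce everything from Theorem~\ref{thm:homset} together with Corollary~\ref{cor:aut}, so the argument is essentially bookkeeping once those are in hand. First I would record the bound $|\Aut(f)|\le\deg f$: by Corollary~\ref{cor:aut} we have $\Aut(f)=\Hom(f,f)$, and applying Theorem~\ref{thm:homset} with $g=f$ to the pullback $B\stackrel{p}{\leftarrow}U\stackrel{q}{\rightarrow}B$ gives $|\Hom(f,f)|\le\deg f$. Combining these two facts with Definition~\ref{defn:galois}, the cover $f$ is Galois if and only if $|\Hom(f,f)|=\deg f$.

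Next I would prove the equivalence (i)$\Leftrightarrow$(iii). By the observation just made, $f$ is Galois precisely when $|\Hom(f,f)|=\deg f$. Theorem~\ref{thm:homset}, applied with $g=f$ and the given pullback $B\stackrel{p}{\leftarrow}U\stackrel{q}{\rightarrow}B$, asserts that this equality holds if and only if $\deg(q\circ i)=1$ for every $i\in\Sigma(U)$, which is exactly condition (iii).

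The equivalence (i)$\Leftrightarrow$(ii) will follow from a symmetry observation. Since the cospan $B\stackrel{f}{\rightarrow}A\stackrel{f}{\leftarrow}B$ has the same arrow on both sides, a span $B\stackrel{p}{\leftarrow}U\stackrel{q}{\rightarrow}B$ is a pullback of it if and only if the ``swapped'' span $B\stackrel{q}{\leftarrow}U\stackrel{p}{\rightarrow}B$ is one: both the commutativity relation $f\circ p=f\circ q$ and the universal property are symmetric in $p$ and $q$ in this situation. Hence Theorem~\ref{thm:homset}, applied this time to the pullback $B\stackrel{q}{\leftarrow}U\stackrel{p}{\rightarrow}B$ with $g=f$ (so that $p$ now plays the role of the second leg), shows that $|\Hom(f,f)|=\deg f$ if and only if $\deg(p\circ i)=1$ for every $i\in\Sigma(U)$, i.e.\ condition (ii). Together with the first paragraph this yields (i)$\Leftrightarrow$(ii), closing the chain of equivalences.

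I do not expect a genuine obstacle here: the content is entirely carried by Theorem~\ref{thm:homset}, and the only step that needs a moment's thought is the symmetry of the pullback used to obtain (ii), which is immediate because the two legs of the cospan coincide.
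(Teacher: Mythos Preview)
Your proof is correct and follows the same approach as the paper: both invoke Corollary~\ref{cor:aut} to identify $\Aut(f)$ with $\Hom(f,f)$ and then apply Theorem~\ref{thm:homset} with $g=f$. The paper's proof is terser and does not spell out the symmetry argument you give for (i)$\Leftrightarrow$(ii), but your observation that swapping $p$ and $q$ again yields a pullback of the symmetric cospan $B\stackrel{f}{\rightarrow}A\stackrel{f}{\leftarrow}B$ is exactly what makes the ``immediate'' claim go through.
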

\begin{proof}
Recall from Corollary~\ref{cor:aut} that $\Aut(f)=\Hom(f,f)$. Thus this corollary is immediate by applying
Theorem~\ref{thm:homset} with $g = f$.
\end{proof}

Next we apply the estimates on cardinalities of hom-sets to draw more conclusions on Galois covers.

\begin{cor}\label{cor:char_galois}
For any cover $g$ the following are equivalent:
\begin{enumerate}
\item $g$ is Galois.

\item $|\Hom(f,g)|=\deg g$ for all covers $f$ with $g\sqsubseteq f$.

\item $|\Hom(g,f)|=\deg f$ for all covers $f$ with $f\sqsubseteq g$.
\end{enumerate}
\end{cor}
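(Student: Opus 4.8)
The plan is to establish the chain of implications (i)$\Rightarrow$(ii)$\Rightarrow$(iii)$\Rightarrow$(i), using Corollary~\ref{cor:galois} to handle the self-pullback of $g$ and Theorem~\ref{thm:homset} together with Lemma~\ref{lem:deg<=} to relate the various hom-sets. The underlying idea is that $g$ being Galois means its self-pullback is ``as split as possible,'' and this splitting propagates to pullbacks against any cover comparable with $g$.

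For (i)$\Rightarrow$(ii): assume $g:B\to A$ is Galois and let $f:C\to A$ be a cover with $g\sqsubseteq f$, so there is some $h\in\Hom(f,g)$. Form a pullback $C\stackrel{p}{\leftarrow}U\stackrel{q}{\rightarrow}B$ of $C\stackrel{f}{\rightarrow}A\stackrel{g}{\leftarrow}B$. By Theorem~\ref{thm:homset} (applied with the roles so that the second projection $p$ to $C$ is the relevant one), $|\Hom(f,g)|=\deg g$ exactly when $\deg(p\circ i)=1$ for all $i\in\Sigma(U)$; so I must show this holds. The key observation is to compare this pullback with the self-pullback of $g$: since $g\sqsubseteq f$, precomposing along $h$ gives a map from $\Sigma(U)$ into the summand structure of a pullback of $g$ with itself, and I would argue that each $p\circ i$ factors (after composing with an arrow of $\D$) through a summand of the self-pullback $B\stackrel{p'}{\leftarrow}U'\stackrel{q'}{\rightarrow}B$ of $B\stackrel{g}{\rightarrow}A\stackrel{g}{\leftarrow}B$. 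By Corollary~\ref{cor:galois}(ii), every such summand $q'\circ i'$ has degree one, hence is an isomorphism by Proposition~\ref{prop:iso}; chasing degrees through (G4-I) and the factorization then forces $\deg(p\circ i)=1$. Alternatively, and perhaps more cleanly: $g$ Galois means $|\Aut(g)|=\deg g$, and composing automorphisms of $g$ with $h$ produces $\deg g$ distinct elements of $\Hom(f,g)$ (distinctness because $g$ is epic by (G2-II), or by Corollary~\ref{cor:aut}), giving $|\Hom(f,g)|\geq\deg g$; combined with the general bound $|\Hom(f,g)|\leq\deg g$ from Theorem~\ref{thm:homset}, equality follows. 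I expect this second route to be the shorter one.

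For (iii)$\Rightarrow$(i): apply (iii) with the particular choice $f=g$, which is legitimate since $g\sqsubseteq g$ via $1_B\in\Hom(g,g)$. Then $|\Hom(g,g)|=\deg g$, and $\Hom(g,g)=\Aut(g)$ by Corollary~\ref{cor:aut}, so $|\Aut(g)|=\deg g$, which is precisely the definition of $g$ being Galois. For (ii)$\Rightarrow$(iii): again specialize to $f=g$ (now $g\sqsubseteq f$ reads $g\sqsubseteq g$, which holds), obtaining $|\Hom(g,g)|=\deg g$, hence $g$ is Galois by the argument just given for (iii)$\Rightarrow$(i), and then feed this back into the already-proved (i)$\Rightarrow$(ii)-type reasoning on the other side — more precisely, once $g$ is known to be Galois, the symmetric statement $|\Hom(g,f)|=\deg f$ for all $f$ with $f\sqsubseteq g$ follows from Theorem~\ref{thm:homset} by showing the relevant pullback summands all have degree one, which again reduces to Corollary~\ref{cor:galois}.

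The main obstacle is the core implication ``$g$ Galois $\Rightarrow$ the pullback of $g$ against a comparable cover $f$ is totally split in the relevant direction.'' The cleanest treatment is to avoid pullback bookkeeping entirely and argue via cardinalities: the automorphism-composition map $\Aut(g)\to\Hom(f,g)$, $\sigma\mapsto\sigma\circ h$ (for a fixed $h\in\Hom(f,g)$), is injective because every cover — in particular $h$, after noting $h$ is epic by (G2-II) — can be cancelled appropriately; this yields $\deg g=|\Aut(g)|\leq|\Hom(f,g)|$, and Theorem~\ref{thm:homset} supplies the reverse inequality $|\Hom(f,g)|\leq\deg g$, so equality holds. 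One must double-check that $\sigma\circ h$ genuinely lands in $\Hom(f,g)$, i.e. that $f=g\circ(\sigma\circ h)=g\circ\sigma\circ h=g\circ h=f$, using $\sigma\in\Aut(g)=\Hom(g,g)$ so $g\circ\sigma=g$; this is immediate. With that in hand the entire corollary falls out by specialization to $f=g$ and Corollary~\ref{cor:aut}.
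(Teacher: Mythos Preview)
Your arguments for (i)$\Rightarrow$(ii), (ii)$\Rightarrow$(i), and (iii)$\Rightarrow$(i) are correct and essentially match the paper: the map $\sigma\mapsto\sigma\circ h$ from $\Aut(g)$ to $\Hom(f,g)$ is injective because $h$ is epic, and specializing to $f=g$ recovers (i) via Corollary~\ref{cor:aut}. The gap is in (i)$\Rightarrow$(iii), which you need for your chain (ii)$\Rightarrow$(iii). You treat it as ``symmetric'' to (i)$\Rightarrow$(ii) or as something that ``reduces to Corollary~\ref{cor:galois}'', but neither is justified. The asymmetry is real: for (iii) you have $h\in\Hom(g,f)$ and would want $\sigma\mapsto h\circ\sigma$ to produce $\deg f$ distinct elements of $\Hom(g,f)$, but $h$ epic gives only \emph{right} cancellation, so this map need not be injective. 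Likewise, your pullback sketch (comparing the pullback of $f$ and $g$ to the self-pullback of $g$) would require showing that every $i\in\Sigma(U)$ is hit by some component of the self-pullback, and you give no argument for that surjectivity.

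The paper closes this gap with a coset argument you do not mention: for $h\in\Hom(g,f)$ one has $h\circ\sigma=h\circ\sigma'$ if and only if $\sigma'\sigma^{-1}\in\Aut(h)$, so the image of $\sigma\mapsto h\circ\sigma$ has size $|\Aut(g)|/|\Aut(h)|$. Since $g$ is Galois, $|\Aut(g)|=\deg g$, and Corollary~\ref{cor:galois} gives $|\Aut(h)|\leq\deg h$; hence the image has size at least $\deg g/\deg h=\deg f$ by (G4-I). Combined with the upper bound $|\Hom(g,f)|\leq\deg f$ from Theorem~\ref{thm:homset}, equality follows. This is the missing idea; without it your proposed chain does not close.
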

\begin{proof}
(ii) $\Rightarrow$ (i) or (iii) $\Rightarrow$ (i): Take $f=g$ and apply Corollary~\ref{cor:aut}.

(i) $\Rightarrow$ (ii): Fix a cover $f$ with $g\sqsubseteq f$ and let $h\in \Hom(f,g)$.
By (G2-II) the cover $h$ is epic and hence
$$
k\circ h
\qquad \quad
\hbox{for all $k\in \Aut(g)$}
$$
are distinct. This shows that
$$
|\Hom(f,g)|\geq \deg g.
$$
Combined with Theorem~\ref{thm:homset}, we have $|\Hom(f,g)|=\deg g$, as desired.

(i) $\Rightarrow$ (iii): Fix a cover $f$ with $f\sqsubseteq g$ and let $h\in \Hom(g,f)$. Clearly $\Aut(h)$ is a subgroup of $\Aut(g)$.
If $i$ and $j$ are two distinct right $\Aut(h)$-coset representatives in $\Aut(g)$, then $h \circ i \ne h \circ j$ for otherwise we would have $i \circ j^{-1} \in \Aut(h)$, a contradiction. Therefore the set
$$
S=\{h\circ i~|~ i \in \Aut(h)\backslash \Aut(g)\}
$$
has cardinality
$$
|S|=\frac{|\Aut(g)|}{|\Aut(h)|}.
$$
We have $|\Aut(g)|=\deg g$ since $g$ is Galois, and $|\Aut(h)|\leq \deg h$ by Corollary \ref{cor:galois}.  Hence
$$
|S|\geq \frac{\deg g}{\deg h}=\deg f,
$$
in which the last equality follows from (G4-I). On the other hand, since $f\circ h=g$, we have $S\subseteq \Hom(g,f)$. Combined with Theorem~\ref{thm:homset}, we obtain
$$
|S|\leq |\Hom(g,f)|\leq \deg f.
$$
The estimates of $|S|$ lead us to $|\Hom(g,f)|=\deg f$, as desired.
\end{proof}

As a by-product of the proof of Corollary \ref{cor:char_galois}, we have the following result.

\begin{cor}\label{cor:galoisup}
Let $f$ and $g$ denote two covers with $f\sqsubseteq g$. If $g$ is Galois, then so is each cover $h\in \Hom(g,f)$.
\end{cor}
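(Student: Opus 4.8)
The plan is to reuse, essentially verbatim, the computation carried out in the proof of Corollary~\ref{cor:char_galois} in the implication (i)$\Rightarrow$(iii), and to observe that the chain of inequalities appearing there is in fact a chain of equalities, from which the claim drops out.

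First I would set up notation: write $f:B\to A$ and $g:C\to A$, so that $h\in\Hom(g,f)$ means $h:C\to B$ with $f\circ h=g$. As recorded in the proof of Corollary~\ref{cor:char_galois}, $\Aut(h)$ is a subgroup of $\Aut(g)$: if $\alpha\in\Aut(h)$ then $h\circ\alpha=h$, hence $g\circ\alpha=f\circ h\circ\alpha=f\circ h=g$, so $\alpha\in\Hom(g,g)=\Aut(g)$ by Corollary~\ref{cor:aut}. Moreover the composite $h\circ i$ depends only on the right coset $\Aut(h)\,i$, and distinct right cosets give distinct composites (if $h\circ i=h\circ j$ then $i\circ j^{-1}\in\Aut(h)$), so the set
$$
S=\{\,h\circ i \mid i\in \Aut(h)\backslash\Aut(g)\,\}
$$
satisfies $S\subseteq\Hom(g,f)$ (since $f\circ(h\circ i)=g$) and $|S|=|\Aut(g)|/|\Aut(h)|$. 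All of this is already in the proof of Corollary~\ref{cor:char_galois} and can simply be cited.

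Next I would assemble the estimates. Since $g$ is Galois, $|\Aut(g)|=\deg g$; by Corollary~\ref{cor:galois}, $|\Aut(h)|\le\deg h$; and by (G4-I), $\deg g=\deg f\cdot\deg h$ (using $g=f\circ h$). Therefore
$$
|S|=\frac{\deg g}{|\Aut(h)|}\ \ge\ \frac{\deg g}{\deg h}\ =\ \deg f .
$$
On the other hand, $|S|\le|\Hom(g,f)|\le\deg f$ by Theorem~\ref{thm:homset}. Hence every inequality above is an equality; in particular $\deg g/|\Aut(h)|=\deg g/\deg h$, which forces $|\Aut(h)|=\deg h$. By Definition~\ref{defn:galois} this says exactly that $h$ is Galois.

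I do not expect any genuine obstacle here: the entire content lies in the equality case of inequalities already proved in Corollary~\ref{cor:char_galois}, which is why this is stated as a by-product. The only point deserving a moment of care is the combinatorial bookkeeping that $|S|=[\Aut(g):\Aut(h)]$ — that $h\circ i$ is constant on right cosets and separates distinct cosets — and this has already been verified in the proof of Corollary~\ref{cor:char_galois}, so it only needs to be referenced rather than reproved.
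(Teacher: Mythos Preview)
Your proposal is correct and follows exactly the paper's approach: the paper's proof simply observes that the chain of estimates of $|S|$ in the proof of (i)$\Rightarrow$(iii) of Corollary~\ref{cor:char_galois} forces $|\Aut(h)|=\deg h$, which is precisely what you spell out in detail.
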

\begin{proof}
In  the proof (i) $\Rightarrow$ (iii) of Corollary~\ref{cor:char_galois}, the estimates of $|S|$ also imply that $|\Aut(h)|=\deg h$. Therefore $h$ is Galois.
\end{proof}

\subsection{A sufficient condition for Galois closures}\label{sec:galoisclosure}

For any two covers $f$, $g$ with $f\sqsubseteq g$, the goal of this subsection is to show that if $g$ is minimal with respect to the property $|\Hom(g,f)|=\deg f$ then $g$ is a Galois closure of $f$. To prove this, we need some preparation.

Suppose that $f$, $g$ and $h$ are three covers satisfying $f\sqsubseteq g\sqsubseteq h$. Then every arrow $p\in \Hom(h,g)$ induces a map $p^*:\Hom(g,f)\to \Hom(h,f)$ given by
$$
k \quad \mapsto \quad k\circ p
\qquad \quad
\hbox{for all $k\in \Hom(g,f)$}.
$$

\begin{lem}\label{lem:indmap}
Assume that $f$, $g$ and $h$ are three covers with $f\sqsubseteq g\sqsubseteq h$. Then for any $p\in \Hom(h,g)$ the induced map $p^*:\Hom(g,f)\to \Hom(h,f)$ is injective. In particular, if $|\Hom(g,f)|=\deg f$ then $|\Hom(h,f)|=\deg f$.
\end{lem}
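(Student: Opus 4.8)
The plan is to prove that the induced map $p^*$ is injective by exploiting condition (G2-II), which says that every arrow of $\D$ is epic. Recall that $p\in \Hom(h,g)$ means $h = g\circ p$, and that each element $k\in \Hom(g,f)$ satisfies $g = f\circ k$, so that $p^*(k) = k\circ p$ indeed satisfies $h = g\circ p = f\circ k\circ p = f\circ(k\circ p)$, confirming $k\circ p\in\Hom(h,f)$ and hence that $p^*$ is well-defined.

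For injectivity, suppose $k_1, k_2\in \Hom(g,f)$ satisfy $p^*(k_1) = p^*(k_2)$, that is, $k_1\circ p = k_2\circ p$. The key observation is that $p$, being an arrow of $\D$ (it lies in $\Hom(h,g)$, so it is a cover), is epic by (G2-II). Cancelling $p$ on the right then gives $k_1 = k_2$. This establishes the first assertion.

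For the final sentence, I would combine this injectivity with the general cardinality bound from Theorem~\ref{thm:homset}. Since $f\sqsubseteq h$ (which follows from $f\sqsubseteq g\sqsubseteq h$ because $\sqsubseteq$ is a preorder — composing $p\in\Hom(h,g)$ with any element of $\Hom(g,f)$ witnesses $f\sqsubseteq h$), Theorem~\ref{thm:homset} gives $|\Hom(h,f)|\leq \deg f$. On the other hand, the injectivity of $p^*$ gives $|\Hom(h,f)|\geq |\Hom(g,f)| = \deg f$ under the hypothesis. Together these yield $|\Hom(h,f)| = \deg f$.

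I do not expect any serious obstacle here; the argument is short and the only subtlety is making sure that $p$ is genuinely an arrow of $\D$ so that (G2-II) applies — but this is immediate since $\Hom(h,g)$ consists of arrows $p$ with $g = h\circ\cdots$ wait, with $h = g\circ p$, and both $g$ and $h$ being covers (arrows of $\D$) forces $p$ to be an arrow of $\D$ as well (its domain and codomain are objects of $\D$, and $\D$ is a full subcategory of $\C$). The verification that $p^*$ lands in $\Hom(h,f)$ and the bookkeeping with the preorder $\sqsubseteq$ are routine.
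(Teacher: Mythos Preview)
Your proposal is correct and follows essentially the same argument as the paper: use (G2-II) to conclude that $p$ is epic and hence $p^*$ is injective, then combine the resulting inequality $|\Hom(h,f)|\geq |\Hom(g,f)|$ with the upper bound $|\Hom(h,f)|\leq \deg f$ from Theorem~\ref{thm:homset}. Your extra verification that $p$ is an arrow of $\D$ (via fullness of $\D$ in $\C$) and that $p^*$ is well-defined are fine added details; the informal ``wait'' should of course be cleaned up in a final write-up.
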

\begin{proof}
Let $p$ be a cover such that $h=g\circ p$. Since $p$ is epic by (G2-II), the map $p^*$ is injective. We know $|\Hom(h,f)|\leq \deg f$ by Theorem~\ref{thm:homset}. Hence, if $|\Hom(g,f)|=\deg f$, then $|\Hom(h,f)|=\deg f$.
\end{proof}

\begin{lem}\label{lem:transhom}
Let $g$, $h$, $p$, and $q$ be four covers such that the following diagram commutes:
\begin{table}[H]
\centering
\begin{tikzpicture}
[descr/.style={fill=white}]
\matrix(m)[matrix of math nodes,
row sep=2.6em, column sep=2.8em,
text height=1.5ex, text depth=0.25ex]
{
D
&C
\\
B
&A
\\
};
\path[->,font=\scriptsize,>=angle 90]
(m-1-1) edge node[above] {$q$} (m-1-2)
        edge node[left] {$p$} (m-2-1)
(m-1-2) edge node[right] {$h$} (m-2-2)
(m-2-1) edge node[below] {$g$} (m-2-2);
\end{tikzpicture}
\end{table}

\noindent Assume that there is a cover $f$ satisfying $f\sqsubseteq g$, $f\sqsubseteq h$ and $|\Hom(h,f)|=\deg f$. Then there exists a unique map $\phi:\Hom(g,f) \to \Hom(h,f)$ such that
$$
\phi(k)\circ q=k\circ p
\qquad \quad
\hbox{for all $k\in \Hom(g,f)$}.
$$
Moreover $\phi$ is injective.
\end{lem}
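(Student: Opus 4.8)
The plan is to construct $\phi$ by descending each composite $k\circ p$ along the epimorphism $q$, obtaining the descent from a pullback together with condition (G3). Write $f\colon F\to A$. First I would use (G1) to form a pullback $F\stackrel{p'}{\leftarrow}U\stackrel{q'}{\rightarrow}C$ of $F\stackrel{f}{\rightarrow}A\stackrel{h}{\leftarrow}C$ and apply Theorem~\ref{thm:homset} to the pair of covers $f$ and $h$: the map $\Phi\colon i\mapsto p'\circ i\circ(q'\circ i)^{-1}$ is a bijection from $\{i\in\Sigma(U)\mid\deg(q'\circ i)=1\}$ onto $\Hom(h,f)$, and the equality clause of that theorem together with the hypothesis $|\Hom(h,f)|=\deg f$ forces $\deg(q'\circ i)=1$ for \emph{every} $i\in\Sigma(U)$. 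By Proposition~\ref{prop:iso} each $q'\circ i$ is then an isomorphism, so $\Phi$ is defined on all of $\Sigma(U)$ and takes values in $\Hom(h,f)$. This is the only place the counting hypothesis enters, and extracting it is the crux of the argument.

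Next, for each $k\in\Hom(g,f)$ I would note that $f\circ(k\circ p)=g\circ p=h\circ q$, using $f\circ k=g$ and the commuting square, so the universal property of the pullback yields a unique arrow $w\colon D\to U$ with $p'\circ w=k\circ p$ and $q'\circ w=q$. Since $D=\dom p$ is an object of $\D$, condition (G3) provides the unique $i\in\Sigma(U)$ through which $w$ factors, say $w=i\circ j$; I then set $\phi(k):=\Phi(i)\in\Hom(h,f)$, which depends only on $k$ since $i$ is unique. The defining identity of $\phi$ is then a short chase using $q=q'\circ w=q'\circ i\circ j$ and $\Phi(i)\circ(q'\circ i)=p'\circ i$: $\phi(k)\circ q=\Phi(i)\circ q'\circ i\circ j=\bigl(\Phi(i)\circ(q'\circ i)\bigr)\circ j=(p'\circ i)\circ j=p'\circ w=k\circ p$. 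Uniqueness of $\phi$ follows at once since $q$ is epic by (G2-II): any $\phi'$ with $\phi'(k)\circ q=k\circ p=\phi(k)\circ q$ agrees with $\phi$.

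Finally, injectivity of $\phi$ drops out by the same reasoning applied to $p$: if $\phi(k_1)=\phi(k_2)$ then $k_1\circ p=\phi(k_1)\circ q=\phi(k_2)\circ q=k_2\circ p$, and $p$ is epic by (G2-II), hence $k_1=k_2$. So, apart from the counting input of the first paragraph, the argument is just the pullback-and-(G3) construction followed by two invocations of the fact that covers are epic; the step I expect to need the most care is applying Theorem~\ref{thm:homset} with the correct roles of $f$ and $h$, so that the hypothesis $|\Hom(h,f)|=\deg f$ translates into ``every $q'\circ i$ is an isomorphism.''
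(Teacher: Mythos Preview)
Your argument is correct, but the paper takes a much shorter route. Instead of building the pullback of $f$ and $h$ and invoking Theorem~\ref{thm:homset} to invert $q'\circ i$ componentwise, the paper works directly with the common composite $r:=g\circ p=h\circ q$ and the two induced maps $p^{*}:\Hom(g,f)\to\Hom(r,f)$ and $q^{*}:\Hom(h,f)\to\Hom(r,f)$ from Lemma~\ref{lem:indmap}. That lemma already gives $p^{*}$ injective, and since $|\Hom(h,f)|=\deg f$ it also gives $|\Hom(r,f)|=\deg f$, so $q^{*}$ is a bijection by cardinality; one then sets $\phi=(q^{*})^{-1}\circ p^{*}$ and reads off the defining identity and injectivity immediately.

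The trade-off: the paper's proof is essentially two lines once Lemma~\ref{lem:indmap} is in hand, because it packages the ``descent along $q$'' as the inverse of the finite injection $q^{*}$. Your approach instead unpacks that inverse explicitly via the pullback and (G3), which is more constructive---you actually produce $\phi(k)$ as $\Phi(i)$ for a specific $i\in\Sigma(U)$---but at the cost of reproving machinery already encapsulated in Theorem~\ref{thm:homset}. Your remark that ``the step needing the most care is applying Theorem~\ref{thm:homset} with the correct roles of $f$ and $h$'' is exactly the place where the paper's argument is more economical: it never needs that theorem, only the counting consequence of Lemma~\ref{lem:indmap}.
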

\begin{proof}
{Since $q$ is epic by (G2-II), the desired property of the map $\phi$ implies the uniqueness of $\phi$.}
Denote by $r$ the cover $g\circ p=h\circ q$. We have $f \sqsubseteq g \sqsubseteq r$ and $f \sqsubseteq h \sqsubseteq r$. By Lemma~\ref{lem:indmap} the map $p^*: \Hom(g, f) \to \Hom(r, f)$ is injective and the map $q^* : \Hom(h, f) \to \Hom(r, f)$ is a bijection because of the assumption $|\Hom(h,f)|=\deg f$.
Then the injective map
$$
\phi=(q^*)^{-1}\circ p^*
$$
has the desired property.
\end{proof}

\begin{thm}\label{thm:suffgalois}
Let $f$, $g$ denote two covers with $f\sqsubseteq g$.
If $g$ is a minimal cover with respect to the property
$|\Hom(g,f)|=\deg f$, then $g$ is a Galois closure of $f$.
\end{thm}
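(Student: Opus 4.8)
The statement splits into three assertions: $f\sqsubseteq g$; that $g$ is Galois; and that $g$ is minimal among the Galois covers dominating $f$. The first is immediate, since $\deg f\geq 1$ forces $\Hom(g,f)\neq\emptyset$. The third follows quickly once the second is known: if $h$ is a Galois cover with $f\sqsubseteq h$ and $h\sqsubseteq g$, then Corollary~\ref{cor:char_galois}(iii), applied to $h$ and to the relation $f\sqsubseteq h$, gives $|\Hom(h,f)|=\deg f$, so $h$ has exactly the property with respect to which $g$ is assumed minimal; together with $h\sqsubseteq g$, this minimality yields $g\sqsubseteq h$. Thus the whole matter reduces to showing that a cover $g$ minimal with respect to the property $|\Hom(g,f)|=\deg f$ is Galois.

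My plan for that is to construct the expected Galois closure of $f$ directly, to identify it with $g$ using minimality, and then to prove that the constructed object is Galois. Write $f\colon F\to A$, put $n=\deg f$, and list $\Hom(g,f)=\{k_1,\dots,k_n\}$, a set of $n$ distinct arrows because $|\Hom(g,f)|=\deg f$. I would form the $n$-fold fibre power $W:=F\times_A\cdots\times_A F$ inside $\C$; as (G1) only guarantees pullbacks of cospans lying in $\D$, one builds $W$ one factor at a time, at each stage replacing the current $\C$-object $U$ by the $\D$-objects $\{\dom i : i\in\Sigma(U)\}$ before pulling back against $F$ again, so that only $\D$-cospans are ever pulled back. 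Since $f\circ k_j=g$ for every $j$, the $k_j$ assemble by the universal property of $W$ into one arrow $(k_1,\dots,k_n)\colon\dom g\to W$, which by (G3) factors through a unique $t\in\Sigma(W)$; let $\widehat g\colon\dom t\to A$ be the structure arrow of $\dom t$ and $\pi_j\colon W\to F$ the projections. Routine diagram chases then give: the factoring arrow $\dom g\to\dom t$ lies in $\Hom(g,\widehat g)$, so $\widehat g\sqsubseteq g$; each $\pi_j\circ t$ lies in $\Hom(\widehat g,f)$, so $f\sqsubseteq\widehat g$; and the $\pi_j\circ t$ are pairwise distinct, since precomposing $\pi_j\circ t$ with the factoring arrow returns $k_j$. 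By Theorem~\ref{thm:homset} these $n$ distinct members of $\Hom(\widehat g,f)$ force $|\Hom(\widehat g,f)|=n=\deg f$. So $\widehat g$ has the property witnessing the minimality of $g$ while $\widehat g\sqsubseteq g$; hence $g\sqsubseteq\widehat g$, the covers $g$ and $\widehat g$ are isomorphic, and, since isomorphic covers have equal degree and automorphism groups of equal size, it suffices to show $\widehat g$ is Galois.

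Showing $\widehat g$ is Galois is the heart of the matter. By Corollary~\ref{cor:char_galois}(ii) it is enough to prove $|\Hom(h,\widehat g)|=\deg\widehat g$ for every cover $h$ with $\widehat g\sqsubseteq h$; there $|\Hom(h,f)|=n$, by Lemma~\ref{lem:indmap} applied to $f\sqsubseteq\widehat g\sqsubseteq h$. The governing idea is that $\dom t$ is ``generated over $A$'' by the conjugate images of the arrows $\pi_j\circ t$, which exhaust $\Hom(\widehat g,f)$; so an arrow $\dom h\to\dom t$ over $A$ ought to be completely determined by the function sending each $j$ to the element of $\Hom(h,f)$ obtained by composing $\pi_j\circ t$ with it, and, conversely, every such function that is ``admissible'' --- compatible across all factors of $W$ --- ought to occur; balancing these admissible functions against the degree identities in (G4) and Theorem~\ref{thm:homset} should yield exactly $\deg\widehat g$ arrows in $\Hom(h,\widehat g)$. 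The main obstacle is to make ``generated over $A$'' and ``completely determined'' precise without invoking a universal object: one needs $t\in\Sigma(W)$ to be monic, so that an arrow into $\dom t$ is recovered from its image in $W$, and then to carry out the compatibility count by induction on the number of factors of $W$, with Lemma~\ref{lem:transhom} supplying the step that propagates compatibility from one factor to the next. I expect the monicity of $t$ and this inductive count to absorb most of the effort; the remainder is bookkeeping with the results already established in this section.
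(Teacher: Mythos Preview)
Your reduction in the first paragraph is correct and matches the paper's. The construction of $\widehat g$ as a component of the iterated fibre power of $f$, together with the verification that $\widehat g\sqsubseteq g$ and $|\Hom(\widehat g,f)|=\deg f$ (hence $g\cong\widehat g$ by minimality), is sound and can be carried out under (G1)--(G4) alone.

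The gap is in your final step. You correctly identify that to recover $u\in\Hom(h,\widehat g)$ from the tuple $(\pi_j\circ t\circ u)_j$ you need $t$ to be monic; but this is precisely the extra hypothesis (G5), introduced only later in the paper, and it is \emph{not} derivable from (G1)--(G4). Axiom (G3) guarantees uniqueness of the component $i\in\Sigma(U)$ through which a given arrow factors, but says nothing about the uniqueness of the factoring arrow. So the injectivity-based count you outline cannot be completed in the generality of Theorem~\ref{thm:suffgalois}. (Your overall strategy can in fact be rescued: a purely numerical argument, essentially what later appears as Lemma~\ref{lem:bound}, shows that if $|\Hom(h,f)|=\deg f$ and $|\Hom(h,g_m)|=\deg g_m$ then $|\Hom(h,g_{m+1})|=\deg g_{m+1}$ for every component $g_{m+1}$ of the next fibre product, with no monicity needed; iterating gives $|\Hom(h,\widehat g)|=\deg\widehat g$. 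But this replaces the plan you actually sketched.)

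The paper takes a different route that sidesteps the monicity issue entirely by exploiting the pushout axiom (G2-I), which you do not use. It works with the self-pullback $C\stackrel{p}{\leftarrow}U\stackrel{q}{\rightarrow}C$ of $g$, and for each $i\in\Sigma(U)$ forms the pushout $C\stackrel{r}{\to}D\stackrel{s}{\leftarrow}C$ of $p\circ i$ and $q\circ i$; using Lemma~\ref{lem:transhom} it produces a bijection between $\Hom(g,f)$ and $\Hom(h,f)$ for the induced cover $h:D\to A$, so $|\Hom(h,f)|=\deg f$ with $h\sqsubseteq g$. Minimality then forces $g\cong h$, so $r,s$ are isomorphisms, and a short argument with Lemma~\ref{lem:comp=} yields $\deg(p\circ i)=1$, hence $g$ is Galois by Corollary~\ref{cor:galois}. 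The trade-off: the paper's argument is local (one component of one pullback at a time) and uses pushouts; yours is global (the full fibre power) and, once patched with the counting lemma above, uses only pullbacks.
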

\begin{proof}
By Corollary \ref{cor:char_galois} any Galois cover $h$ with $f\sqsubseteq h\sqsubseteq g$ satisfies $|\Hom(h,f)|=\deg f$. Then  $g\sqsubseteq h$ by the minimality of $g$. Thus, to see that $g$ is a Galois closure of $f$, it remains to show that $g$ is a Galois cover.

Write $f:B\to A$ and $g:C\to A$. Let $C\stackrel{p}{\leftarrow}U\stackrel{q}{\rightarrow} C$ denote a pullback of $C\stackrel{g}{\rightarrow}A\stackrel{g}{\leftarrow}C$.
To prove that $g$ is Galois, by Corollary \ref{cor:galois}, it suffices to show that $\deg (p\circ i)=1$ for all $i\in \Sigma(U)$.

Let $i:I\to U$ denote an arrow in $\Sigma(U)$. Consider the commutative diagram:
\begin{table}[H]
\centering
\begin{tikzpicture}
[descr/.style={fill=white}]
\matrix(m)[matrix of math nodes,
row sep=2.6em, column sep=2.8em,
text height=1.5ex, text depth=0.25ex]
{
I
&C
\\
C
&A
\\
};
\path[->,font=\scriptsize,>=angle 90]
(m-1-1) edge node[above] {$q\circ i$} (m-1-2)
        edge node[left] {$p\circ i$} (m-2-1)
(m-1-2) edge node[right] {$g$} (m-2-2)
(m-2-1) edge node[below] {$g$} (m-2-2);
\end{tikzpicture}
\end{table}
\noindent Since $|\Hom(g,f)|=\deg f$, by applying Lemma~\ref{lem:transhom} we get a bijection $\phi:\Hom(g,f)\to \Hom(g,f)$ such that
$$
\phi(k)\circ q\circ i=k\circ p\circ i
\qquad \quad
\hbox{for all $k\in \Hom(g,f)$.}
$$
By (G2-I) there exists a pushout
$$
C\stackrel{r}{\rightarrow}D\stackrel{s}{\leftarrow}C
$$
of $C\stackrel{p\circ i}{\longleftarrow}I\stackrel{q\circ i}{\longrightarrow} C$ in $\D$.
 The universal property of $C\stackrel{r}{\rightarrow}D\stackrel{s}{\leftarrow}C$ implies that, for each $k\in \Hom(g,f)$, there exists a unique cover $\psi(k):D\to B$ such that the diagram

\begin{table}[H]
\centering
\begin{tikzpicture}[descr/.style={fill=white}]
\matrix(m)[matrix of math nodes,
row sep=2.6em, column sep=2.8em,
text height=1.5ex, text depth=0.25ex]
{
I
&C
&
\\
C
&D
&
\\
&
&B\\
};
\path[->,font=\scriptsize,>=angle 90]
(m-1-1) edge node[above] {$q\circ i$} (m-1-2)
        edge node[left]  {$p\circ i$} (m-2-1)
(m-1-2) edge node[right] {$s$} (m-2-2)
        edge[bend left] node[right] {$\phi(k)$} (m-3-3)
(m-2-1) edge node[below] {$r$} (m-2-2)
        edge[bend right] node[below] {$k$} (m-3-3);
 \path[->,dashed,font=\scriptsize,>=angle 90]
(m-2-2) edge node[descr] {$\psi(k)$} (m-3-3);
\end{tikzpicture}
\end{table}
\noindent commutes. The universal property of $C\stackrel{r}{\rightarrow}D\stackrel{s}{\leftarrow}C$ also implies the existence of a unique cover $h:D\to A$ such that the diagram

\begin{table}[H]
\centering
\begin{tikzpicture}[descr/.style={fill=white}]
\matrix(m)[matrix of math nodes,
row sep=2.6em, column sep=2.8em,
text height=1.5ex, text depth=0.25ex]
{
I
&C
&
\\
C
&D
&
\\
&
&A\\
};
\path[->,font=\scriptsize,>=angle 90]
(m-1-1) edge node[above] {$q\circ i$} (m-1-2)
        edge node[left]  {$p\circ i$} (m-2-1)
(m-1-2) edge node[right] {$s$} (m-2-2)
        edge[bend left] node[right] {$g$} (m-3-3)
(m-2-1) edge node[below] {$r$} (m-2-2)
        edge[bend right] node[below] {$g$} (m-3-3);
 \path[->,dashed,font=\scriptsize,>=angle 90]
(m-2-2) edge node[descr] {$h$} (m-3-3);
\end{tikzpicture}
\end{table}
\noindent commutes. The above diagram still commutes if $h$ is replaced by $f\circ \psi(k)$ for any $k\in \Hom(g,f)$.
By the uniqueness of $h$ we have
$$
h=f\circ \psi(k)
\qquad \quad
\hbox{for each $k\in \Hom(g,f)$}.
$$
In other words $\psi(k)\in \Hom(h,f)$ for all $k\in \Hom(g,f)$. Hence $\psi$ defines a map from $\Hom(g,f)$ to $ \Hom(h,f)$. To see the bijectivity of $\psi$, we consider the   map $r^*:\Hom(h,f)\to \Hom(g,f)$ induced from $r$. By the construction of $\psi$ we see that
$$
\psi \circ r^*=1.
$$
Given any $k\in \Hom(g,f)$, in view of
$\psi(k)\circ r=k$, the map $r^*:\Hom(h,f)\to \Hom(g,f)$ sends $\psi(k)$ to $k$. This shows that
$$
r^*\circ \psi=1.
$$
Therefore $r^*$ is the inverse of $\psi$. Hence $\psi$ is a bijection and  $|\Hom(h,f)|=\deg f$.

Since $g=h\circ r$, we have $h\sqsubseteq g$. The minimality of $g$ forces $g\sqsubseteq h$. Therefore $g$ is isomorphic to $h$. By Corollary~\ref{cor:iso_cover} the two covers $r,s\in \Hom(g,h)$ are isomorphisms . By the universal property of $C\stackrel{p}{\leftarrow}U\stackrel{q}{\rightarrow}C$ there exists a unique arrow $t:D\to U$ such that the diagram

\begin{table}[H]
\centering
\begin{tikzpicture}[descr/.style={fill=white}]
\matrix(m)[matrix of math nodes,
row sep=2.6em, column sep=2.8em,
text height=1.5ex, text depth=0.25ex]
{
D
&
&
\\
&U
&C
\\
&C
&A\\
};
\path[->,font=\scriptsize,>=angle 90]
(m-2-2) edge node[above] {$q$} (m-2-3)
        edge node[left]  {$p$} (m-3-2)
(m-2-3) edge node[right] {$g$} (m-3-3)
(m-3-2) edge node[below] {$g$} (m-3-3)
(m-1-1) edge[bend left] node[above] {$s^{-1}$} (m-2-3)
        edge[bend right] node[left] {$r^{-1}$} (m-3-2);
 \path[->,dashed,font=\scriptsize,>=angle 90]
(m-1-1) edge node[descr] {$t$} (m-2-2);
\end{tikzpicture}
\end{table}
\noindent commutes. By (G3) there exists a unique $j\in \Sigma(U)$ such that $\Hom(t,j)\not=\emptyset$. By Proposition \ref{prop:iso}, $\deg r^{-1}=1$ and $\deg s^{-1}=1$ and this forces that $\deg (p\circ j)=1$ and $\deg (q\circ j)=1$ by (G4-I). If we can show $i=j$, then the theorem will follow.

By Proposition \ref{prop:iso} both $p\circ j$ and $q\circ j$ are isomorphisms. Clearly the cover $u=(p\circ j)^{-1}\circ p\circ i$ satisfies
$$
p\circ j\circ u=p\circ i.
$$
It follows from the construction of $r$ and $s$ that $r\circ p\circ i=s\circ q\circ i$. On the other hand,
by the choice of $j$ we have $(p\circ j)^{-1}\circ r^{-1}=(q\circ j)^{-1}\circ s^{-1}$.
These two relations combined yields
$$
q\circ j\circ u=q\circ i.
$$
Hence  we conclude $i=j$ from Lemma~\ref{lem:comp=}, as desired.
\end{proof}

\section{Two algorithms to find Galois closures}\label{sec:algo}

In this section we give two algorithms, Algorithm $\mathds I$ and Algorithm $\mathds R$, to find Galois closures of a given cover. They are stated in \S\ref{sec:algoIR}. Theoretically speaking, each step in Algorithms $\mathds I$ and $\mathds R$ can be executed under (G1)--(G4). More precisely, the conditions (G1), (G3) and (G4) guarantee the existence of the pullbacks of any two covers with the same codomain, the set $\Sigma(U)$ for any object $U$ of $\C$, and $\deg f$ for any cover $f$, respectively. In theoretical computer science, an algorithm is said to be {\it correct} (cf. \cite[p. 6]{MIT}) if it terminates with an output possessing the desired property. We verify in \S\ref{sec:algoI} the correctness
of Algorithm $\mathds I$ under the following necessary assumptions which ensure the termination of Algorithm $\mathds I$:
\begin{enumerate}
\item[(A1)] For any diagram $B\rightarrow A\leftarrow C$ in $\D$, its pullbacks in $\C$ can be identified in finite time.

\item[(A2)] For any object $U$ of $\C$, the set $\Sigma(U)$ can be determined in finite time.

\item[(A3)] For any arrow $f$ of $\D$, $\deg f$ can be evaluated in finite time.
\end{enumerate}
\noindent
In \S\ref{sec:algoR} we show that together with an additional condition (G5) the Algorithm $\mathds R$ is correct.

\subsection{Algorithm $\mathds I$ and Algorithm $\mathds R$}\label{sec:algoIR}

We now state the two algorithms to produce the Galois closures of a cover. The first algorithm is an iterative method:

\begin{center}
{\bf Algorithm $\mathds I$}
\end{center}

\medskip

Let $f:B\to A$ denote the input cover. Initially set $g:C\to A$ to be the cover $f$ and $n=1$. Begin with (1).
\begin{enumerate}
\item[(1)] If $n\geq \deg f-1$, then output $g$. Else go to (2).

\item[(2)] Find a pullback $B\stackrel{p}{\leftarrow }U\stackrel{q}{\rightarrow} C$ of $B\stackrel{f}{\rightarrow} A\stackrel{g}{\leftarrow} C$ and the set $\Sigma(U)$ of $U$. Go to (3).

\item[(3)] If $\deg (q\circ i)=1$ for all $i\in \Sigma(U)$, then output $g$. Else go to (4).

\item[(4)] Choose an $i\in \Sigma(U)$ with $\deg (q\circ i)>1$ and go to (5).

\item[(5)] Reset $g$ to be $g\circ q\circ i$ and increase $n$ by one. Go back to (1).
\end{enumerate}

\medskip

The second one is a divide-and-conquer solution for the Galois closure problem:

\medskip

\begin{center}
{\bf Algorithm $\mathds R$}
\end{center}

For an input cover $g$, denote by $\mathds R(g)$ the output cover after applying Algorithm $\mathds R$.
Given a cover $f:B\to A$, Algorithm $\mathds R$ begins with (1).
\medskip

\setlist[enumerate,1]{leftmargin=2em}

\begin{enumerate}
\item[(1)] If $\deg f\leq 2$, then output $f$. Else go to (2).

\item[(2)] Find a pullback $B\stackrel{p}{\leftarrow }U\stackrel{q}{\rightarrow} B$ of $B\stackrel{f}{\rightarrow} A\stackrel{f}{\leftarrow}B$ and go to (3).

\item[(3)]
Set $S=\{\mathds{R}(p\circ i)~|~i\in \Sigma(U)\}$ and go to (4).

\item[(4)]
While $|S|>1$ do the following:

\begin{enumerate}
\item[(i)] Pick any two distinct covers $g:C\to B$ and $h:D\to B$ from $S$ and go to (ii).

\item[(ii)] Find a pullback $C\stackrel{r}{\leftarrow }V\stackrel{s}{\rightarrow} D$ of $C\stackrel{g}{\rightarrow }B\stackrel{h}{\leftarrow} D$ and go to (iii).

\item[(iii)] Choose an $i\in \Sigma(V)$. Reset $S$ to be $S\cup\{g\circ r\circ i\}\setminus\{g,h\}$.
\end{enumerate}

\item[(5)] Output $f\circ g$ where $g$ is the sole cover in $S$.
\end{enumerate}

The correctness of Algorithm $\mathds R$ relies on one more hypothesis:

\setlist[enumerate,1]{leftmargin=2.7em}
\begin{enumerate}
\item[{\rm (G5)}] For each object $U$ of $\C$, each arrow in the set $\Sigma(U)$ is monic.
\end{enumerate}

\noindent It is worth pointing out that the categories in Examples~\ref{ex:field}--\ref{ex:variety} all satisfy (G5).

In both algorithms each step is practically doable under the assumptions (A1)--(A3).
In the next two subsections we show that the correctness of Algorithm $\mathds I$ under these assumptions followed by the correctness of Algorithm $\mathds R$ under the extra condition (G5). As a by-product, it is shown that the Galois closures of a given cover $f$ exist and are unique up to isomorphism.
Moreover, the degree of a Galois closure $g$ of $f$ is at most $(\deg f)!$, and if (G5) holds then $\deg g$ divides $(\deg f)!$.

For Example \ref{ex:field}, (A1) refers to the tensor product of two fields over a base field; (A2) refers to the factorization of polynomials over fields;
(A3) amounts to the evaluation of the degrees of irreducible polynomials over fields. Clearly (A3) is easily implemented. If $E$ is the field of rational numbers $\mathbb Q$  or a finite field, then (A1) and (A2) hold by \cite[Chapters 3 and 4]{Cohen:96} and \cite[Chapter 14]{Gathen:2013}, respectively. In these cases the Algorithms $\mathds I$ and $\mathds R$ are correct. The algorithm given in the Introduction agrees with Algorithm $\mathds I$. However, if $E$ is chosen to be an infinite extension of $\mathbb Q$, such as the one described in \cite[\S1]{Frohlich:55}, then (A2) does not hold and it is even unable to determine the irreducibility of polynomials over $E$  in finite time.

For Example~\ref{ex:graph}, (A1) refers to the fiber product of two unramified covering maps of a connected graph; (A2) refers to the identification of the connected components of a graph; (A3) means the evaluation of the degrees of unramified covering maps of connected graphs. If $G$ is a finite connected graph, then (A1) and (A3) are easily accomplished and (A2) holds by the depth-first search \cite[Chapter 22.5]{MIT}. In \cite[Example 17.5]{Terras:10} Terras described a way to construct a Galois closure of a given unramified covering map of finite graphs using Frobenius automorphisms. Both of our algorithms avoid the use of more sophiscated information like Frobenius automorphisms.

\subsection{Correctness of Algorithm $\mathds I$}\label{sec:algoI}

The aim of this subsection is to prove the correctness of Algorithm $\mathds I$ under (A1)--(A3). As a consequence of  Algorithm $\mathds I$, the Galois closures $g$ of any given cover $f$ exist and are unique up to isomorphism. Moreover we obtain the upper bound $\deg g\leq (\deg f)!$.

Our strategy is to use Theorem \ref{thm:suffgalois} to prove that the output of Algorithm $\mathds I$
is a Galois closure of a given cover $f$. For this, we need to control the sizes of various hom-sets after each iteration. This is done in the following four lemmas, with Lemma \ref{lem:loopinv1} and Lemma \ref{lem:loopinv2} giving the main conclusions and the lemmas preceding each of them providing the key inductive argument.

\begin{lem}\label{lem:increase}
Let $f:B\to A$ and $g:C\to A$ denote two covers. Let
$B\stackrel{p}{\leftarrow} U\stackrel{q}{\rightarrow} C$ be a pullback of $B\stackrel{f}{\rightarrow} A\stackrel{g}{\leftarrow} C$. If $i\in \Sigma(U)$ with $\deg (q\circ i)>1$ then
$$
|\Hom(g\circ q\circ i,f)|\geq |\Hom(g,f)|+1.
$$
\end{lem}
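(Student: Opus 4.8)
The plan is to produce an injection from $\Hom(g,f)$ into $\Hom(g\circ q\circ i,f)$ that misses at least one element, thereby getting the strict inequality $|\Hom(g\circ q\circ i,f)|\geq|\Hom(g,f)|+1$. First I would set $g' = g\circ q\circ i$ and observe that $g'$ is a cover with $f\sqsubseteq g$ implying $f\sqsubseteq g'$ (since $g = g'\circ(\text{nothing})$... more precisely $g\sqsubseteq g'$ is the wrong direction; rather $g' = g\circ(q\circ i)$ gives $g'\sqsubseteq g$, and combined with $f\sqsubseteq g$ we do not immediately get $f\sqsubseteq g'$). So the genuinely available fact is $g'\sqsubseteq g$, hence $\Hom(g',f)$ may a priori be empty; the first real task is to check $f\sqsubseteq g'$. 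For this I would invoke Corollary~\ref{cor:atleast1}: in the pullback $B\stackrel{p}{\leftarrow}U\stackrel{q}{\rightarrow}C$ of $B\stackrel{f}{\rightarrow}A\stackrel{g}{\leftarrow}C$, wait — that corollary is stated for a self-pullback. Instead, the cleaner route is: since $f\sqsubseteq g$, pick $h_0\in\Hom(g,f)$; then $h_0\circ q\circ i\in\Hom(g\circ q\circ i,f)$, so $\Hom(g',f)\neq\emptyset$ and $f\sqsubseteq g'$. Thus the map $(q\circ i)^* : \Hom(g,f)\to\Hom(g',f)$, $k\mapsto k\circ q\circ i$, is defined, and it is injective because $q\circ i$ is epic by (G2-II). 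It remains to exhibit an element of $\Hom(g',f)$ not of the form $k\circ q\circ i$.

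The element I expect to use is $p\circ i$ itself. By Theorem~\ref{thm:homset} applied to the pullback $B\stackrel{p}{\leftarrow}U\stackrel{q}{\rightarrow}C$ of $B\stackrel{f}{\rightarrow}A\stackrel{g}{\leftarrow}C$: this bijection is between $\{j\in\Sigma(U)\mid\deg(q\circ j)=1\}$ and $\Hom(g,f)$ — but our $i$ has $\deg(q\circ i)>1$, so $i$ is not in the source and $p\circ i\circ(q\circ i)^{-1}$ need not make sense. Instead I would argue directly: consider the pullback of $B\stackrel{f}{\rightarrow}A\stackrel{g'}{\leftarrow}\dom(g')$ — no, that introduces a new object. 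The cleanest approach: $f\circ(p\circ i) = g\circ(q\circ i) = g'$ by the pullback square, so $p\circ i\in\Hom(g',f)$. Now I claim $p\circ i\neq k\circ q\circ i$ for every $k\in\Hom(g,f)$: if it were, then since $q\circ i$ is epic we would need... actually $p\circ i$ and $k\circ q\circ i$ both have domain $\dom i$, so epic-ness does not cancel here. Instead, suppose $p\circ i = k\circ q\circ i$ for some $k\in\Hom(g,f)$. Then in the pullback diagram, the pair $(k\circ q\circ i, q\circ i)$ into $(B,C)$... I would show this forces $\deg(q\circ i)=1$ via Lemma~\ref{lem:comp=} or the weak-pullback argument from Proposition~\ref{prop:iso}: the equality $p\circ i = k\circ(q\circ i)$ together with $f\circ(p\circ i)=g\circ(q\circ i)$ and the universal property of the pullback would produce a section of $q\circ i$, making $q\circ i$ an isomorphism, contradicting $\deg(q\circ i)>1$ by Proposition~\ref{prop:iso}.

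The main obstacle I anticipate is the last step — cleanly deriving the contradiction from $p\circ i = k\circ q\circ i$. The key is that $i\in\Sigma(U)$ is (essentially) indecomposable, so there is no room for $\dom i$ to map into $C$ compatibly unless $q\circ i$ splits. Concretely: from $p\circ i = k\circ q\circ i$, the arrow $q\circ i : \dom i\to C$ is a candidate "$u$" so that $(k\circ q\circ i, q\circ i) = (p,q)\circ(q\circ i)$... hmm, the types must be matched carefully. The honest version uses Lemma~\ref{lem:comp=}: form the pullback $C\stackrel{p'}{\leftarrow}U'\stackrel{q'}{\rightarrow}C$ of $C\stackrel{g'}{\rightarrow}A\stackrel{g}{\leftarrow}C$ is not quite it either. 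I would instead track through Theorem~\ref{thm:homset} with $g$ replaced by $g'$: the pullback of $B\stackrel{f}{\rightarrow}A\stackrel{g'}{\leftarrow}\dom g'$ has a component $i'\in\Sigma$ with $p\circ i$ as its associated hom-element, and this $i'$ is distinct from the components giving the image of $(q\circ i)^*$ precisely because $\deg(q\circ i)>1$ separates the "new" fiber-piece from the pulled-back ones. Writing this out carefully — matching the pullback of $f$ along $g'$ against the pullback along $g$ composed with $q\circ i$, and using (G4-II), (G4-III) to see a degree-one component survives that was not there before — is where the bookkeeping lives, and that is the step I would budget the most care for.
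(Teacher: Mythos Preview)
Your overall strategy matches the paper's exactly: inject $\Hom(g,f)$ into $\Hom(g\circ q\circ i,f)$ via $k\mapsto k\circ q\circ i$ (injective since $q\circ i$ is epic by (G2-II)), then exhibit $p\circ i$ as an extra element outside the image. The gap is in the final step, where you need $p\circ i\neq k\circ q\circ i$ for every $k\in\Hom(g,f)$. Your section idea does not work: applying the universal property of the pullback to the pair $(p\circ i,\,q\circ i)$ only recovers the arrow $i$ itself, and no map $C\to\dom i$ splitting $q\circ i$ is produced. Passing to a fresh pullback of $f$ along $g'=g\circ q\circ i$ and comparing components could perhaps be pushed through, but it is heavier than necessary and you have not said how the comparison goes.

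The clean argument, which the paper uses, is the one you circle but do not land on. Theorem~\ref{thm:homset} does more than bound $|\Hom(g,f)|$: its bijection $\Phi$ says every $k\in\Hom(g,f)$ has the explicit form $k=p\circ j\circ(q\circ j)^{-1}$ for a unique $j\in\Sigma(U)$ with $\deg(q\circ j)=1$. Now suppose $p\circ i=k\circ q\circ i$ and set $u=(q\circ j)^{-1}\circ q\circ i$; then $p\circ j\circ u=p\circ i$ and trivially $q\circ j\circ u=q\circ i$, so Lemma~\ref{lem:comp=} forces $i=j$, contradicting $\deg(q\circ i)>1$ while $\deg(q\circ j)=1$. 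So your instinct that Lemma~\ref{lem:comp=} is the right tool was correct; the missing ingredient is invoking the explicit description of $k$ from Theorem~\ref{thm:homset} to manufacture the cover $u:\dom i\to\dom j$ that Lemma~\ref{lem:comp=} requires.
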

\begin{proof}
Consider the set
$$
S=\{j\in \Sigma(U)~|~\deg (q\circ j)=1\}.
$$
By Theorem~\ref{thm:homset} the set $\Hom(g,f)$ exactly consists of $p\circ j\circ (q\circ j)^{-1}$ for all $j\in S$. To prove the lemma, it suffices to show that the $n+1$ covers
$$
p\circ i
 \qquad
\hbox{and}
\qquad
p\circ j\circ (q\circ j)^{-1}\circ q\circ i
\qquad \quad \hbox{for all $j\in S$}
$$
in $\Hom(g\circ q\circ i,f)$ are distinct. Since $q\circ i$ is epic by (G2-II) the latter $n$ covers are pairwise distinct. Suppose that there exists an arrow $j\in S$ such that
$$
p\circ j\circ (q\circ j)^{-1}\circ q\circ i=p\circ i.
$$
Setting $u=(q\circ j)^{-1}\circ q\circ i$, we rewrite the above equality as
$
p\circ j\circ u=p\circ i.
$
On the other hand, the definition of $u$ implies
$
q\circ j\circ u=q\circ i.
$
Therefore $i=j$ by Lemma~\ref{lem:comp=}, which contradicts $\deg (q\circ i)>1$ and $\deg (q\circ j)=1$. Therefore the $n+1$ covers are distinct and the lemma follows.
\end{proof}

\begin{lem}\label{lem:loopinv1}
In {\rm Algorithm $\mathds I$} there holds the loop invariant:
\begin{gather*}\label{e:algoI}
|\Hom(g,f)|\geq n
\qquad
\hbox{at the start of the $n$th iteration.}
\end{gather*}
\end{lem}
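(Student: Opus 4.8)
The plan is to establish the invariant by induction on the iteration counter $n$, in the standard style of a loop-invariant argument (initialization and maintenance, cf. \cite[Chapter 2]{MIT}).

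For initialization: at the start of the first iteration we have $n=1$ and $g$ is still the input cover $f:B\to A$, so $\Hom(g,f)=\Hom(f,f)\ni 1_B$ and therefore $|\Hom(g,f)|\geq 1=n$. This settles the base case.

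For maintenance, I would assume the invariant holds at the start of some $n$th iteration, with the current cover $g:C\to A$ satisfying $|\Hom(g,f)|\geq n$, and consider the case in which the algorithm does not halt during this iteration (if it halts at step (1) or step (3), the loop never reaches an $(n+1)$st iteration and there is nothing further to check). In that case step (2) produces a pullback $B\stackrel{p}{\leftarrow} U\stackrel{q}{\rightarrow} C$ of $B\stackrel{f}{\rightarrow}A\stackrel{g}{\leftarrow} C$ together with $\Sigma(U)$; non-termination at step (3) means precisely that some $i\in\Sigma(U)$ has $\deg(q\circ i)>1$, and step (4) selects such an $i$; step (5) then replaces $g$ by $g':=g\circ q\circ i$ and $n$ by $n+1$. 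I would first observe that $q\circ i$ is an arrow of $\D$ — its domain $\dom i$ lies in $\D$ by (G3) and its codomain $C$ lies in $\D$ — hence so is the composite $g'=g\circ q\circ i$; thus $g'$ is again a cover and Lemma~\ref{lem:increase} applies to $f$, $g$ and the chosen $i$, yielding
$$
|\Hom(g',f)| \;\geq\; |\Hom(g,f)|+1 \;\geq\; n+1.
$$
This is exactly the asserted invariant at the start of the $(n+1)$st iteration, completing the induction.

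The substance of this lemma is carried entirely by Lemma~\ref{lem:increase}, which supplies the ``$+1$'' jump in the size of the hom-set at each step; the remaining points — that $g'$ is again a cover, so the induction hypothesis can feed into the next stage, and that failure of the step-(3) test is exactly the existence of a usable $i$ with $\deg(q\circ i)>1$ — are routine bookkeeping. I therefore do not anticipate any real obstacle.
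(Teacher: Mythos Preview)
Your proof is correct and follows essentially the same approach as the paper: induction on $n$, with the base case handled by $1_B\in\Hom(f,f)$ and the inductive step supplied by Lemma~\ref{lem:increase}. The paper's version is terser, omitting the bookkeeping you spell out about non-termination and $g'$ being a cover, but the argument is the same.
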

\begin{proof}
Proceed by induction on $n$. Initially the cover $g$ is assigned to be $f$. Since $1_B\in \Hom(f,f)$ the estimate holds for $n=1$. In general, the estimate follows from the induction hypothesis and Lemma \ref{lem:increase}.
\end{proof}

\begin{lem}\label{lem:bound}
Let $f:B\to A$ and $g:C\to A$ denote two covers.
Let $B\stackrel{p}{\leftarrow}U\stackrel{q}{\rightarrow} C$ denote a pullback of $B\stackrel{f}{\rightarrow} A\stackrel{g}{\leftarrow} C$. If $h$ is a cover with $|\Hom(h,f)|=\deg f$ and $|\Hom(h,g)|=\deg g$, then
$$
|\Hom(h, g\circ q\circ i)|=\deg (g\circ q\circ i)
$$
for each $i\in \Sigma(U)$.
\end{lem}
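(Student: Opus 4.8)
The plan is to derive the lemma from the ``only if'' half of Corollary~\ref{cor:homC}, applied not to a cover but to the composite arrow $f\circ p\colon U\to A$ of $\C$, whose codomain $A$ lies in $\D$. Write $h\colon E\to A$ (the two hypotheses force $\cod h=A$, since they assert the hom-sets in question are nonempty). The first, routine step is to read off from the pullback square the identities
\[
f\circ p=g\circ q,\qquad (f\circ p)\circ i=g\circ q\circ i\ \ (i\in\Sigma(U)),
\]
together with $\deg p=\deg g$ from (G4-III) and therefore $\deg(f\circ p)=\deg f\cdot\deg p=\deg f\cdot\deg g$ from (G4-I).

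The heart of the argument is a bijection $\Hom(h,f\circ p)\longrightarrow\Hom(h,f)\times\Hom(h,g)$ sending $u$ to $(p\circ u,\,q\circ u)$. I would check well-definedness from $f\circ(p\circ u)=(f\circ p)\circ u=h$ and $g\circ(q\circ u)=(g\circ q)\circ u=(f\circ p)\circ u=h$; injectivity from the uniqueness clause of the universal property of the pullback $B\stackrel{p}{\leftarrow}U\stackrel{q}{\rightarrow}C$; and surjectivity by observing that any pair $(k_1,k_2)$ with $f\circ k_1=h=g\circ k_2$ satisfies $f\circ k_1=g\circ k_2$, hence factors (uniquely) through $U$, and the resulting comparison arrow $u$ obeys $(f\circ p)\circ u=f\circ k_1=h$. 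Since $|\Hom(h,f)|=\deg f$ and $|\Hom(h,g)|=\deg g$ by hypothesis, this yields
\[
|\Hom(h,f\circ p)|=|\Hom(h,f)|\cdot|\Hom(h,g)|=\deg f\cdot\deg g=\deg(f\circ p).
\]

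To conclude, Corollary~\ref{cor:homC} applied to the cover $h$ and the $\C$-arrow $f\circ p$ gives $|\Hom(h,f\circ p)|\le\deg(f\circ p)$, with equality only if $|\Hom(h,(f\circ p)\circ i)|=\deg((f\circ p)\circ i)$ for each $i\in\Sigma(U)$. We have just established that equality, so $|\Hom(h,(f\circ p)\circ i)|=\deg((f\circ p)\circ i)$ for every $i\in\Sigma(U)$; rewriting $(f\circ p)\circ i=g\circ q\circ i$ gives the claim for all $i\in\Sigma(U)$, in particular for the given one.

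The main obstacle, and the only genuinely non-formal point, is the bijection in the second paragraph: one must invoke the universal property of the pullback in the ambient category $\C$ — where it exists by (G1) — rather than in $\D$, and verify that the comparison arrow it produces really lands in $\Hom(h,f\circ p)$ and not merely among arrows into $U$. Everything else is bookkeeping with the degree function via (G4). It is worth noting that the two hypotheses enter only through their product $\deg f\cdot\deg g$, which is exactly what the bijection converts into $|\Hom(h,f\circ p)|$.
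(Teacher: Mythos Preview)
Your proof is correct. The paper's argument is essentially the same computation unfolded: it considers the set $S=\{(i,k):i\in\Sigma(U),\ k\in\Hom(h,g\circ q\circ i)\}$, exhibits a surjection $S\to\Hom(h,f)\times\Hom(h,g)$ via the universal property of the pullback, and then uses Theorem~\ref{thm:homset} together with Lemma~\ref{lem:pullback} to squeeze each summand $|\Hom(h,g\circ q\circ i)|$ up to $\deg(g\circ q\circ i)$. Your route is slightly more modular: you first isolate the bijection $\Hom(h,f\circ p)\cong\Hom(h,f)\times\Hom(h,g)$ (which is exactly the universal property of the pullback read off on hom-sets), and then delegate the summand-by-summand counting to Corollary~\ref{cor:homC}. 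Since the proof of Corollary~\ref{cor:homC} is precisely that counting, the two arguments coincide once unfolded; your version has the minor advantage of packaging the pullback bijection as a standalone fact and reusing an existing corollary rather than reproving its content inline.
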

\begin{proof}
Consider the set
$$
S=\{(i,k)~|~i\in \Sigma(U) ~\hbox{and}~k\in \Hom(h,g\circ q\circ i)\}.
$$
We estimate $|S|$ in two different ways. First, by construction we have
$$
|S|=\sum_{i\in \Sigma(U)} |\Hom(h,g\circ q\circ i)|.
$$
Second, since $f\circ p=g\circ q$ there is a map $\phi:S\to \Hom(h,f)\times \Hom(h,g)$ given by
$$
(i,k)
\quad \mapsto \quad
(p\circ i\circ k,q\circ i\circ k)
\qquad \quad
\hbox{for all $(i,k)\in S$}.
$$
We now show that $\phi$ is surjective. Write $h:D\to A$.
Let $r\in \Hom(h,f)$ and $s\in \Hom(h,g)$ be given. By the universal property of $B\stackrel{p}{\leftarrow}U\stackrel{q}{\rightarrow} C$ there exists a unique arrow $u:D\to U$ such that the diagram
\begin{table}[H]
\centering
\begin{tikzpicture}[descr/.style={fill=white}]
\matrix(m)[matrix of math nodes,
row sep=2.6em, column sep=2.8em,
text height=1.5ex, text depth=0.25ex]
{
D
\\
&U
&C
\\
&B
&A
\\
};
\path[->,font=\scriptsize,>=angle 90]
(m-1-1) edge[bend left] node[above] {$s$} (m-2-3)
        edge[bend right] node[left] {$r$} (m-3-2)
(m-2-2) edge node[above] {$q$} (m-2-3)
        edge node[left] {$p$} (m-3-2)
(m-2-3) edge node[right] {$g$} (m-3-3)
(m-3-2) edge node[below] {$f$} (m-3-3);
\path[->,dashed,font=\scriptsize,>=angle 90]
(m-1-1) edge node[descr] {$u$} (m-2-2);
\end{tikzpicture}
\end{table}
\noindent commutes. By (G3) there exists a unique arrow $i\in \Sigma(U)$ with $\Hom(u,i)\not=\emptyset$. Pick any $k\in \Hom(u,i)$. Note that $k \in \Hom(h, g \circ q \circ i)$ and $\phi$ sends $(i,k)$ to $(r,s)$. This proves our claim.
Therefore
$|S|\geq |\Hom(h,f)|\times|\Hom(h,g)|$.
As $|\Hom(h, f)| = \deg f$ and $|\Hom(h, g)| = \deg g$ by hypotheses, we get
$$
\sum_{i\in \Sigma(U)} |\Hom(h,g\circ q\circ i)|=|S|\geq \deg f\cdot \deg g.
$$
Theorem~\ref{thm:homset} implies that
$|\Hom(h,g\circ q\circ i)|
\leq \deg (g\circ q\circ i)$ for all $i\in \Sigma(U)$.
Therefore
$$
\sum_{i\in \Sigma(U)}\deg (g\circ q\circ i)
\geq
\deg f\cdot \deg g.
$$
But the left-hand side of the above inequality is equal to $\deg f\cdot \deg g$ by (G4-I) and Lemma~\ref{lem:pullback}.
This shows that $|\Hom(h,g\circ q\circ i)|= \deg (g\circ q\circ i)$ for each $i\in \Sigma(U)$.
\end{proof}

\begin{lem}\label{lem:loopinv2}
In {\rm Algorithm $\mathds I$} the following loop invariant holds for any cover $h$ with $|\Hom(h,f)|=\deg f$:
$$
|\Hom(h,g)|=\deg g
\qquad
\hbox{at the start of each iteration}.
$$
\end{lem}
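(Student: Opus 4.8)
The plan is to establish the loop invariant by induction on the iteration index $n$. Fix throughout a cover $h$ with $|\Hom(h,f)|=\deg f$; observe that this hypothesis already forces $\Hom(h,f)\neq\emptyset$, i.e.\ $f\sqsubseteq h$.

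\emph{Base case.} At the start of the first iteration, Algorithm $\mathds I$ has assigned $g$ to be the input cover $f$. Hence $|\Hom(h,g)|=|\Hom(h,f)|=\deg f=\deg g$, so the invariant holds for $n=1$.

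\emph{Inductive step.} Assume the invariant holds at the start of the $n$th iteration, so that the current cover $g$ satisfies $|\Hom(h,g)|=\deg g$. If the algorithm terminates during this iteration --- either at step (1) because $n\geq\deg f-1$, or at step (3) because $\deg(q\circ i)=1$ for all $i\in\Sigma(U)$ --- then there is no $(n+1)$st iteration and nothing to prove. Otherwise, steps (2)--(5) produce a pullback $B\stackrel{p}{\leftarrow}U\stackrel{q}{\rightarrow}C$ of $B\stackrel{f}{\rightarrow}A\stackrel{g}{\leftarrow}C$, select an $i\in\Sigma(U)$ with $\deg(q\circ i)>1$, and replace $g$ by $g\circ q\circ i$. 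To conclude, I would invoke Lemma~\ref{lem:bound} with this pullback and this $h$: the two hypotheses of that lemma, namely $|\Hom(h,f)|=\deg f$ and $|\Hom(h,g)|=\deg g$, hold by the choice of $h$ and by the inductive hypothesis respectively. The lemma then gives $|\Hom(h,g\circ q\circ i)|=\deg(g\circ q\circ i)$. Since $g\circ q\circ i$ is exactly the value of $g$ at the start of the $(n+1)$st iteration, the invariant is preserved, completing the induction.

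The only point demanding care is the bookkeeping of termination: one must note that steps (1) and (3) can only \emph{end} the loop, so the invariant needs to be verified precisely when control returns to step (1) with $n$ increased by one, which is the case handled above. No genuine obstacle arises in the present lemma --- the substantive content lies entirely in Lemma~\ref{lem:bound} (proved earlier via a double count of the pairs $(i,k)$ with $i\in\Sigma(U)$ and $k\in\Hom(h,g\circ q\circ i)$), which this argument simply feeds with the inductive hypothesis.
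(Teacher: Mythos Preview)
Your proof is correct and follows exactly the same approach as the paper: induction on the iteration number, with the base case handled by $g=f$ and the inductive step reduced to a direct application of Lemma~\ref{lem:bound}. Your version merely spells out the termination bookkeeping more explicitly than the paper does.
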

\begin{proof}
Proceed by induction on the number of iterations. Initially, the cover $g$ is assigned to be $f$. Therefore the equality holds for the first iteration. In general, the equality follows from the induction hypothesis and Lemma \ref{lem:bound}.
\end{proof}

We are now ready to prove

\setlist[enumerate,1]{leftmargin=2em}
\begin{thm}\label{thm:algoI}
\begin{enumerate}
\item {\rm Algorithm $\mathds I$} terminates if {\rm (A1)--(A3)} hold.

\item The output $g$ of {\rm Algorithm $\mathds I$} is a Galois closure of the input $f$.
\end{enumerate}
\noindent Consequently {\rm Algorithm $\mathds I$} is correct if {\rm (A1)--(A3)} hold.
\end{thm}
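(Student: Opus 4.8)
The plan is to assemble the machinery already in place: Theorem~\ref{thm:suffgalois} reduces the problem to exhibiting a \emph{minimal} cover for the property $|\Hom(\cdot,f)|=\deg f$, and Lemmas~\ref{lem:loopinv1} and \ref{lem:loopinv2} are exactly the loop invariants one needs to certify that the output of Algorithm~$\mathds I$ is such a cover. So the theorem is essentially a matter of bookkeeping around those inputs, with one short arithmetic observation supplied along the way.

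First I would dispatch part~(1), termination. The counter $n$ starts at $1$ and is incremented only in step~(5), which is reached only after step~(1) has been passed with $n<\deg f-1$; hence the loop body runs fewer than $\deg f$ times and the algorithm halts through step~(1) or step~(3). Each pass through the body is finite under (A1)--(A3): step~(2) builds a pullback by (A1) and the set $\Sigma(U)$ by (A2), and this set is finite because $\deg f=\sum_{i\in\Sigma(U)}\deg(q\circ i)$ by Lemma~\ref{lem:pullback}; steps~(3)--(5) then only evaluate $\deg$ on the finitely many arrows $q\circ i$, which is (A3).

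Next I would verify that the returned cover $g:C\to A$ satisfies $f\sqsubseteq g$ and $|\Hom(g,f)|=\deg f$. The relation $f\sqsubseteq g$ is a loop invariant: it holds at the start ($g=f$), and after step~(5) the pullback identity $f\circ p=g\circ q$ gives $p\circ i\in\Hom(g\circ q\circ i,f)$. For the cardinality, pick a pullback $B\stackrel{p}{\leftarrow}U\stackrel{q}{\rightarrow}C$ of $B\stackrel{f}{\rightarrow}A\stackrel{g}{\leftarrow}C$; by Theorem~\ref{thm:homset} and Lemma~\ref{lem:pullback},
\[
\deg f-|\Hom(g,f)|=\sum_{\substack{i\in\Sigma(U)\\ \deg(q\circ i)\geq 2}}\deg(q\circ i),
\]
which is $0$ if no such $i$ exists and at least $2$ otherwise. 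If the algorithm exits through step~(3), then $\deg(q\circ i)=1$ for all $i\in\Sigma(U)$ and the right-hand side is $0$. If it exits through step~(1) (the case $\deg f=1$ is trivial, since then $g=f$ and $|\Hom(f,f)|=1$), then $n=\deg f-1$ at that moment, so Lemma~\ref{lem:loopinv1} gives $|\Hom(g,f)|\geq\deg f-1$, forcing the right-hand side to be $\leq 1$ and hence $0$. Either way $|\Hom(g,f)|=\deg f$.

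Finally, minimality. Let $h$ be any cover with $|\Hom(h,f)|=\deg f$; applying Lemma~\ref{lem:loopinv2} at the start of the iteration in which $g$ is returned yields $|\Hom(h,g)|=\deg g\geq 1$, so $g\sqsubseteq h$. Thus $g$ is least --- in particular minimal --- for the property $|\Hom(\cdot,f)|=\deg f$, so Theorem~\ref{thm:suffgalois} identifies $g$ as a Galois closure of $f$, proving~(2); combining (1) and (2) gives correctness. The one place that requires care is the step~(1) exit, where one must combine the lower bound of Lemma~\ref{lem:loopinv1} with the fact that $|\Hom(g,f)|$ can never equal $\deg f-1$; the rest is straightforward assembly of Lemmas~\ref{lem:increase}--\ref{lem:loopinv2} and Theorem~\ref{thm:suffgalois}.
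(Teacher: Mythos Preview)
Your proof is correct and takes essentially the same approach as the paper: both invoke Lemmas~\ref{lem:loopinv1} and~\ref{lem:loopinv2} to show the output is least for the property $|\Hom(\cdot,f)|=\deg f$, then apply Theorem~\ref{thm:suffgalois}. The only cosmetic difference is in handling the step~(1) exit, where you phrase the key arithmetic as ``the defect $\deg f-|\Hom(g,f)|$ is either $0$ or $\geq 2$'' while the paper counts arrows in $\Sigma(U)$ directly via Lemma~\ref{lem:pullback} and Theorem~\ref{thm:homset}.
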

\begin{proof} (i) We begin with the termination of Algorithm $\mathds I$.
The steps (1), (3) and (4) can be finished in finite time by (A2) and (A3). Step (2) can be carried out by (A1) and (A2). It is clear that step (5) is fulfilled in finite time.
Hence it remains to indicate that the iterative loop in Algorithm $\mathds I$ halts. To see this, note that the initial value of $n$ is $1$ and $n$ is increased by one at step (5) in each iteration, the halting condition
$$
n\geq \deg f-1
$$
at step (1) would be satisfied in the $(\deg f-1)$th iteration if the halting condition at step (3) never holds. Therefore Algorithm $\mathds I$ terminates for all valid inputs.

(ii) We now invoke Theorem \ref{thm:suffgalois} to show that the output $g$ of Algorithm $\mathds I$ is a Galois closure of $f$. First we check that the output $g$ satisfies $|\Hom(g,f)|=\deg f$. This is so if the output is from (3), resulting from Theorem~\ref{thm:homset}.
Now suppose that $g$ is the output from (1). This happens when $n \ge \deg f-1$. Combined with Lemma~\ref{lem:loopinv1} we have
$$
|\Hom(g,f)|\geq \deg f-1.
$$
Denote by $B\stackrel{p}{\leftarrow }U\stackrel{q}{\rightarrow} C$ a pullback of $B\stackrel{f}{\rightarrow} A\stackrel{g}{\leftarrow} C$. By Theorem~\ref{thm:homset} there are at least $\deg f-1$ arrows $i\in \Sigma(U)$ with $\deg (q\circ i)=1$.  By Lemma~\ref{lem:pullback} this forces that there are exactly $\deg f$ arrows $i\in \Sigma(U)$ with $\deg (q\circ i)=1$. Therefore $|\Hom(g,f)|=\deg f$ by Theorem~\ref{thm:homset}. By Lemma~\ref{lem:loopinv2}, for any cover $h$ with $|\Hom(h,f)|=\deg f$ we have $|\Hom(h,g)|=\deg g$ and in particular $g\sqsubseteq h$. We have shown that $g$ is a least cover with respect to the property $|\Hom(g,f)|=\deg f$. By Theorem \ref{thm:suffgalois} the output $g$ is a Galois closure of $f$.
\end{proof}

An immediate consequence of Theorem \ref{thm:algoI} is

\begin{cor}\label{cor:deg<=2}
If $f$ is a cover with $\deg f\leq 2$, then $f$ is a Galois cover.
\end{cor}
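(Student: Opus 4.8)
The plan is to deduce this at once from Theorem~\ref{thm:algoI}(ii), which says that the cover returned by Algorithm~$\mathds I$ on any input $f$ is a Galois closure of $f$; crucially, part (ii) does not rely on (A1)--(A3), those assumptions being needed only for the termination statement in part (i).

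First I would run Algorithm~$\mathds I$ on the input cover $f$ with $\deg f\le 2$. The initialization sets $g=f$ and $n=1$, and step (1) immediately tests whether $n\ge \deg f-1$. Since $\deg f-1\le 1=n$, this test succeeds at the very first check, so the algorithm outputs $g=f$ without ever entering the loop body; in particular termination is automatic and (A1)--(A3) play no role. Then I would invoke Theorem~\ref{thm:algoI}(ii): the output $g=f$ is a Galois closure of $f$. By Definition~\ref{defn:galoisclosure} a Galois closure is in particular a Galois cover, so $f$ is Galois, as desired.

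As an alternative route that bypasses the algorithm, one can argue directly. Let $B\stackrel{p}{\leftarrow}U\stackrel{q}{\rightarrow}B$ be a pullback of $B\stackrel{f}{\rightarrow}A\stackrel{f}{\leftarrow}B$. From $f\circ p=f\circ q$ and (G4-I) one gets $\deg(p\circ i)=\deg(q\circ i)$ for every $i\in\Sigma(U)$, while Lemma~\ref{lem:pullback} gives $\sum_{i\in\Sigma(U)}\deg(q\circ i)=\deg f\le 2$. If $\deg f=1$ the claim is immediate (via Proposition~\ref{prop:iso} together with $1_B\in\Hom(f,f)$). If $\deg f=2$, then either $\Sigma(U)$ has a single element $i$ with $\deg(q\circ i)=2$, or it has two elements each with $\deg(q\circ i)=1$; the former is ruled out by Corollary~\ref{cor:atleast1}, which forces $\deg(p\circ i)=1$ hence $\deg(q\circ i)=1$ for some $i$. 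In either admissible case $\deg(p\circ i)=1$ for all $i\in\Sigma(U)$, so $f$ is Galois by Corollary~\ref{cor:galois}.

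There is essentially no obstacle here: the statement is a corollary by design. The only point that needs care is to make explicit that Theorem~\ref{thm:algoI}(ii) is available without (A1)--(A3), so that invoking it for an arbitrary category $\C$ (with $\D$ satisfying only (G1)--(G4)) is legitimate; in the present situation the loop halts at the first step regardless, which makes this transparent.
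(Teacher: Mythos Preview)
Your primary argument is correct and matches the paper's proof essentially verbatim: run Algorithm~$\mathds I$, observe that step~(1) triggers immediately when $\deg f\le 2$, and then apply Theorem~\ref{thm:algoI}(ii). Your alternative direct route via Corollary~\ref{cor:atleast1} and Corollary~\ref{cor:galois} is also sound and is a pleasant self-contained bonus not present in the paper.
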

\begin{proof}
For the input cover $f$ with $\deg f\leq 2$, Algorithm $\mathds I$ always outputs $f$. So $f$ is its own Galois closure by Theorem \ref{thm:algoI}(ii), hence it is a Galois cover.
\end{proof}

As mentioned in the beginning of \S\ref{sec:algo}, each step in Algorithm $\mathds I$ is theoretically doable under (G1)--(G4). Hence, we can draw the following conclusion from Theorem \ref{thm:algoI}.

\begin{thm}\label{thm:existgc} Let $f$ be a cover.
Then there exists a unique Galois closure of $f$ up to isomorphism.
Moreover the following statements are equivalent:
\begin{enumerate}
\item $g$ is a Galois closure of $f$.

\item $g$ is a least cover with respect to the property that $g$ is Galois and $f\sqsubseteq g$.

\item $g$ is a least cover with respect to the property $|\Hom(g,f)|=\deg f$.

\item $g$ is a minimal cover with respect to the property $|\Hom(g,f)|=\deg f$.

\end{enumerate}
\end{thm}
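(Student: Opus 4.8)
The plan is to reduce the statement to three facts already in hand: Theorem~\ref{thm:suffgalois}, which upgrades minimality for the hom-set condition to ``Galois closure''; Corollary~\ref{cor:char_galois}, which says every Galois cover above $f$ satisfies the hom-set condition; and Theorem~\ref{thm:algoI}, whose proof in fact exhibits a \emph{least} cover for the hom-set condition. Throughout, write $P_1$ for the property ``being Galois and lying above $f$'' (the one in (i) and (ii)) and $P_2$ for the property ``$|\Hom(\cdot,f)|=\deg f$'' (the one in (iii) and (iv)); by Definition~\ref{defn:galoisclosure}, statement (i) says ``$g$ is minimal for $P_1$'', while (ii), (iii), (iv) say ``$g$ is least for $P_1$'', ``$g$ is least for $P_2$'', ``$g$ is minimal for $P_2$'' respectively, and recall that a least cover is automatically minimal.

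First I would record two bridging observations: (a) any cover with $P_1$ has $P_2$, which is the implication (i)$\Rightarrow$(iii) of Corollary~\ref{cor:char_galois}; and (b) any cover minimal for $P_2$ is a Galois closure of $f$ and so in particular has $P_1$, which is Theorem~\ref{thm:suffgalois} (there the hypothesis $f\sqsubseteq g$ is automatic since $\deg f\ge 1$). Next I would invoke Algorithm~$\mathds I$: each of its steps is meaningful under (G1)--(G4), and its loop terminates after at most $\deg f-1$ passes (the counter $n$ starts at $1$, increases by one each pass, and step~(1) halts once $n\ge\deg f-1$), so on input $f$ it returns some cover $g_0$; and the analysis in the proof of Theorem~\ref{thm:algoI}(ii) shows that $g_0$ is a least cover for $P_2$.

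With this stock the equivalences come out quickly. The implications (iv)$\Rightarrow$(i) (this is Theorem~\ref{thm:suffgalois}), (iii)$\Rightarrow$(iv), and (ii)$\Rightarrow$(i) (the last two because least covers are minimal, together with Definition~\ref{defn:galoisclosure} for the latter) are immediate. The implication that needs an argument is (i)$\Rightarrow$(iii): supposing $g$ is a Galois closure, the cover $g_0$ above is also a Galois closure by (iii)$\Rightarrow$(iv)$\Rightarrow$(i), so $g$ and $g_0$ both have $P_1$ and hence both have $P_2$ by (a); as $g_0$ is least for $P_2$ we get $g_0\sqsubseteq g$, and as $g$ is minimal for $P_1$ while $g_0$ has $P_1$ and $g_0\sqsubseteq g$, minimality gives $g\sqsubseteq g_0$; thus $g$ and $g_0$ are isomorphic, whence $g\sqsubseteq g_0\sqsubseteq h$ for every cover $h$ with $P_2$, and since $g$ has $P_2$ it is least for $P_2$, which is (iii). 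Then (i)$\Rightarrow$(ii) follows: $g$ is least for $P_2$ by what was just shown, so for any cover $h$ with $P_1$ observation (a) gives that $h$ has $P_2$, hence $g\sqsubseteq h$; since $g$ has $P_1$, it is least for $P_1$.

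For the first assertion, $g_0$ satisfies (iii) and hence (i), so a Galois closure exists; and if $g$ and $g'$ are both Galois closures then both satisfy (iii), i.e.\ both are least for $P_2$, so $g\sqsubseteq g'$ and $g'\sqsubseteq g$, which is precisely what it means for $g$ and $g'$ to be isomorphic. The only genuine obstacle I anticipate is the circularity hidden in (i)$\Rightarrow$(iii): one cannot pass from the \emph{minimality} of an arbitrary Galois closure to the \emph{leastness} demanded in (iii) by order-theoretic means alone, and it is exactly the concretely produced least $P_2$-cover $g_0$ coming out of Algorithm~$\mathds I$ that breaks the circle; all the rest is routine manipulation of the preorder $\sqsubseteq$.
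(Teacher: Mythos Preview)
Your proof is correct and follows essentially the same approach as the paper: both hinge on the fact that the output $g_0$ of Algorithm~$\mathds I$ is a \emph{least} cover for the property $|\Hom(\cdot,f)|=\deg f$ (established in the proof of Theorem~\ref{thm:algoI}(ii)), and both use Theorem~\ref{thm:suffgalois} for (iv)$\Rightarrow$(i) and Corollary~\ref{cor:char_galois} to pass from $P_1$ to $P_2$. Your write-up is more explicit than the paper's in unpacking (i)$\Rightarrow$(iii) and (i)$\Rightarrow$(ii)---the paper compresses these into ``as mentioned above'' and ``immediate from the uniqueness of Galois closures up to isomorphism''---but the underlying argument is the same comparison of an arbitrary Galois closure $g$ with the concrete $g_0$.
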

\begin{proof}
Theorem~\ref{thm:algoI}(ii) implies the existence of a Galois closure of $f$. The proof of Theorem~\ref{thm:algoI}(ii) actually shows that the output $g$ of Algorithm $\mathds I$ satisfies (iii), which proves the uniqueness of Galois closures of $f$ up to isomorphism.

(ii) $\Rightarrow$ (i): Immediate from Definition \ref{defn:galoisclosure}.

(i) $\Rightarrow$ (ii): Immediate from the uniqueness of Galois closures up to isomorphism.

(i) $\Rightarrow$ (iii): As mentioned above.

(iii) $\Rightarrow$ (iv): Obvious.

(iv) $\Rightarrow$ (i): Immediate from Theorem \ref{thm:suffgalois}.
\end{proof}

We end this subsection with an upper bound of the degree of a Galois closure.

\begin{lem}\label{lem:loopinv4}
In Algorithm $\mathds I$ the following loop invariant holds:
$$
\deg g \leq \prod_{k=1}^n
(\deg f-k+1)
\qquad
\hbox{at the start of the $n$th iteration}.
$$
\end{lem}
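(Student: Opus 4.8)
The plan is to prove this loop invariant by induction on $n$, mirroring the structure of Lemmas~\ref{lem:loopinv1} and \ref{lem:loopinv2}. The base case $n=1$ is immediate: at the start of the first iteration $g$ is assigned to be $f$, so $\deg g = \deg f \leq \deg f$, which matches the empty-product-times-$(\deg f)$ right-hand side $\prod_{k=1}^{1}(\deg f - k + 1) = \deg f$.

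For the inductive step, suppose the bound holds at the start of the $n$th iteration, so $\deg g \leq \prod_{k=1}^{n}(\deg f - k + 1)$, and assume the algorithm proceeds to step (5) in that iteration (otherwise the algorithm terminates and there is no $(n+1)$st iteration). In step (2) we form a pullback $B \stackrel{p}{\leftarrow} U \stackrel{q}{\rightarrow} C$ of $B \stackrel{f}{\rightarrow} A \stackrel{g}{\leftarrow} C$, in step (4) we pick $i \in \Sigma(U)$ with $\deg(q \circ i) > 1$, and in step (5) we reset $g$ to $g \circ q \circ i$. So the new value of $\deg g$ is $\deg g \cdot \deg(q \circ i)$ by (G4-I). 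The key point is to bound $\deg(q \circ i)$ by $\deg f - n$.

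First I would note that $\deg(q \circ i) \leq \deg f$ always, by (G4-III) applied to the pullback (which gives $\deg q = \deg f$) together with (G4-II) or Lemma~\ref{lem:pullback} (since $\deg(q\circ i)$ is one summand in $\sum_{j \in \Sigma(U)} \deg(q \circ j) = \deg f$). To sharpen this to $\deg(q \circ i) \leq \deg f - n$, I would use Lemma~\ref{lem:loopinv1}: at the start of the $n$th iteration $|\Hom(g,f)| \geq n$, and by Theorem~\ref{thm:homset} this means there are at least $n$ arrows $j \in \Sigma(U)$ with $\deg(q \circ j) = 1$. Since $i$ is not among these (as $\deg(q\circ i)>1$), summing via Lemma~\ref{lem:pullback} gives
$$
\deg f = \sum_{j \in \Sigma(U)} \deg(q \circ j) \geq \deg(q \circ i) + n \cdot 1,
$$
so $\deg(q \circ i) \leq \deg f - n$. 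Hence the new degree satisfies
$$
\deg g \cdot \deg(q \circ i) \leq \left(\prod_{k=1}^{n}(\deg f - k + 1)\right)(\deg f - n) = \prod_{k=1}^{n+1}(\deg f - k + 1),
$$
which is exactly the bound at the start of the $(n+1)$st iteration.

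The main subtlety — though not really an obstacle once one thinks of invoking Lemma~\ref{lem:loopinv1} — is establishing $\deg(q\circ i) \leq \deg f - n$ rather than the trivial $\deg(q\circ i)\le \deg f$; the naive bound only yields $\deg g \leq (\deg f)^n$, which is too weak to give the factorial bound $(\deg f)!$ later. Everything else is routine bookkeeping with (G4-I), Lemma~\ref{lem:pullback}, and Theorem~\ref{thm:homset}. One should take momentary care that the lemma is vacuous or trivially true in the iterations where the algorithm would output $g$ at step (1) or (3) rather than continuing, but since the statement only concerns "the start of the $n$th iteration," no issue arises.
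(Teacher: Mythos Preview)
Your proof is correct and follows essentially the same approach as the paper's: induction on the iteration number, with the key bound $\deg(q\circ i)\le \deg f - n$ obtained by combining Lemma~\ref{lem:loopinv1} (at least $n$ arrows $j\in\Sigma(U)$ have $\deg(q\circ j)=1$, via Theorem~\ref{thm:homset}) with the degree sum in Lemma~\ref{lem:pullback}. The only cosmetic difference is indexing (you pass from iteration $n$ to $n+1$, while the paper passes from $n-1$ to $n$).
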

\begin{proof}
We show the loop invariant by induction on $n$.
 At the start of the first iteration the bound holds since $g=f$.
Now assume $n>1$. Consider the moment before executing (5) in the $(n-1)$th iteration.
 By Theorem~\ref{thm:homset}, $|\Hom(g, f)|$ is equal to the number of arrows $j\in \Sigma(U)$ such that $\deg (q \circ j) = 1$, and Lemma \ref{lem:loopinv1} says that this number is at least $n-1$. In step (4), an arrow $i\in \Sigma(U)$ with $\deg (q\circ i)>1$ is picked.
By Lemma~\ref{lem:pullback} it follows that
$$
\deg (q \circ i) \le \deg f - n+1.
$$
By (G4-I) and induction hypothesis, we get
$$
\deg (g \circ q \circ i) = \deg g \cdot \deg (q \circ i) \le (\deg f - n+1) \prod_{k=1}^{n-1}(\deg f-k+1) = \prod_{k=1}^{n}(\deg f-k+1).
$$
In step (5) the cover $g$ is replaced by $g\circ q\circ i$.  Hence the lemma holds.
\end{proof}

\begin{thm}\label{thm:lessthan}
If $g$ is a Galois closure of a cover $f$, then
$\deg g\leq (\deg f)!$.
\end{thm}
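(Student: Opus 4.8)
The plan is to reduce the claim to the specific Galois closure produced by Algorithm~$\mathds I$ and then extract the bound from the degree loop invariant recorded in Lemma~\ref{lem:loopinv4}. By Theorem~\ref{thm:existgc} the Galois closure of $f$ is unique up to isomorphism, so $g$ is isomorphic to the output $g_0$ of Algorithm~$\mathds I$ on input $f$; by Corollary~\ref{cor:iso_cover} isomorphic covers have equal degree, so $\deg g=\deg g_0$ and it suffices to prove $\deg g_0\leq(\deg f)!$. When $\deg f\leq 2$, Algorithm~$\mathds I$ simply outputs $f$ and $\deg f\leq(\deg f)!$ is clear, so I may assume $\deg f\geq 3$.

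Next I would identify the terminating iteration. Since $n$ starts at $1$ and is increased by one in step~(5) of each iteration, the value of $n$ at the start of the $m$th iteration is $m$; hence the test $n\geq\deg f-1$ in step~(1) succeeds at the latest at the start of the $(\deg f-1)$th iteration. Thus the output $g_0$ is emitted during some iteration of index $N\leq\deg f-1$, and $g_0$ coincides with the value of $g$ at the start of that iteration, since $g$ is altered only in step~(5), which is reached after steps~(1) and~(3) where the output occurs.

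Finally I would apply Lemma~\ref{lem:loopinv4} at the start of the $N$th iteration to obtain
$$
\deg g_0\ \leq\ \prod_{k=1}^{N}(\deg f-k+1).
$$
Because $N\leq\deg f-1$, each factor on the right satisfies $\deg f-k+1\geq 2$, so the product is bounded above by $\prod_{k=1}^{\deg f-1}(\deg f-k+1)=\deg f\cdot(\deg f-1)\cdots 2=(\deg f)!$. Together with $\deg g=\deg g_0$ this yields $\deg g\leq(\deg f)!$. The argument presents no genuine difficulty beyond Lemma~\ref{lem:loopinv4} and Theorem~\ref{thm:algoI}/Theorem~\ref{thm:existgc}; the only care needed is the bookkeeping locating the terminating index $N$, the observation that the loop invariant applies to the emitted cover, and the trivial handling of $\deg f\leq 2$.
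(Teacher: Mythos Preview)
Your proof is correct and follows essentially the same approach as the paper: reduce to the output of Algorithm~$\mathds I$ via Theorem~\ref{thm:existgc}, then invoke Lemma~\ref{lem:loopinv4}. The paper's proof is just two sentences and leaves the bookkeeping (identifying the terminating iteration~$N$, verifying the product bound, and the $\deg f\le 2$ case) implicit, whereas you spell these details out explicitly; there is no substantive difference in strategy.
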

\begin{proof}
By Theorem \ref{thm:existgc}, we may assume that $g$ is an output of Algorithm $\mathds I$ with the input $f$. Then the theorem is immediate from Lemma \ref{lem:loopinv4}.
\end{proof}

\subsection{Correctness of Algorithm $\mathds R$}\label{sec:algoR}

In this subsection, we verify the correctness of Algorithm $\mathds R$ under (A1)--(A3) and (G5).

By Corollary \ref{cor:deg<=2} any cover $f$ with $\deg f \le 2$ is Galois, hence we are concerned with covers $f$ with $\deg f \ge 3$. The correctness of Algorithm $\mathds R$ is built inductively on the degree of the input covers. The step (3) in Algorithm $\mathds R$ for $f$ involves the outputs of covers $p \circ i$. The first thing is to verify that these covers do have degrees less than $\deg f$ so that the algorithm is valid.

\begin{lem}\label{lem:recursion}
Let the notation be as in the statement of {\rm Algorithm $\mathds R$}. If $\deg f>2$ then
$$
\deg (p\circ i)<\deg f
\qquad
\hbox{for each $i\in \Sigma(U)$}.
$$
\end{lem}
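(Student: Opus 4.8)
The plan is to invoke Corollary~\ref{cor:atleast1} together with the additive property (G4-II) for the degree function. Recall that $B\stackrel{p}{\leftarrow} U\stackrel{q}{\rightarrow} B$ is a pullback of $B\stackrel{f}{\rightarrow} A\stackrel{f}{\leftarrow} B$, so by Lemma~\ref{lem:pullback} (equivalently by combining (G4-II) and (G4-III)) we have
$$
\deg f=\sum_{i\in \Sigma(U)} \deg (p\circ i).
$$
Thus each summand $\deg(p\circ i)$ is at most $\deg f$, with equality for a given $i$ only if $\Sigma(U)$ consists of that single arrow $i$ and all other summands vanish. Since the degree function takes values in the positive integers, no summand can be zero, so equality $\deg(p\circ i)=\deg f$ would force $|\Sigma(U)|=1$.

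To rule this out, I would apply Corollary~\ref{cor:atleast1}: there exists an arrow $j\in\Sigma(U)$ with $\deg(p\circ j)=1$. If we had $|\Sigma(U)|=1$, then this $j$ would be the unique element of $\Sigma(U)$, so $\deg f=\deg(p\circ j)=1$. This contradicts the hypothesis $\deg f>2$. Hence $|\Sigma(U)|\geq 2$, and consequently for every $i\in\Sigma(U)$ the complementary sum $\sum_{i'\neq i}\deg(p\circ i')$ is a sum of at least one positive integer, so it is at least $1$; therefore $\deg(p\circ i)\leq \deg f-1<\deg f$, as claimed.

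I do not anticipate a serious obstacle here: the argument is a direct consequence of the additivity of degree over $\Sigma(U)$ and the existence of a degree-one component in the self-pullback, both of which are already established. The only point requiring a moment's care is the observation that degrees are strictly positive (part of (G4)), which is what converts "$|\Sigma(U)|\geq 2$" into the strict inequality $\deg(p\circ i)<\deg f$. One could alternatively phrase the whole thing in one line: $\deg(p\circ i)=\deg f-\sum_{i'\neq i}\deg(p\circ i')\leq \deg f-\deg(p\circ j)=\deg f-1$ for any $j\in\Sigma(U)\setminus\{i\}$, after noting via Corollary~\ref{cor:atleast1} and $\deg f>2$ that such a $j$ distinct from $i$ exists whenever $i$ is the (putative) degree-$\deg f$ arrow.
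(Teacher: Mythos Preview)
Your proof is correct and follows essentially the same route as the paper: invoke Lemma~\ref{lem:pullback} to write $\deg f=\sum_{i\in\Sigma(U)}\deg(p\circ i)$, use Corollary~\ref{cor:atleast1} to produce a degree-one summand, and conclude from $\deg f>2$ and positivity of degrees that $|\Sigma(U)|\geq 2$, hence each $\deg(p\circ i)<\deg f$. The paper's version is simply a more compressed rendition of the same argument.
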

\begin{proof}
By Corollary \ref{cor:atleast1} there exists an arrow $j \in \Sigma(U)$ with $\deg (p \circ j)=1$. On the other hand, Lemma~\ref{lem:pullback} implies
$$
\sum_{i\in \Sigma(U)}\deg (p\circ i)=\deg f > 2,
$$
from which it follows that $|\Sigma(U)| \geq 2$ and hence $
\deg (p\circ i)<\deg f$ for all $i\in \Sigma(U)$.
\end{proof}

Similar to what we did for Algorithm $\mathds I$, in the next two lemmas, we control the sizes of various hom-sets occurring in Algorithm $\mathds R$.

\begin{lem}\label{lem:loopinv3}
Let the notation be as in the statement of {\rm Algorithm $\mathds R$}. Assume that $\mathds R(p\circ i)$ is a Galois closure of $p\circ i$ for each $i\in \Sigma(U)$. Then,
for any cover $k$ with $|\Hom(k,p\circ i)|=\deg (p\circ i)$ for each $i\in \Sigma(U)$, the while loop {\rm (4)} in {\rm Algorithm $\mathds R$} maintains the loop invariant:
\begin{gather*}
|\Hom(k,g)|=\deg g
\qquad
\hbox{for each $g\in S$}
\end{gather*}
at the start of each loop iteration.
\end{lem}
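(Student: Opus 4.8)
The plan is to argue by induction on the number of iterations of the while loop (4), with the ``loop invariant'' being the stated equality $|\Hom(k,g)|=\deg g$ for every $g\in S$. For the base case, note that at the entry of the while loop we have $S=\{\mathds R(p\circ i)\mid i\in\Sigma(U)\}$. Fix $i\in\Sigma(U)$ and write $g=\mathds R(p\circ i)$. By hypothesis $g$ is a Galois closure of $p\circ i$, so by Theorem~\ref{thm:existgc} it is a least cover with respect to the property $|\Hom(g,p\circ i)|=\deg(p\circ i)$; in particular $p\circ i\sqsubseteq g$. We also assumed $|\Hom(k,p\circ i)|=\deg(p\circ i)$, which by Theorem~\ref{thm:homset} means $k$ attains the maximal hom-set size over $p\circ i$, so $p\circ i\sqsubseteq k$. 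Since $g$ is Galois and $p\circ i\sqsubseteq g$, Corollary~\ref{cor:char_galois}(iii) applies once we know $g\sqsubseteq k$: indeed, because $|\Hom(k,p\circ i)|=\deg(p\circ i)$ and $g$ is least with this property among covers of $p\circ i$, we get $g\sqsubseteq k$. Then Corollary~\ref{cor:char_galois}((i)$\Rightarrow$(ii)) with the Galois cover $g$ and the cover $k$ (satisfying $g\sqsubseteq k$) yields $|\Hom(k,g)|=\deg g$. This establishes the invariant at the start of the first loop iteration.

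For the inductive step, suppose the invariant holds at the start of some iteration, and consider one pass through (4)(i)--(iii). We pick distinct $g:C\to B$ and $h:D\to B$ from $S$, form a pullback $C\stackrel{r}{\leftarrow}V\stackrel{s}{\rightarrow}D$ of $C\stackrel{g}{\rightarrow}B\stackrel{h}{\leftarrow}D$, choose $i\in\Sigma(V)$, and replace $g,h$ in $S$ by the single cover $g\circ r\circ i$. The covers in $S$ other than $g,h$ are untouched, so the invariant persists for them; the only thing to verify is $|\Hom(k,g\circ r\circ i)|=\deg(g\circ r\circ i)$. By the inductive hypothesis $|\Hom(k,g)|=\deg g$ and $|\Hom(k,h)|=\deg h$. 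Now apply Lemma~\ref{lem:bound} with the roles ``$f$''$\mapsto g$, ``$g$''$\mapsto h$, ``$h$''$\mapsto k$, using the pullback $C\stackrel{r}{\leftarrow}V\stackrel{s}{\rightarrow}D$ of $C\stackrel{g}{\rightarrow}B\stackrel{h}{\leftarrow}D$: the hypotheses $|\Hom(k,g)|=\deg g$ and $|\Hom(k,h)|=\deg h$ give $|\Hom(k,g\circ r\circ j)|=\deg(g\circ r\circ j)$ for every $j\in\Sigma(V)$, in particular for $j=i$. This is exactly what we needed, completing the induction.

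I expect the main obstacle to lie in the base case rather than the inductive step: one must carefully chase the defining universal property of the Galois closure $\mathds R(p\circ i)$ through Theorem~\ref{thm:existgc} and Corollary~\ref{cor:char_galois} to conclude $g\sqsubseteq k$ from the two hypotheses ``$g$ is a Galois closure of $p\circ i$'' and ``$|\Hom(k,p\circ i)|=\deg(p\circ i)$''. The inductive step, by contrast, is essentially a direct invocation of Lemma~\ref{lem:bound}, which was tailor-made for this purpose (it is the exact analogue of Lemma~\ref{lem:loopinv2} for Algorithm~$\mathds I$). One should also double-check that (G5) is not secretly needed here; it appears to enter only later, when one shows the $g\circ r\circ i$ produced by the loop is itself a cover of the right degree and that the output $f\circ g$ is Galois, so Lemma~\ref{lem:loopinv3} itself should go through under (G1)--(G4) alone together with the stated Galois-closure assumption on the $\mathds R(p\circ i)$.
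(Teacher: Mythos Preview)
Your proof is correct and follows essentially the same approach as the paper: induction on the number of loop iterations, with the base case obtained from Theorem~\ref{thm:existgc}(iii) (giving $\mathds R(p\circ i)\sqsubseteq k$) together with Corollary~\ref{cor:char_galois} (i)$\Rightarrow$(ii), and the inductive step handled by a direct application of Lemma~\ref{lem:bound}. The stray reference to Corollary~\ref{cor:char_galois}(iii) in your base case is a slip---the implication you actually use (and correctly cite a line later) is (i)$\Rightarrow$(ii)---but this does not affect the argument.
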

\begin{proof}
Proceed by induction on the number of loop iterations.
By Theorem \ref{thm:existgc}(iii) we have $\mathds R(p\circ i)\sqsubseteq k$ for all $i\in \Sigma(U)$.
Applying Corollary \ref{cor:char_galois} to $\mathds R(p\circ i)$ we have
$$
|\Hom(k,\mathds R(p\circ i))|=\deg \mathds R(p\circ i)
\qquad
\hbox{for all $i\in \Sigma(U)$}.
$$
Hence the lemma holds for the first loop.
In general the lemma follows from the induction hypothesis and Lemma \ref{lem:bound}.
\end{proof}

\begin{lem}\label{lem:indmap2}
Let $f:B\to A$ and $g:C\to A$ denote two covers. Let $B\stackrel{p}{\leftarrow}{U}\stackrel{q}{\rightarrow}C$ denote a pullback of $B\stackrel{f}{\rightarrow}{A}\stackrel{g}{\leftarrow}C$. If $k$ is a cover such that  $g\sqsubseteq f\circ k$, then
$$
|\Hom(f\circ k,g)|\leq |\Hom(k,p)|.
$$
\end{lem}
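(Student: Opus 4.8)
The plan is to construct an explicit injection $\Psi\colon\Hom(f\circ k,g)\to\Hom(k,p)$ directly from the universal property of the pullback. Write $k\colon D\to B$, so that $f\circ k\colon D\to A$ and an element of $\Hom(f\circ k,g)$ is an arrow $h\colon D\to C$ with $g\circ h=f\circ k$. Since the pullback square gives $f\circ p=g\circ q$ and we have $f\circ k=g\circ h$, the universal property of $B\stackrel{p}{\leftarrow}U\stackrel{q}{\rightarrow}C$ produces a unique arrow $t\colon D\to U$ with $p\circ t=k$ and $q\circ t=h$; I would set $\Psi(h)=t$. The identity $p\circ t=k$ is precisely the statement $t\in\Hom(k,p)$, so $\Psi$ is well defined with the advertised target.

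Injectivity is then immediate: if $\Psi(h_1)=\Psi(h_2)=t$, composing with $q$ on the left and using $q\circ t=h_i$ gives $h_1=h_2$. Counting, this yields $|\Hom(f\circ k,g)|\le|\Hom(k,p)|$. In fact $\Psi$ is a bijection, since for any $t\in\Hom(k,p)$ the arrow $q\circ t$ lies in $\Hom(f\circ k,g)$ — because $g\circ(q\circ t)=f\circ(p\circ t)=f\circ k$ — and is sent by $\Psi$ back to $t$ by the uniqueness clause; but since only the inequality is needed downstream, I would record just that.

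I do not expect a genuine obstacle. The only points requiring a little care are (i) invoking the universal property of the pullback — which exists in $\C$ by (G1) — against the test object $D=\dom k$, which is legitimate because $D$ lies in $\D\subseteq\C$; and (ii) keeping track that the component recovering $k$ is $p\circ t$ while the component recovering $h$ is $q\circ t$, so that $\Psi$ genuinely lands in $\Hom(k,p)$ rather than in some hom-set involving $q$. The hypothesis $g\sqsubseteq f\circ k$ is used only to make the symbol $\Hom(f\circ k,g)$ meaningful (equal codomains), and, implicitly, to ensure it is nonempty; the bound itself holds regardless.
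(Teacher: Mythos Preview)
Your proposal is correct and is essentially identical to the paper's own proof: both construct the map $\Hom(f\circ k,g)\to\Hom(k,p)$ via the universal property of the pullback and check injectivity by post-composing with $q$. Your additional remark that the map is in fact a bijection is correct and harmless, though not used.
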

\begin{proof}
Let $D=\dom k$.
For each $\ell\in \Hom(f\circ k,g)$ the universal property of $B\stackrel{p}{\leftarrow}{U}\stackrel{q}{\rightarrow}C$ implies the existence of a unique arrow $\phi(\ell):D\to U$ such that the diagram
\begin{table}[H]
\centering
\begin{tikzpicture}[descr/.style={fill=white}]
\matrix(m)[matrix of math nodes,
row sep=2.6em, column sep=2.8em,
text height=1.5ex, text depth=0.25ex]
{
D
&
&
\\
&U
&C
\\
&B
&A\\
};
\path[->,font=\scriptsize,>=angle 90]
(m-2-2) edge node[above] {$q$} (m-2-3)
        edge node[left]  {$p$} (m-3-2)
(m-2-3) edge node[right] {$g$} (m-3-3)
(m-3-2) edge node[below] {$f$} (m-3-3)
(m-1-1) edge[bend left] node[above] {$\ell$} (m-2-3)
        edge[bend right] node[left] {$k$} (m-3-2);
 \path[->,dashed,font=\scriptsize,>=angle 90]
(m-1-1) edge node[descr] {$\phi(\ell)$} (m-2-2);
\end{tikzpicture}
\end{table}
\noindent commutes. This induces a map $\phi:\Hom(f\circ k,g)\to \Hom(k,p)$ that sends $\ell$ to $\phi(\ell)$ for all $\ell\in \Hom(f\circ k,g)$. Observe that the map $\phi$ satisfies the property:
$$
\ell=q\circ \phi(\ell)
\qquad
\hbox{for all $\ell\in \Hom(f\circ k,g)$}.
$$
The property implies that $\phi$ is injective:
For any $\ell,\ell'\in \Hom(f\circ k,g)$ with $\phi(\ell)=\phi(\ell')$ we have
$$
\ell=q\circ \phi(\ell)=q\circ \phi(\ell')=\ell'.
$$
By the injectivity of $\phi$ the lemma follows.
\end{proof}

\begin{thm}\label{thm:algoR}
\begin{enumerate}
\item {\rm Algorithm $\mathds R$} terminates if {\rm (A1)--(A3)} hold.

\item The output of {\rm Algorithm $\mathds R$} is a Galois closure of the input $f$ if {\rm (G5)} holds.
\end{enumerate}
\noindent Consequently {\rm Algorithm $\mathds R$} is correct if {\rm (A1)--(A3)} and {\rm (G5)} hold.
\end{thm}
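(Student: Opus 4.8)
The plan is to establish (i) and (ii) separately, each by strong induction on $\deg f$, and then combine them; the inductive step of (ii) is the only place where (G5) is genuinely needed. For (i): if $\deg f\le 2$ the algorithm stops at step~(1), so suppose $\deg f\ge 3$. By (A1) and (A2) the pullback $U$ and the finite set $\Sigma(U)$ are produced in finite time, and by Lemma~\ref{lem:recursion} every $p\circ i$ has $\deg(p\circ i)<\deg f$; hence the inductive hypothesis makes each recursive call $\mathds R(p\circ i)$ in step~(3) halt, and $S$ is a finite set. In the while loop~(4) every pass removes two covers from $S$ and inserts one, so $|S|$ strictly decreases and the loop halts after fewer than $|\Sigma(U)|$ passes; each pass uses only a pullback, the set $\Sigma(V)$ and a composition, all finite by (A1)--(A3). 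Step~(5) is one composition. So Algorithm $\mathds R$ terminates.

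For (ii), the base case $\deg f\le 2$ is Corollary~\ref{cor:deg<=2}, since then the output $f$ is Galois and hence its own Galois closure. Assume $\deg f\ge 3$, and let $g_0$ be the sole element of $S$ when step~(5) is reached, so $\mathds R(f)=f\circ g_0$. By Lemma~\ref{lem:recursion} and the inductive hypothesis, $\mathds R(p\circ i)$ is a Galois closure of $p\circ i$ for each $i\in\Sigma(U)$, hence least with respect to the property $|\Hom(\,\cdot\,,p\circ i)|=\deg(p\circ i)$ by Theorem~\ref{thm:existgc}(iii). Since each pass of the while loop replaces $g$ and $h$ by a cover $g\circ r\circ i$ lying above both (via $r\circ i$ and $s\circ i$, using $g\circ r=h\circ s$), an induction on the number of passes shows that $g_0$ lies above every $\mathds R(p\circ i)$, hence above every $p\circ i$; composing with the swap automorphism $\sigma$ of the pullback $B\stackrel{p}{\leftarrow}U\stackrel{q}{\rightarrow}B$ of $B\stackrel{f}{\rightarrow}A\stackrel{f}{\leftarrow}B$ — whose action on $\Sigma(U)$ is well defined by (G3), with the relevant factoring arrows invertible by (G5) — one gets $q\circ i\cong p\circ(\sigma\cdot i)\sqsubseteq\mathds R(p\circ(\sigma\cdot i))\sqsubseteq g_0$, so Lemma~\ref{lem:indmap} yields $|\Hom(g_0,q\circ i)|=\deg(q\circ i)$ for every $i\in\Sigma(U)$.

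The crux of the proof is the identity $|\Hom(f\circ h,f)|=\sum_{i\in\Sigma(U)}|\Hom(h,q\circ i)|$, valid for any cover $h$ with codomain $B$. There is always a surjection from the right-hand disjoint union to the left-hand set, sending $e\in\Hom(h,q\circ i)$ to $p\circ i\circ e$, and this uses only the universal property of the pullback and (G3); hypothesis (G5), which makes every arrow of $\Sigma(U)$ monic, is exactly what forces this surjection to be injective, since then the factoring arrow of any morphism through an element of $\Sigma(U)$ is unique. Granting the identity, taking $h=g_0$ gives $|\Hom(f\circ g_0,f)|=\sum_i\deg(q\circ i)=\deg f$ by Lemma~\ref{lem:pullback}; and for any cover $e$ with $|\Hom(e,f)|=\deg f$, writing $e=f\circ h$ and comparing the identity with $\sum_i\deg(q\circ i)=\deg f$ and the bounds $|\Hom(h,q\circ i)|\le\deg(q\circ i)$ of Theorem~\ref{thm:homset} forces $|\Hom(h,q\circ i)|=\deg(q\circ i)$, hence $|\Hom(h,p\circ i)|=\deg(p\circ i)$, for all $i$; Lemma~\ref{lem:loopinv3} applied with $k=h$ then gives $|\Hom(h,g_0)|=\deg g_0$ at termination, i.e. $g_0\sqsubseteq h$, so $f\circ g_0\sqsubseteq e$. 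Thus $f\circ g_0$ is a least cover with respect to $|\Hom(\,\cdot\,,f)|=\deg f$, hence a Galois closure of $f$ by Theorem~\ref{thm:existgc}(iii); combined with (i) this proves correctness under (A1)--(A3) and (G5).

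The two steps I expect to be delicate are, first, the swap-automorphism bookkeeping that upgrades ``$g_0$ dominates every $p\circ i$'' to ``$g_0$ dominates every $q\circ i$'' (needed because step~(3) feeds the $p\circ i$ into the recursion while the hom-set count naturally involves the $q\circ i$), and, above all, the bijectivity half of the displayed identity: the surjection is easy, but showing it is injective — equivalently, that each morphism factors through an element of $\Sigma(U)$ in only one way — is the single point at which monicity of the $\Sigma$-arrows, i.e. (G5), is truly used. Everything else is a reuse of the lemmas already developed for Algorithm $\mathds I$ (Lemmas~\ref{lem:indmap}, \ref{lem:bound}, \ref{lem:loopinv3}) together with Theorems~\ref{thm:homset}, \ref{thm:suffgalois} and \ref{thm:existgc}.
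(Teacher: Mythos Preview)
Your proof is correct and follows the same high-level strategy as the paper: induction on $\deg f$, then verification of condition~(iii) in Theorem~\ref{thm:existgc} by checking both $|\Hom(f\circ g_0,f)|=\deg f$ and leastness, with (G5) entering exactly at the injectivity of the map from $\coprod_i\Hom(\cdot,\text{component})$ to $\Hom(f\circ\cdot,f)$.

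The main tactical difference is your swap automorphism $\sigma$. It works (and the factoring isomorphisms can in fact be obtained from the degree function alone, without (G5), via $\deg(v\circ u)=1$), but it is an avoidable detour. The paper sidesteps it by orienting the counting map the other way: it sends $(i,k)\in\coprod_i\Hom(g,p\circ i)$ to $q\circ i\circ k\in\Hom(f\circ g,f)$, so the $p$-side data produced by step~(3) feeds directly into the hom-set over $f$ without ever needing to pass to the $q$-side. Had you written your identity as $|\Hom(f\circ h,f)|=\sum_i|\Hom(h,p\circ i)|$ with the map $(i,e)\mapsto q\circ i\circ e$, the swap would disappear and your argument would coincide with the paper's for the first half. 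For the leastness half the paper also takes a shorter path, using Lemma~\ref{lem:indmap2} and Corollary~\ref{cor:homC} to go straight from $|\Hom(h,f)|=\deg f$ to $|\Hom(k,p\circ i)|=\deg(p\circ i)$, rather than passing through your identity and then swapping back. What your packaging buys is a pleasant unification: one bijection handles both halves of the argument, whereas the paper uses two separate devices. What the paper's route buys is economy: no auxiliary automorphism of $U$ and no translation between the two projections.
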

\begin{proof} (i)
Step (1) can be implemented in finite time by (A3).
Step (2) asks for finding a pullback $B\stackrel{p}{\leftarrow} U\stackrel{q}{\rightarrow} B$ of $B\stackrel{f}{\rightarrow}A\stackrel{f}{\leftarrow}B$.
By (A1), $B\stackrel{p}{\leftarrow} U\stackrel{q}{\rightarrow} B$ can be identified in finite time.
In step (3) we have to construct the set $S=\{\mathds R(p\circ i)~|~i\in \Sigma(U)\}$.
By (A2) the set $\Sigma(U)$ can be determined in finite time.
In step (4) we do a while loop until $|S|=1$ and we return an output in step (5). Due to (A3) and step (4-iii) the while loop (4) terminates. Thus it remains to verify that  Algorithm $\mathds R$ terminates for the inputs $p\circ i$ with $i\in \Sigma(U)$.
By Lemma \ref{lem:recursion} we have $\deg (p\circ i)< \deg f$ for all $i\in \Sigma(U)$. By step (1), Algorithm $\mathds R$ terminates when the degree of the input cover is $\leq 2$. Therefore Algorithm $\mathds R$ eventually halts for the inputs $p\circ i$ with $i\in \Sigma(U)$. The termination of Algorithm $\mathds R$ follows.

(ii)
We show by induction on the degree of the input cover $f$ that $\mathds{R}(f)$ is a Galois closure of $f$. As remarked before, if $\deg f\leq 2$, then $\mathds R(f)=f$ is a Galois closure of $f$ by Corollary \ref{cor:deg<=2}. Suppose that $\deg f>2$.
Recall from step (5) that $\mathds R(f)=f\circ g$ where $g$ is the remaining cover in $S$.
We shall invoke Theorem \ref{thm:existgc}(iii) to prove that $f\circ g$ is a Galois closure of $f$. First we show that $|\Hom(f\circ g,f)|=\deg f$. As mentioned before, $\deg (p\circ i)<\deg f$ for all $i\in \Sigma(U)$. By induction hypothesis, for each $i\in \Sigma(U)$ the cover  $\mathds R(p\circ i)$ is a Galois closure of $p\circ i$. By Corollary \ref{cor:char_galois} we have
$$
|\Hom(\mathds R(p\circ i),p\circ i)|=\deg (p\circ i)
\qquad
\hbox{for all $i\in \Sigma(U)$}.
$$
Recall the construction of $g$ from the while loop (4), which implies that
$$
\mathds R(p\circ i)\sqsubseteq g
\qquad \hbox{for all $i\in \Sigma(U)$}.
$$
Applying Lemma \ref{lem:indmap} to $p\circ i\sqsubseteq \mathds R(p\circ i)\sqsubseteq g$ for all $i\in \Sigma(U)$, it follows that
$$
|\Hom(g,p\circ i)|=\deg (p\circ i)
\qquad \hbox{for all $i\in \Sigma(U)$}.
$$
Consider the set
$S=\{(i,k)~|~i\in \Sigma(U) ~\hbox{and}~k\in \Hom(g,p\circ i)\}$.
Applying the above equality we have
$$
|S|=\sum_{i\in \Sigma(U)} |\Hom(g,p\circ i)|
=\sum_{i\in \Sigma(U)} \deg (p\circ i).
$$
The latter sum is equal to $\deg f$ by Lemma~\ref{lem:pullback}. Therefore
$$
|S|=\deg f.
$$
By Theorem~\ref{thm:homset} we have $|\Hom(f\circ g,f)|\leq \deg f$. On the other hand, since $f\circ p=f\circ q$,  the $\deg f$ covers
$$
q\circ i\circ k
\qquad \quad
\hbox{for all $(i,k)\in S$}
$$
are in $\Hom(f\circ g,f)$. To assure $|\Hom(f\circ g,f)|=\deg f$, it remains to show that these covers are distinct. Suppose that $(i,k)$ and $(j,\ell)\in S$ satisfy $q\circ i\circ k = q\circ j\circ \ell$. Call $h$ the common cover.
By the universal property of $B\stackrel{p}{\leftarrow}U\stackrel{q}{\rightarrow}B$ there exists a unique arrow $u:C\to U$ such that the diagram

\begin{table}[H]
\centering
\begin{tikzpicture}[descr/.style={fill=white}]
\matrix(m)[matrix of math nodes,
row sep=2.6em, column sep=2.8em,
text height=1.5ex, text depth=0.25ex]
{
C
&
&
\\
&U
&B
\\
&B
&A\\
};
\path[->,font=\scriptsize,>=angle 90]
(m-2-2) edge node[above] {$q$} (m-2-3)
        edge node[left]  {$p$} (m-3-2)
(m-2-3) edge node[right] {$f$} (m-3-3)
(m-3-2) edge node[below] {$f$} (m-3-3)
(m-1-1) edge[bend left] node[above] {$h$} (m-2-3)
        edge[bend right] node[left] {$g$} (m-3-2);
 \path[->,dashed,font=\scriptsize,>=angle 90]
(m-1-1) edge node[descr] {$u$} (m-2-2);
\end{tikzpicture}
\end{table}
\noindent commutes. Since $k\in \Hom(u,i)$ and $\ell \in \Hom(u,j)$, it follows that $i=j$ by (G3) and then $k=\ell$ by (G5). Therefore $|\Hom(f\circ g,f)|=\deg f$, as claimed.

Next, let $h$ denote a cover with $|\Hom(h,f)|=\deg f$. We show that $f\circ g\sqsubseteq h$.
Fix a cover $k\in \Hom(h,f)$. Applying Lemma \ref{lem:indmap2} we have $|\Hom(h,f)|\leq |\Hom(k,p)|$.
Since $|\Hom(k,p)|\leq \deg p$ by Corollary~\ref{cor:homC} and $\deg p=\deg f$ by (G4-III), it forces that $|\Hom(k,p)|=\deg p$.
By the necessary condition of the equality $|\Hom(k,p)|=\deg p$ given in Corollary~\ref{cor:homC} we have
$$
|\Hom(k,p\circ i)|=\deg (p\circ i)
\qquad \quad
\hbox{for each $i\in \Sigma(U)$}.
$$
Now, we may apply Lemma \ref{lem:loopinv3} to conclude that $|\Hom(k,g)|=\deg g$. In particular $g\sqsubseteq k$ and hence $f\circ g\sqsubseteq f\circ k=h$, as desired. Therefore $f\circ g$ is a Galois closure of $f$ by Theorem \ref{thm:existgc}(iii).
\end{proof}

We end this section with a stronger bound for the degree of a Galois closure
than Theorem \ref{thm:lessthan} under the additional hypothesis (G5).

\begin{lem}\label{lem:all=}
Assume that $f:B\to A$ is a cover and $g:C\to A$ is a Galois cover. Let
$B\stackrel{p}{\leftarrow}U\stackrel{q}{\rightarrow}{C}$ denote a pullback of $B\stackrel{f}{\rightarrow}A\stackrel{g}{\leftarrow}{C}$.
Then
$$
|\Hom(p\circ i,p\circ j)|=\deg (p\circ j)
\qquad
\hbox{for all $i,j\in \Sigma(U)$}.
$$
In particular the following hold:
\begin{enumerate}
\item For all $i\in \Sigma(U)$ the covers $p\circ i$ are Galois.

\item For all $i\in \Sigma(U)$ the covers $p\circ i$ are isomorphic to each other.

\item $\deg (p\circ i)$ divides $\deg g$ for each $i\in \Sigma(U)$.
\end{enumerate}
\end{lem}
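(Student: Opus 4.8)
The plan is to prove the displayed identity $|\Hom(p\circ i,p\circ j)|=\deg(p\circ j)$ for all $i,j\in\Sigma(U)$ and then read off (i)--(iii). Throughout, note that each $p\circ i$ and $q\circ i$ is a cover, being an arrow of $\D$ between objects $\dom i$, $B$ (resp.\ $C$) of $\D$ since $\D$ is full in $\C$, and that $f\circ p\circ i=g\circ q\circ i$ because $B\stackrel{p}{\leftarrow}U\stackrel{q}{\rightarrow}C$ is a pullback. Two ingredients are immediate: Theorem~\ref{thm:homset} gives the upper bound $|\Hom(p\circ i,p\circ j)|\le\deg(p\circ j)$, and Lemma~\ref{lem:pullback}, applied to the pullback read as a pullback of $C\stackrel{g}{\rightarrow}A\stackrel{f}{\leftarrow}B$, gives $\sum_{k\in\Sigma(U)}\deg(p\circ k)=\deg g$. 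So it is enough to establish, for each fixed $i\in\Sigma(U)$, the single aggregate lower bound $\sum_{j\in\Sigma(U)}|\Hom(p\circ i,p\circ j)|\ge\deg g$: writing $a_j=|\Hom(p\circ i,p\circ j)|$ and $b_j=\deg(p\circ j)$ we then have $a_j\le b_j$ for all $j$ and $\sum_j a_j\ge\sum_j b_j$, which forces $a_j=b_j$ for every $j$, and since $i$ was arbitrary the identity follows.

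For the aggregate lower bound I would fix $i\in\Sigma(U)$, put $I=\dom i$, and construct an injection from $\Aut(g)$ into the disjoint union $T:=\bigsqcup_{j\in\Sigma(U)}\Hom(p\circ i,p\circ j)$; since $g$ is Galois, $|\Aut(g)|=\deg g$, and $|T|=\sum_j a_j$, so such an injection delivers exactly the bound wanted. Given $\sigma\in\Aut(g)$, Corollary~\ref{cor:aut} gives $\sigma\in\Hom(g,g)$, i.e.\ $g\circ\sigma=g$, whence $g\circ(\sigma\circ q\circ i)=g\circ q\circ i=f\circ p\circ i$; the universal property of the pullback $U$ then yields a unique arrow $u_\sigma\colon I\to U$ with $p\circ u_\sigma=p\circ i$ and $q\circ u_\sigma=\sigma\circ q\circ i$. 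By (G3), $u_\sigma$ factors through a unique $j(\sigma)\in\Sigma(U)$; choose any $h_\sigma\in\Hom(u_\sigma,j(\sigma))$, so that $u_\sigma=j(\sigma)\circ h_\sigma$ and hence $p\circ j(\sigma)\circ h_\sigma=p\circ i$, i.e.\ $h_\sigma\in\Hom(p\circ i,p\circ j(\sigma))$ and $(j(\sigma),h_\sigma)\in T$. The map $\sigma\mapsto(j(\sigma),h_\sigma)$ is injective: an equality $(j(\sigma),h_\sigma)=(j(\tau),h_\tau)$ in $T$ forces $j(\sigma)=j(\tau)$ and $h_\sigma=h_\tau$, so $u_\sigma=j(\sigma)\circ h_\sigma=j(\tau)\circ h_\tau=u_\tau$, and composing with $q$ gives $\sigma\circ q\circ i=\tau\circ q\circ i$; as $q\circ i$ is an arrow of $\D$ it is epic by (G2-II), so $\sigma=\tau$. (All the sets in play are finite: $\Sigma(U)$ by (G4-II) applied to $f\circ p\in\R$, and each $\Hom(p\circ i,p\circ j)$ by Theorem~\ref{thm:homset}.)

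The three consequences then follow quickly. For (i), take $j=i$ in the identity and invoke Corollary~\ref{cor:aut}: $|\Aut(p\circ i)|=|\Hom(p\circ i,p\circ i)|=\deg(p\circ i)$, so $p\circ i$ is Galois. For (ii), the identity gives $|\Hom(p\circ i,p\circ j)|=\deg(p\circ j)\ge1$ and, by symmetry in $i$ and $j$, also $|\Hom(p\circ j,p\circ i)|\ge1$; thus $p\circ i\sqsubseteq p\circ j$ and $p\circ j\sqsubseteq p\circ i$, i.e.\ $p\circ i$ and $p\circ j$ are isomorphic. For (iii), (ii) together with Corollary~\ref{cor:iso_cover} (or Lemma~\ref{lem:deg<=}) shows that all the degrees $\deg(p\circ k)$, $k\in\Sigma(U)$, equal a common value $d$, whence $\deg g=\sum_{k\in\Sigma(U)}\deg(p\circ k)=|\Sigma(U)|\,d$ and so $d\mid\deg g$.

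The one subtle point is that (G3) only guarantees that $\Hom(u_\sigma,j(\sigma))$ is nonempty, not a singleton (it would be a singleton under the monicity hypothesis (G5), which is not assumed here), so $\sigma\mapsto(j(\sigma),h_\sigma)$ need not be a canonical bijection and the two sides of the identity are not literally matched term by term. This is exactly why the argument is organized around the aggregate inequality and closed off by the squeeze $\deg g\le|T|=\sum_j|\Hom(p\circ i,p\circ j)|\le\sum_j\deg(p\circ j)=\deg g$ rather than by exhibiting an explicit bijection.
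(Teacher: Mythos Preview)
Your proof is correct. The overall strategy coincides with the paper's: establish the squeeze
\[
\deg g \;\le\; \sum_{j\in\Sigma(U)}|\Hom(p\circ i,p\circ j)| \;\le\; \sum_{j\in\Sigma(U)}\deg(p\circ j) \;=\; \deg g,
\]
then conclude term-by-term equality and read off (i)--(iii) exactly as you do. The difference lies in how the lower bound is obtained. The paper routes it through two earlier results: since $g$ is Galois, Corollary~\ref{cor:char_galois} gives $|\Hom(f\circ p\circ i,g)|=\deg g$; then Lemma~\ref{lem:indmap2} converts this into $|\Hom(p\circ i,p)|\ge\deg g$, and the ``moreover'' clause of Corollary~\ref{cor:homC} forces $|\Hom(p\circ i,p\circ j)|=\deg(p\circ j)$ for every $j$. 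You instead build the injection $\Aut(g)\hookrightarrow\bigsqcup_j\Hom(p\circ i,p\circ j)$ by hand via the pullback universal property, bypassing both auxiliary lemmas. Your route is more self-contained and makes the role of $\Aut(g)$ explicit; the paper's route is shorter on the page because the bookkeeping (the pullback lift and the factorization through $\Sigma(U)$) has already been packaged into Lemma~\ref{lem:indmap2} and Corollary~\ref{cor:homC}. Your closing remark about why the argument must be organized as a squeeze rather than a bijection (absent (G5)) is a nice observation that the paper leaves implicit.
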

\begin{proof}
Fix $i:I\to U$ in $\Sigma(U)$.
Since $f\circ p=g\circ q$ it follows that $g\sqsubseteq f\circ p\circ i$. Applying Lemma \ref{lem:indmap2} with $k=p\circ i$ we have
$$
|\Hom(f\circ p\circ i,g)|\leq |\Hom(p\circ i,p)|.
$$
Since $g$ is Galois and by Corollary~\ref{cor:char_galois}, we have
$$
|\Hom(f\circ p\circ i,g)|=\deg g.
$$
By Corollary~\ref{cor:homC} we have $|\Hom(p\circ i,p)|\leq \deg p=\deg g$, where the equality follows from (G4-III). Concluding from the above estimations we obtain that
$$
|\Hom(p\circ i,p)|=\deg p.
$$
Therefore $|\Hom(p\circ i,p\circ j)|=\deg (p\circ j)$ for each $j\in \Sigma(U)$ by Corollary~\ref{cor:homC}. In particular we have $p\circ i\sqsubseteq p\circ j$.

Since $i$ is an arbitrary arrow in $\Sigma(U)$ the statement (ii) holds and (i) is immediate from Corollary \ref{cor:aut}. As a consequence of (ii) and Corollary \ref{cor:iso_cover} we have $\deg (p\circ i)=\deg (p\circ j)$ for all $i,j\in \Sigma(U)$. Combined with Lemma~\ref{lem:pullback} we have
$$
\deg g=|\Sigma(U)|\cdot \deg(p\circ i)
\qquad \quad
\hbox{for any $i\in \Sigma(U)$}.
$$
Therefore (iii) follows.
\end{proof}

\begin{thm}\label{thm:divid}
Under {\rm(G5)}, the degree of a Galois closure of a cover $f$ divides $(\deg f)!$.
\end{thm}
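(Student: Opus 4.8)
The plan is to establish the divisibility for the cover $\mathds R(f)$ produced by Algorithm~$\mathds R$: by Theorem~\ref{thm:algoR}(ii) the cover $\mathds R(f)$ is a Galois closure of $f$ when (G5) holds, and by Theorem~\ref{thm:existgc} all Galois closures of $f$ are isomorphic and hence share the degree $\deg\mathds R(f)$; so it suffices to prove that $\deg\mathds R(f)$ divides $(\deg f)!$. I would prove this by strong induction on $d:=\deg f$. If $d\le 2$ then $\mathds R(f)=f$ and the claim is trivial. Assume $d\ge 3$, and recall from Algorithm~$\mathds R$ that $\mathds R(f)=f\circ g$, where $g$ is the unique cover left in $S$ at the end of the while loop~(4), the loop having started from $S=\{\mathds R(p\circ i)\mid i\in\Sigma(U)\}$ with $B\stackrel{p}{\leftarrow}U\stackrel{q}{\rightarrow}B$ a pullback of $B\stackrel{f}{\rightarrow}A\stackrel{f}{\leftarrow}B$. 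By (G4-I) we have $\deg\mathds R(f)=d\cdot\deg g$, so everything reduces to showing that $\deg g$ divides $(d-1)!$.

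The key estimate is
$$
\deg g\ \Big|\ \prod_{i\in\Sigma(U)}\deg\mathds R(p\circ i).
$$
To prove it I would use that the output of the while loop is independent of the nondeterministic choices made inside it (again by Theorem~\ref{thm:algoR}(ii) together with the uniqueness in Theorem~\ref{thm:existgc}), which lets me run the loop in a convenient order. List the distinct members of the initial $S$ as $g_1,\dots,g_N$; each $g_k=\mathds R(p\circ i_k)$ with $\deg(p\circ i_k)<d$ by Lemma~\ref{lem:recursion}, so by the induction hypothesis $g_k$ is a Galois closure of $p\circ i_k$, and in particular $g_k$ is Galois. Now merge along a chain: put $h_1:=g_1$ and, for $t=1,\dots,N-1$, let $h_{t+1}:=h_t\circ r_t\circ j_t$, where $C_{h_t}\stackrel{r_t}{\leftarrow}V_t\stackrel{s_t}{\rightarrow}C_{g_{t+1}}$ is a pullback of $h_t$ and $g_{t+1}$ over $B$ and $j_t\in\Sigma(V_t)$; then $g=h_N$. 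Applying Lemma~\ref{lem:all=}(iii) to the cover $h_t$ and the \emph{Galois} cover $g_{t+1}$ gives $\deg(r_t\circ j_t)\mid\deg g_{t+1}$, and since $\deg h_{t+1}=\deg h_t\cdot\deg(r_t\circ j_t)$ by (G4-I), a one-line induction on $t$ yields $\deg h_t\mid\prod_{k=1}^{t}\deg g_k$. Taking $t=N$, and noting that $\prod_{k=1}^{N}\deg g_k$ divides $\prod_{i\in\Sigma(U)}\deg\mathds R(p\circ i)$ (the latter product merely repeats some factors), gives the displayed estimate.

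It then remains to combine this with two elementary facts. First, by the induction hypothesis $\deg\mathds R(p\circ i)$ divides $(\deg(p\circ i))!$ for every $i\in\Sigma(U)$, so $\prod_{i}\deg\mathds R(p\circ i)$ divides $\prod_{i}(\deg(p\circ i))!$. Second, by Lemma~\ref{lem:pullback} (with the two projections of the pullback interchanged) one has $\sum_{i\in\Sigma(U)}\deg(p\circ i)=\deg f=d$, while by Corollary~\ref{cor:atleast1} there is an $i_0\in\Sigma(U)$ with $\deg(p\circ i_0)=1$; hence the degrees $\deg(p\circ i)$ with $i\neq i_0$ are positive integers summing to $d-1$, so the multinomial coefficient $(d-1)!\big/\prod_{i\neq i_0}(\deg(p\circ i))!$ is an integer, and therefore $\prod_{i\in\Sigma(U)}(\deg(p\circ i))!=\prod_{i\neq i_0}(\deg(p\circ i))!$ divides $(d-1)!$. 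Chaining the three divisibilities gives $\deg g\mid(d-1)!$, whence $\deg\mathds R(f)=d\cdot\deg g$ divides $d\cdot(d-1)!=d!$, which completes the induction and hence the proof.

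The only delicate point is the chain-merging reduction: at every merge one needs at least one of the two covers being combined to be known to be Galois so that Lemma~\ref{lem:all=}(iii) applies, and the chain scheme is arranged precisely so that the ``fresh'' cover $g_{t+1}$ always supplies this, since the running cover $h_{t+1}$ produced by a merge need not itself be Galois. Everything else is bookkeeping with degrees together with the integrality of the multinomial coefficient.
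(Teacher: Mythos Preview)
Your proof is correct and follows essentially the same route as the paper: induction on $\deg f$ via Algorithm~$\mathds R$, chain-merging the covers $\mathds R(p\circ i)$ using Lemma~\ref{lem:all=}(iii) (always applying it with the fresh Galois cover $\mathds R(p\circ i_{t+1})$ playing the role of the Galois side), and finishing with the multinomial-coefficient divisibility $\prod_i(\deg(p\circ i))!\mid(d-1)!$. The only notable difference is that you explicitly justify reordering the nondeterministic choices in the while loop by appealing to Theorem~\ref{thm:algoR}(ii) and the uniqueness in Theorem~\ref{thm:existgc}, a point the paper leaves implicit.
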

\begin{proof}
We prove by induction on $\deg f$ using Algorithm $\mathds R$. It is true if $\deg f\leq 2$ by Corollary \ref{cor:deg<=2}. Now suppose $\deg f>2$. Let $\Sigma(U)=\{i_1,i_2,\ldots,i_m\}$. According to step (3) of Algorithm $\mathds R$, the set $S$ initially consists of distinct Galois covers $\mathds R(p \circ i_1),\ldots,\mathds R(p \circ i_m)$ with $i_1,\ldots,i_m\in \Sigma(U)$. When $m>1$, in the while loop (4) we are to successively replace two covers in $S$ by a new cover and repeat this $m-1$ times until only one cover remains. We can do this in the following way. Define $g_0 = \mathds R(p \circ i_1)$. In the $j$th loop for $1 \le j \le m-1$, at (i) we choose the two covers $g_{j-1}$ and $\mathds R(p \circ i_{j+1})$ and at (iii) replace them by a new cover denoted by $g_j$. By Lemma \ref{lem:all=}(iii), in $j$th loop for all $1\leq j\leq m-1$,
we have
$\deg g_j$ divides
$\deg g_{j-1} \cdot \deg \mathds R(p \circ i_{j+1})$.
A routine induction yields that $\deg g_m$ divides
$$
\prod_{i\in \Sigma(U)} \deg \mathds R(p\circ i).
$$
By (G4-I) the degree of $\mathds R(f)=f\circ g_m$ is equal to $\deg f\cdot \deg g_m$ which divides
$$
\deg f\cdot \prod_{i\in \Sigma(U)} \deg (p\circ i)!
$$
by induction hypothesis.
It remains to prove that the second multiplicand divides $(\deg f-1)!$. Recall from Corollary \ref{cor:atleast1} that $\Sigma(U)$ contains at least one arrow $i$ such that $\deg (p\circ i)=1$. Hence
$$
\sum_{i\in \Sigma(U)\atop
\deg (p\circ i)>1} \deg (p\circ i)\leq \deg f-1
$$
by Lemma~\ref{lem:pullback}. It follows that
$$
\prod_{i\in \Sigma(U)}\deg (p\circ i)!
=
\prod_{i\in \Sigma(U)\atop
\deg (p\circ i)>1} \deg (p\circ i)!
$$
is a factor of $(\deg f-1)!$. This completes the proof of the theorem.
\end{proof}

\section{Remarks on ramified covers}

The concrete examples, Examples \ref{ex:graph}--\ref{ex:variety}, to which our categorical approach apply all concern finite unramified covers. In arithmetic geometry finite ramified covers have been vastly studied in the literature, see, for example, \cite{FI2002} and the references therein; they are closely tied to the celebrated Inverse Galois Problem. It is natural to ask whether there exists a unified categorical approach to finding the Galois closure of a finite ramified cover. 

We point out that, given a finite cover $f : B \rightarrow A$ of two irreducible normal projective varieties defined over a field $k$, with or without ramifications, the two algorithms given in \S4 can be applied to find a cover $g : C \rightarrow A$ of normal irreducible projective varieties defined over $k$ such that $g$ is a Galois closure of $f$. To see this,  it suffices to construct categories $\C$ and $\D$ both containing the arrow $f$ such that  properties (G1)-(G5) hold. The objects in $\C$ are normal projective varieties $V$ defined over $k$ which are finite covers of $A$. Denote by $k(V)$ the algebra of $k$-rational functions on $V$. An arrow $r:V \rightarrow W$ between two objects $V$ and $W$ in $\C$ is a finite morphism over $k$ which induces a nontrivial $k$-algebra homomorphism $r^*:k(W)\to k(V)$. In particular, when $V$ and $W$ are irreducible, $k(V)$ and $k(W)$ are fields and $r^*$ embeds $k(W)$ in $k(V)$, allowing $k(W)$ to be regarded as a subfield of $k(V)$. This will be used below repeatedly. The category $\D$, whose objects are those in $\C$ which are irreducible, is a full subcategory of $\C$. To verify (G1)--(G5), the argument is similar to the case of \'etale covers discussed in Example 2.4 except we have to make sure that the varieties are all normal.

{We start with (G1). Given $ Y \stackrel{r}{\rightarrow} X \stackrel{s}{\leftarrow} Z$ in $\D$, a pullback is described as follows. 
Decompose the tensor product $k(Y)\otimes_{k(X)} k(Z)$ into the direct sum of (finitely many) fields $K_j$, each being a finite extension of $k(Y)$ and $k(Z)$. Since $X$, $Y$ and $Z$ are normal by assumption,  $r:Y\to X$ and $s:Z\to X$ are the normalizations of $X$ in $k(Y)$ and $k(Z)$, respectively. For each $j$ the normalization $t_j:V_j\to X$ of $X$ in $K_j$ factors through $r$ and $s$ via the normalizations $p_j:V_j\to Y$  and $q_j:V_j\to Z$ of $Y$ and $Z$ in $K_j$, respectively. A pullback of  $ Y \stackrel{r}{\rightarrow} X \stackrel{s}{\leftarrow} Z$ is
$$
Y\stackrel{p}{\leftarrow} V\stackrel{q}{\rightarrow} Z
$$ 
where $V$ is the normal variety over $k$ whose irreducible components are the $V_j$'s and $p$ and $q$ are the covers such that $p|_{V_j}=p_j$ and $q|_{V_j}=q_j$ for each $j$.} Equivalently, this amounts to saying that we take the geometric pullback of $Y \times_X Z$ and replace each irreducible component $V_j'$ by its normalization $V_j$ in its function field so that we obtain an object in $\C$.

For (G2) it remains to describe a pushout of the given diagram $Y \stackrel{r}{\leftarrow} X \stackrel{s}{\rightarrow} Z$ in $\D$. As explained above, $k(Y)$ and $k(Z)$ are embedded in $k(X)$ as subfields via $r^*$ and $s^*$, respectively; denote by $F$ the intersection  $k(Y) \cap k(Z)$. The normal variety $X$ is covered by finitely many affine Spec$A_i$, with $A_i$ integrally closed in $k(X)$, which is also the quotient field of $A_i$. Then each $F_i = A_i \cap F$ is integrally closed in $F$, and by gluing Spec$F_i$ together we obtain a normal variety $V$ over $k$ whose function field is $F$. A pushout of $Y \stackrel{r}{\leftarrow} X \stackrel{s}{\rightarrow} Z$ is 
$$
Y \stackrel{p}{\rightarrow} V \stackrel{q}{\leftarrow} Z
$$ 
where $p$ and $q$ are the normalizations of $V$ in $k(Y)$ and $k(Z)$, respectively. Given an object $U$ of $\C$, each of its irreducible component $U_i$ is an object in $\D$, the set $\Sigma(U)$ is defined the same way as before so that (G3) holds. The degree function of an arrow $f : X \rightarrow Y$ in $\D$ is the degree of the function field extension $[k(X) : k(Y)]$ for irreducible normal varieties $X$ and $Y$.
It clearly satisfies (G4). Finally condition (G5) obviously is valid.

 Grothendieck defined a cover of varieties to be a finite flat morphism. Note that taking fiber product followed by normalization may not preserve flatness. A normal curve is automatically smooth, hence the finite morphisms between normal curves are flat. Thus if $f : B \rightarrow A$ is a finite cover between smooth curves, our algorithms output a smooth curve $C$ over $k$ so that the cover $g: C \rightarrow A$ is a Galois closure of $f$ in the sense of Grothendieck. On the other hand, if $A$ has dimension at least $2$, so do all objects in the category $\C$. The singular locus contained in an object of $\C$ is known to have codimension at least $2$.  The output cover $g : C \rightarrow A$ from our algorithms is flat outside the singular loci in $C$ and $A$. It is in this sense that $g$ is a Galois closure of $f$. If the cover $f: B \rightarrow A$ to begin with is flat between smooth varieties, there is no canonical way to choose a smooth variety $C'$ with the same function field as $C$ which gives rise to a Galois closure $g' : C' \rightarrow A$ of $f$ in the sense of Grothendieck.

A more sophiscated type of cover of varieties together with a permutation representation is discussed in \cite[\S 3.1.3]{fried:1999}, where a geometric method to find its Galois closure using fiber product is outlined. It would be interesting to see whether this approach can be described in terms of categorical language as well.

\bigskip

{\bf Acknowledgment.} The authors would like to thank Michael Fried for bringing to their attention that categorical approach to finding Galois closures for finite (not necessarily \'etale) covers of normal varieties has been used by people working on Galois covers and moduli spaces to obtain deep results in arithmetic geometry. The first papers in this field are \cite{Fried:91, Fried:92}, where the categorical construction was applied  to relate two different versions of Hurwitz spaces. The authors would like to express their deep gratitude to the anonymous referee for  inspirational discussions, clarifications, and the encouragement to include covers with ramifications.

\bibliographystyle{alpha}
\bibliography{GC}

\end{document}